\renewrobustcmd*{\bibinitdelim}{\,}
\theoremstyle{plain}
\newtheorem{theorem}{Theorem}[section]
\newtheorem{lemma}[theorem]{Lemma}
\newtheorem{proposition}[theorem]{Proposition}
\theoremstyle{definition}
\newtheorem{definition}[theorem]{Definition}
\newtheorem{assumption}[theorem]{Assumption}
\newtheorem{remark}[theorem]{Remark}
\numberwithin{equation}{section}
\numberwithin{figure}{section}
\newcommand{\bF}{\mathbb{F}}
\newcommand{\bN}{\mathbb{N}}
\newcommand{\bP}{\mathbb{P}}
\newcommand{\bR}{\mathbb{R}}
\newcommand{\bS}{\mathbb{S}}
\newcommand{\cC}{\mathcal{C}}
\newcommand{\cE}{\mathcal{E}}
\newcommand{\cF}{\mathcal{F}}
\newcommand{\cP}{\mathcal{P}}
\newcommand{\cX}{\mathcal{X}}
\newcommand{\cY}{\mathcal{Y}}
\newcommand{\E}{\mathbb{E}}
\renewcommand{\d}{\textnormal{d}}
\title{
	{\textbf{Rearranged Stochastic Heat Equation: \\
	Ergodicity and Related Gradient Descent on ${\mathcal P}({\mathbb R})$}}\\
}
\author{François Delarue and William R.P. Hammersley \thanks{F. Delarue and W. Hammersley are supported by the European Research Council (ERC) under the European Union’s Horizon 2020 research and innovation programme (ELISA project, Grant agreement No. 101054746). Email: francois.delarue@univ-cotedazur.fr, william.hammersley@univ-cotedazur.fr} \\ Université Côte d'Azur, CNRS, Laboratoire J.A. Dieudonné}
\date{\today}
\begin{document}
	\maketitle
	
	\begin{abstract}
		This article provides a case study for a recently introduced diffusion in the space of probability measures over the reals, namely rearranged stochastic heat, which solves a stochastic partial differential equation valued in the set of symmetrised quantile functions over the unit circle. This contribution studies probability measure-valued flows perturbed by this noise with a special focus on gradient flows. This is done by introducing a drift to the rearranged stochastic heat equation by means of a vector field from the set of random variables over the unit circle into itself. When the flow is a gradient flow, the vector field may coincide with the Wasserstein derivative of a mean-field potential function. The resulting equation reads as a sort of McKean-Vlasov stochastic differential equation with an infinite dimensional common noise. This article provides conditions on the drift under which solutions exist uniquely for any time horizon and converge exponentially fast towards a unique equilibrium. When the drift derives from a potential, some metastability properties are obtained as the intensity of the noise is tuned to zero: it is shown that under a particular scaling regime, the gradient descent lingers near local minimizers for expected times of the same order as in the finite dimensional setting.
		\medskip
		
		\noindent \textbf{Keywords}: {Measure-valued Diffusions, Wasserstein Diffusions, Reflected SPDE, Invariant Measure, Exponential Convergence, Metastability.}
\vspace{2pt} 

\noindent \textbf{AMS Classification}: {60H15, 60G57, 37L40}.
	\end{abstract}
     
\section*{Introduction}


This article provides a case study for a nascent approach to regularisation by noise for mean field models as introduced by the authors in \cite{delarueHammersley2022rshe}, where a diffusion in the space ${\mathcal P}_2({\mathbb R})$ of probability measures on the real line with finite second moment is found via the solution to what we call the rearranged stochatic heat equation. This is the one dimensional stochastic heat equation with coloured noise constrained within the set of symmetric non-increasing functions over the unit circle $\bS$. 
When equipped with the $L^2$-norm, this space of symmetric non-increasing functions has the key feature to be isometric to ${\mathcal P}_2({\mathbb R})$ equipped with the $2^{nd}$-Wasserstein distance ${\mathcal W}_2$. In light of this correspondence, our approach can be viewed as randomising over the space of quantile functions. For more details see Section \ref{sec:rearrangement} and \cite{delarueHammersley2022rshe}. 
\vskip 4pt

\noindent \textit{Gradient descents.}
As an important step in the implementation of this heuristic, gradient descents in ${\mathcal P}_2({\mathbb R})$ are considered in this text. The analysis is effectuated at the level of a formal infinite particle (mean field) limit. Following the technique of \emph{lifting} from Lions \cite{lions2007-8lectures}, one rewrites the gradient dynamics over $L^2(\bS,\text{Leb}_{\bS})$ and considers the rearranged stochastic heat equation with drift given by the lifted gradient flow ($\text{Leb}_{\bS}$ denotes the Lebesgue measure on $\mathbb S$). 
To illustrate this approach, consider a minimisation problem for a potential $V$ written over the space of square integrable probability measures on the real line, 
\begin{equation}
	\notag
	\min_{\mu\in\cP_2(\mathbb{R})}\{V(\mu)\}.
\end{equation} 
Cognisant of the mean field perspective, one could hope to achieve minimisation of the above potential by considering the law of a McKean-Vlasov type gradient descent on a probability space $(\Omega,\cF,\bP)$,
\begin{equation}
	\label{eq:MKV:1}
	\tfrac{\d}{\d t} X_t(\omega)= -\partial_\mu V(\mu_t,X_t(\omega)),\quad \mu_t:=\bP\circ X_t^{-1},
\end{equation}
where $\partial_\mu V$ is here the Wasserstein derivative of $V$ (see for instance 
\cite[Chapter 5]{CarmonaDelarueI}). 
Given the issues of well-posedness of the above dynamics in general (amongst other motivations), two popular stochastic versions of the above have been well-studied in recent years, namely the idiosyncratic (private) and systemic (common) noise models arising from corresponding finite size mean field interacting particle systems. 
\vskip 4pt

\noindent \textit{Private vs. common noise.}
In the private noise model, 
\eqref{eq:MKV:1}
is subjected to additional noise, usually Brownian motion, defined on the same space 
$\Omega$ as the one upon which the law of $X_t$ is taken. In short, this corresponds to the asymptotic regime of mean field interacting particle systems where each particle is driven by its own independent (hence private) noise. We refer to Sznitman \cite{sznitman1991topics} for an introduction to this topic. Via a mechanism known as the propagation of chaos, the instantaneous dynamics of each particle in the asymptotic infinite particle system are stochastically independent yet statistically identical; they depend on the selfsame flow of induced marginal laws $(\mu_t)_{t \geq 0}$, which itself solves a deterministic non-linear Fokker-Planck-Kolomogorov equation. 

In the common noise setting, the common noise may be defined on a larger probability space
 and $\mu_t$ defined simply as the law of $X_t$ given the inputs of the noise that are defined outside the original probability space. Intuitively, this means that in the corresponding mean field interacting particle systems, independence of the driving noises is relaxed to exchangeability. Evidently, there is a wealth of options for correlating the noises in an exchangeable manner. {We quote just a few of the examples that have been amenable to study: see the  works by Dawson and Vaillancourt \cite{dawsonVaillancourt1995,vaillancourt1988} for the foundations, 
the recent papers \cite{ANGELI2023127301,hammersleySiskaSzpruch2021cnMKVwk}
for more specific properties of McKean-Vlasov equations with common noise, 
the contributions 
 \cite{CDLL,CarmonaDelarueII,carmona2015mfgcn,kolokoltsovTroeva2019mfgcn}
 for concrete illustrations in  
mean field game theory, 
\cite{ledgerSojmark2021}
for an excitatory model with common noise inspired from neurosciences or credit-risk management, 
and also
the articles
\cite{coghiDeuschelFrizMaurelli2020pathwiseMKVwithAddNoise,COGHI20211}
for relations with 
rough paths theory.}   

As mentioned earlier, the idiosyncratic noise model introduces a Brownian motion to the above McKean-Vlasov type dynamics, and consequently a Laplacian to the corresponding (hence second-order) Fokker-Planck-Kolmogorov equation. The resulting second-order 
Fokker-Planck-Kolmogorov equation
is usually regarded as a gradient descent (on the space of probability measures) deriving 
from a new potential, equal to the sum of the potential $V$ plus an entropy penalty. 
{See the seminal works 
\cite{Ambrosio,mccann,JKO,Otto1,,Otto2} on gradient flows; 
see also 
\cite{chizat2022mfLD,huRenSiskaSzpruch2021gradFlowMfLangevinNN,meiMontanariNguyen2018MFNN,nitandaWu2022convAnalysisMFLD}
for the connection 
with statistical learning 
and 
\cite{claisseConfortiRenWang2023mfOptRegByFisherInfo} for a variant based on the Fisher information instead of the entropy.} Whilst this new gradient descent is regularised by the presence of the Laplacian, it remains deterministic. Consequently, long time convergence to a global minimiser of the new entropy-regularised potential may fail outside of the convex setting: {see for instance 
\cite{dawson1983CriticalDA}}.

As for the case with common noise, many of the existing examples that have been addressed so far are limited to finite dimensional common noises whose structure has been demonstrated to be insufficient for smoothing out the dynamics in a relevant manner {and for ensuring a suitable form of ergodicity.
See however the recent contributions 
\cite{ANGELI2023127301,maillet2023note} for long time convergence results in presence of a finite dimensional common noise under a specific mean field interaction.} 
\vskip 4pt 
\noindent \textit{Infinite dimensional common noise.} 
Instead, it is natural to suppose that a regularising and explorative noise with satisfactory generalisability should randomise the dynamics over the ambient space on which the minimisation problem is posed. However, how to replace a finite dimensional Brownian motion, with a counterpart on the space of probability  measures? Candidates for this rôle are often referred to as Wasserstein diffusions. {Typically,}
these are processes that are coherent with the Riemannian-type structure on the space $\cP_2({\mathbb R})$ endowed with the $2^{nd}$ Wasserstein distance $W_2$. {Key examples are those of von Renesse and Sturm \cite{vRenesseSturm}, Konarovskyi and von Renesse \cite{KonarovskyivRenesse} and Dello Schiavo \cite{DelloSchiavo0}. Although associated with a different geometry, 
Fleming-Viot processes also exhibit interesting properties}, 
 see Stannat \cite{Stannat}.

Following the approach developed in 
\cite{delarueHammersley2022rshe}, {which differs from the constructions in the aforementioned references, our starting point is to 
directly insert an infinite dimensional noise in the McKean dynamics 	\eqref{eq:MKV:1}}. To illustrate the thrust of this idea, one begins by choosing the underlying probability space to be the unit circle ${\mathbb S}$, equipped with the Lebesgue measure, denoted 
$\textrm{\rm Leb}_{\mathbb S}$. Thus,
\eqref{eq:MKV:1}
becomes
\begin{equation}
	\label{eq:MKV:2}
	\tfrac{\d}{\d t} X_t(x)= -\partial_\mu V\bigl(\text{Leb}_{\mathbb S} \circ X_t^{-1},X_t(x)\bigr),\quad x\in\bS.
\end{equation}
Intuitively, the next step is to perturb the resulting gradient flow by an Ornstein-{Uhlenbeck} process with values in 
$L^2({\mathbb S})$, namely the stochastic heat equation driven by a coloured noise. As explained in 
\cite{delarueHammersley2022rshe}, this approach would result in a non-Markovian evolution for the distribution of solutions. Critically, two representatives $X_0 : {\mathbb S} \rightarrow {\mathbb R}$
and $X_0' : {\mathbb S} \rightarrow {\mathbb R}$ of the same distribution 
$\mu_0 \in {\mathcal P}_2({\mathbb R})$ (i.e., $\textrm{\rm Leb} \circ X_0^{-1}=
\textrm{\rm Leb} \circ (X_0')^{-1} = \mu_0$) would lead to two different flows 
in ${\mathcal P}_2({\mathbb R})$. 
	
	To overcome this drawback, the key idea in \cite{delarueHammersley2022rshe}
	is to constrain the stochastic heat equation to be valued in a specific subset of functions in one-to-one correspondence with the space of one dimensional probability distributions. Those functions 
	are called symmetric non-decreasing functions on the circle. They should 
	be regarded as `quantile functions' defined on ${\mathbb S}$, as opposed to $[0,1]$, the periodic structure of the circle here providing a more robust framework.
	One of the main results of  \cite{delarueHammersley2022rshe} is to prove that the constraint 
	put on the stochastic heat equation can be rigorously formalised in terms of a suitable reflection term. 
	Transposing this to \eqref{eq:MKV:2}, this leads us to consider
\begin{equation}
\label{eq:MKV:3}
\d X_t(x)= -\partial_\mu V\bigl(\text{Leb}_{\mathbb S}\circ X_t^{-1},X_t(x)\bigr) \d t
+ \Delta_x X_t(x) \d t+ \d W_t(x)+ \d \eta_t(x), \ \ x \in {\mathbb S}, \ t \geq 0,
\end{equation}
with $\eta$ playing the rôle of a reflection term and $W$ being {a coloured} Brownian motion with values in 
$L^2({\mathbb S})$, see \cite{daPratoZabczyk2014stochEqnsInfDim}. 
When expressed in the Fourier basis, 
the {covariance of $W$ is} diagonal. 
Its form is clarified in Subsection \ref{sec defRSHE}. Briefly put, the eigenvalues are assumed to decay at a certain rate with the Fourier modes. 
 
\vskip 6pt

\noindent \textit{Contributions of the paper.}
As the first contribution of this article, weak solutions to the \emph{drifted} version 
\begin{equation}
\label{eq:MKV:3:bb}
\d X_t(x)= F\bigl(X_t(\cdot)\bigr)(x) \d t
+ \Delta_x X_t(x) \d t+ \d W_t(x)+ \d \eta_t(x), \ \ x \in {\mathbb S}, \ t \geq 0,
\end{equation}
\color{black}
of the rearranged stochastic heat equation are constructed via Girsanov transformations in Theorem \ref{thm eu w drift}. Uniqueness in law is demonstrated as a corollary to the strong --pathwise-- solvability properties of the \emph{undrifted} version originally introduced in 
\cite{delarueHammersley2022rshe}. The existence and uniqueness of solutions holds under modest assumptions of {finiteness of} all polynomial moments of the $L^2$ norm of
{the} initial `symmetrised quantile' (Assumption \ref{ass init l2}), allowing for a random initialisation and a symmetric - but not necessarily non-increasing - drift $F$ (Assumption \ref{ass drift form}) 
{mapping $L^2$ into itself }and satisfying linear growth in the $L^2$ norm for both itself and its spatial $x$-derivative  (Assumption \ref{ass drift eu}).
{Notably, this covers a wide class of `mean field' drifts of the form $F(\textrm{\rm Leb}_{\mathbb S} \circ X_t^{-1},X_t(x))$ that may not be continuous in the measure argument, 
hence entailing a phenomenon of restoration of weak uniqueness under the action of $W$.}

The second contribution is to demonstrate the exponential convergence to equilibrium for this system in Theorem \ref{thm expStab}. This requires some addition assumptions. We first define a condition (Assumption \ref{ass drift erg}) that is tantamount to having the squared $L^2$ norm satisfy a Lyapunov-type condition with negative prefactor. The final additional assumption (Assumption \ref{ass drift bounded oscillation}) is that of bounded oscillations at bounded distance for the drift; a sufficient condition for this to hold is Lipschitz continuity of the drift \emph{outside} of a ball. 
Notably, exponential convergence holds with any minimal threshold required on the intensity of the noise (with respect to the magnitude of the drift) nor any Lipschitz/monotone structure imposed on the drift. This makes a stark contrast with most of the existing state of the art in the long-time analysis of 
(standard) McKean-Vlasov equations\footnote{{In addition to the aforementioned references on the long-time behaviour of second-order Fokker-Planck equations 
deriving from an interaction potential plus an entropy penalty,} 
see \cite{ahmedDing1993invMeasNLMP,
benachourRoynetteTalayVallois1998,
veretennikov2006ergMKV} for earlier works and \cite{bogachevRocknerShaposhnikov2019convStnryNLFPK,
bogachevRocknerShaposhnikovStationary2019,
butkovsky2014ergPropNLMCandMKV,
eberleGuillinZimmer2016quantHarris4diffnsAndMKV,
guillinLeBrisMonmarche2022convRateMKVnonCon,guillinMonmarche2020UniformLongTimeANDpoCforMKVkineticNon-convex} for some recent developments regarding convergence to equilibrium in various mean field contexts.
 We refer also to \cite{baoScheutzowYuan2022existInvMeasFctnlMKV,
duongTugaut2016stationaryMKVexistCharPhaseTrans,
hammersleySiskaSzpruch2021lyapunov} for existence results or examples of (potentially non-unique) equilibria for non-monotone models.} and this strongly advocates for the relevance of the dynamics introduced here (on the basis of \cite{delarueHammersley2022rshe}). 
{Our proof mainly relies on a coupling argument 
that makes it possible to apply Harris' theorem, see Theorem
\ref{thm Harris}. In this regard, it is purely probabilistic and differs from 
arguments based on functional estimates like Poincaré or logarithmic-Sobolev inequalities, as used 
in
 \cite{doringStannat2009logSobolevForWassDiff}
in the long-time analysis of the Wasserstein diffusion of von Renesse and Sturm and in \cite{Stannat,
stannat2000onValidityLogSobSymFlemViot} for results in the context of the Fleming-Viot processes.}

The third contribution is to address the stochastically perturbed gradient descent 
\eqref{eq:MKV:3}, {which is a particular case of \eqref{eq:MKV:3:bb}}. 
{A natural question is to} 
wonder about the properties
of the invariant measure in connection with the original shape of the potential $V$. As explained in Section 
\ref{sec SGD}, the invariant measure cannot be identified as Gibbsian. The colouring and constraint of the dynamics break down the symmetry properties that would be necessary to characterise the invariant measure further. We are however able to provide metastability properties of the stochastic gradient descent when the noise is sent to zero in a proper fashion. 
Part of the challenge here is to identify just how the noise must be diminished to obtain some interesting metastability properties. 
To do just this, we add a multiplicative intensity factor $\varepsilon$ in front of the noise $W$ in \eqref{eq:MKV:3},  
rescale the Laplace friction term by $\varepsilon^2$ and then require the {first eigenvalues of the covariance matrix of $W$, up to a rank of order $\varepsilon^{-1}$, to be of order 1} (so that the noise becomes almost white). In this regime, 
we prove that the stochastic gradient descent spends times of order $\exp(1/\varepsilon^2)$ 
at the bottom of wells of the potential $V$, see Theorem 
\ref{thm:meta}, which follows the traditional folklore on stochastic gradient descents in finite dimension. 
{The estimate is established under a suitable condition on the shape of the potential's well: typically, it is satisfied if the potential is assumed to be 
locally strictly \textit{displacement} convex, i.e.,
strictly convex  
with respect to 
the $2^{nd}$-Wasserstein distance) and is sufficiently regular. Although seemingly natural, this assumption requires however a modicum of care.  
We indeed prove below the following 
surprising result:
functions that are locally 
strictly convex and continuously differentiable in Lions' sense feature a strong form of rigidity; in short, the minima of such strictly convex wells (which are local minima on the whole space) 
are necessarily attained at Dirac masses. This property, which has no analogue in the Hilbertian setting (on a Hilbert space, the squared distance provides a trivial example of  
a strictly convex smooth function `centred' around any arbitrary vector), is in fact inherited, in the specific framework of ${\mathcal P}_2({\mathbb R})$, 
from the Jensen inequality. Meanwhile, it 
advocates for a weaker notion differentiability. Consistently, we just require the lift of the potential $V$ to be differentiable 
\emph{inside} the sole cone of quantile functions, which is weaker than asking it to be differentiable on the whole space of square-integrable random variables (as in the case of the Lions derivative). This weakening of the notion of derivative allows us to treat, in particular, potentials that coincide locally with the square of the $2^{nd}$-Wasserstein distance, $W_2(\cdot,\nu)$ for arbitrary $\nu\in\cP_2(\mathbb R)$. 
It also serves to further justify the rearranged model that we are exploring in this work.}
\vskip 6pt
  
\noindent \textit{Prospects.}
Our work should be considered as a proof of concept,
demonstrating the advantages to be gained from randomising McKean dynamics, in particular those deriving from a potential, by means of an appropriate infinite-dimensional (common) noise.
We believe this opens up promising research directions, which we now briefly highlight.

{In fact, our long-term project is to show that the exploratory properties of a common noise like the one used here can be of real interest in the study of mean-field minimisation problems, both algorithmically and numerically. In this respect, one of our objectives is to understand whether common noise can offer a relevant alternative to entropy regularization methods, some of which are limited in scope as we pointed out above. Naturally, an ambitious perspective would be to revisit mean-field approaches to artificial neural networks
(e.g., \cite{DBLP:conf/nips/ChizatB18,meiMontanariNguyen2018MFNN}) in such a context. Of course, much remains to be done before such applications can be envisaged in concrete terms. On a theoretical level, it is clear that the question becomes relevant in the higher dimensional framework. Any attempt to generalise to the $d \geq 2$ case requires an appropriate adaptation of the (non-drifted) rearranged stochastic heat equation, which we plan to study in the future. Moreover, the same question but for McKean-Vlasov models subjected to a private noise looks to be another very interesting though challenging problem.
For the $1d$ model addressed here, the metastability properties stated in Theorem 
\ref{thm:meta} {indicate} that our stochastic gradient descent does indeed possess exploration properties consistent with the landscape drawn by the potential. {Consequently,} a large deviation analysis for the same small intensity regime of the stochastic gradient descent would be highly valuable. In particular, analysis of the appropriate quasi-potential of Friedlin and Wentzell is a very natural prospective.} 
 \vskip 6pt
\color{black}

\noindent \textit{Organisation.}
The article is structured as follows. Section \ref{sec prelim} provides definitions and a short introduction to the rearranged stochastic heat equation. Section \ref{sec exist} establishes the existence and uniqueness of the drifted rearranged stochastic heat equation in the probabilistically weak sense. In Section \ref{sec conv}, we prove the exponential convergence to equilibrium for the system under study. 
In Section \ref{sec SGD}, we 
focus on some of the specific properties of the stochastic gradient descent. 
The metastability properties of the latter in the vanishing viscosity regime are addressed in Section \ref{sec ito}.

\section{Preliminary Material}
\label{sec:rearrangement}
In this section, some definitions and related results are recalled. For more details, see \cite{delarueHammersley2022rshe} and the cited literature. To begin, the circle {${\mathbb S}$} is parametrised by the interval $(-1/2,1/2]$ and with $0$ viewed as a privileged fixed point, {i.e., ${\mathbb S}:=({\mathbb R}+1/2)/{\mathbb Z}$}. 

\begin{proposition}
	\label{prop:rearrangement:def}
	Given a measurable function $f : \bS \rightarrow {\mathbb R}$, there exists a unique function, called the symmetric non-increasing rearrangement of $f$ and denoted $f^* : \bS \rightarrow [-\infty,+\infty]$, satisfying the following two properties:
	\begin{enumerate}
		\item $f^*$ is symmetric (with respect to $0$), is non-increasing and right-continuous on the interval $[0,1/2)$,
		and is left-continuous at $1/2$ (left- and right-continuity being understood for the topology on $[-\infty,+\infty]$),
		\item the image of the Lebesgue measure $\textrm{\rm Leb}_{{\mathbb S}}$
		by $f^*$ is the same as that by $f$, namely
		\begin{equation*}
			\forall a \in {\mathbb R}, \quad \textrm{\rm Leb}_{{\mathbb S}}\Bigl(\bigl\{ x \in {\mathbb S} : f^*(x) \leq a \bigr\} \Bigr) 
			= 
			\textrm{\rm Leb}_{{\mathbb S}}\Bigl(\bigl\{ x \in {\mathbb S} : f(x) \leq a \bigr\} \Bigr).
		\end{equation*} 
	\end{enumerate}
\end{proposition}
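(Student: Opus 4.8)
The plan is to reduce the claim to the classical correspondence between a probability measure on ${\mathbb R}$ and its quantile function, transported to the circle by an elementary ``folding''. Since $\textrm{\rm Leb}_{{\mathbb S}}$ is a probability measure and $f$ takes values in ${\mathbb R}$, the image $\mu:=\textrm{\rm Leb}_{{\mathbb S}}\circ f^{-1}$ is a probability measure on ${\mathbb R}$; write $F(a):=\textrm{\rm Leb}_{{\mathbb S}}(\{f\le a\})$ for its cumulative distribution function (non-decreasing, right-continuous) and $Q(u):=\inf\{a\in{\mathbb R}:F(a)\ge u\}$, $u\in(0,1]$, for the associated quantile, which is non-decreasing, \emph{left}-continuous, finite on $(0,1)$, and satisfies the duality relation $Q(u)\le a\iff u\le F(a)$. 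Introduce the even map $\phi:{\mathbb S}\to[0,1]$, $\phi(x):=1-2|x|$: it is continuous, strictly decreasing on $[0,1/2]$, equals $1$ only at $x=0$ and $0$ only at $x=1/2$, and pushes $\textrm{\rm Leb}_{{\mathbb S}}$ forward onto $\textrm{\rm Leb}_{[0,1]}$, since for $t\in[0,1]$ the set $\{\phi\le t\}=\{|x|\ge (1-t)/2\}$ has measure $t$. I then define $f^*:=Q\circ\phi$ on ${\mathbb S}\setminus\{1/2\}$ and $f^*(1/2):=\inf_{u\in(0,1]}Q(u)$.

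Checking the first property: symmetry of $f^*$ is inherited from $\phi$; $f^*$ is non-increasing on $[0,1/2)$ because $\phi$ is strictly decreasing there and $Q$ non-decreasing; it is right-continuous on $[0,1/2)$ because $\phi(x)\uparrow\phi(x_0)$ as $x\downarrow x_0$, so $f^*(x)=Q(\phi(x))\to Q(\phi(x_0)^-)=Q(\phi(x_0))=f^*(x_0)$ by left-continuity of $Q$; and it is left-continuous at $1/2$ by the very choice of $f^*(1/2)$, since $\phi(x)\downarrow 0$ as $x\uparrow 1/2$ while $Q$ is non-decreasing. Moreover $f^*$ is real-valued on ${\mathbb S}\setminus\{0,1/2\}$ (there $\phi\in(0,1)$) and may equal $+\infty$ (at $x=0$, if $\mu$ is unbounded above) or $-\infty$ (at $x=1/2$, if $\mu$ is unbounded below), i.e.\ only on the $\textrm{\rm Leb}_{{\mathbb S}}$-null set $\{0,1/2\}$.

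Checking the second property: fix $a\in{\mathbb R}$. For $x\neq 1/2$ one has $\phi(x)\in(0,1]$, so the duality relation gives $f^*(x)\le a\iff\phi(x)\le F(a)$; since $\{1/2\}$ is null and $\phi$ pushes $\textrm{\rm Leb}_{{\mathbb S}}$ onto $\textrm{\rm Leb}_{[0,1]}$, this yields $\textrm{\rm Leb}_{{\mathbb S}}(\{f^*\le a\})=\textrm{\rm Leb}_{{\mathbb S}}(\{\phi\le F(a)\})=F(a)=\textrm{\rm Leb}_{{\mathbb S}}(\{f\le a\})$, which is exactly the required equality (the null exceptional set also guarantees that no mass of the image measure escapes to $\pm\infty$).

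For uniqueness, let $g$ be any function satisfying both properties. For each $a$, the monotonicity and right-continuity of $g$ on $[0,1/2)$ force $\{x\in[0,1/2):g(x)>a\}$ to be a half-open interval $[0,r(a))$ with $r(a)\in[0,1/2]$, so by symmetry $\{g>a\}$ coincides, up to the null point $1/2$, with $(-r(a),r(a))$, whence $\textrm{\rm Leb}_{{\mathbb S}}(\{g>a\})=2r(a)$; the second property then pins $2r(a)=1-F(a)$, a quantity depending only on $f$. Therefore any two admissible functions have the same superlevel sets $\{\,\cdot>a\,\}$ on $[0,1/2)$ for every $a$, and since $g(x)=\sup\{a\in{\mathbb R}:g(x)>a\}$ (with $\sup\emptyset=-\infty$ and $\sup{\mathbb R}=+\infty$), they agree on $[0,1/2)$, hence on all of ${\mathbb S}$ by symmetry and left-continuity at $1/2$. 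The only genuinely delicate point in the argument is the bookkeeping of one-sided continuity and extended values: one must pick the \emph{left}-continuous quantile $Q$ precisely so that $Q\circ\phi$ turns out \emph{right}-continuous on $[0,1/2)$, and handle the endpoint $x=1/2$ (where $f^*$ may legitimately be $-\infty$) by hand; since $\{0,1/2\}$ is null, this has no effect on the distributional identity. Beyond that, I expect only routine verifications.
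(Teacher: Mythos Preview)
Your proof is correct and self-contained. The paper itself does not supply a proof of this proposition: it states the result and refers the reader to Baernstein's monograph (Definition 1.29 and Chapter 7) and to the companion article \cite{delarueHammersley2022rshe} for the details. Your construction via the left-continuous quantile $Q$ of the pushforward $\mu=\textrm{Leb}_{\mathbb S}\circ f^{-1}$, composed with the folding map $\phi(x)=1-2|x|$, is exactly the natural one and the verifications you give (the interplay between left-continuity of $Q$ and the orientation reversal under $\phi$ to obtain right-continuity on $[0,1/2)$, the handling of the endpoint $1/2$, and the superlevel-set argument for uniqueness) are all sound.
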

Those functions $f^*$ that are both square integrable with respect to Lebesgue measure on $\bS$ and satisfy the first item in the definition above are in one-to-one correspondence with the set ${\mathcal P}_2({\mathbb R})$ of probability measures on ${\mathbb R}$ having finite-second moment. See Baernstein \cite{baernstein2019symmetrizationInAnalysis} for further details and in particular Definition 1.29 therein for symmetric rearrangements in the Euclidean setting and Chapter 7 in the same book for a treatment of spherical symmetric rearrangements.
For a probability measure $\mu \in {\mathcal P}_2({\mathbb R})$, we then call $F_{\mu}^{-1}$ the pre-image of 
$\mu$ by the mapping that sends functions $f^*$ (as above) onto the probability measure $\textrm{\rm Leb}_{\mathbb S} \circ 
(f^*)^{-1}$. 

 The following definition will be used throughout:
\begin{definition}
	\label{def:1.2:U2}
	A function $f  : {\mathbb S} \rightarrow {\mathbb R}$ 
	is called symmetric non-increasing if $f=f^*$.  
	The collection of equivalence classes in $L^2({\mathbb S})$ containing a symmetric non-increasing function is denoted  by $U^2({\mathbb S})$. 
\end{definition} 
In what follows, elements of $L^2(\bS)$ that are Lebesgue almost everywhere symmetric comprise the set $L^2_{\rm sym}({\mathbb S})$. 
One of these elements is said to be \emph{non-increasing} (we refrain from tautological use of the word symmetric and believe that given the context of the circle, this should not be a source of confusion) if it coincides almost everywhere with an element of 
$U^2({\mathbb S})$. Notice that we may choose the latter representative to {be} uniquely defined as a symmetric non-increasing function. Thus, two elements of $U^2({\mathbb S})$ that coincide in $L^2({\mathbb S})$ 
coincide in fact everywhere on ${\mathbb S}$ (courtesy of the left- and right-continuity properties). 
Also, the following proposition is of clear importance. 
\begin{proposition}[Preservation of $L^p$ norms]
	\label{lem:isometry}
	With the same 
	notations as in Proposition 
	\ref{prop:rearrangement:def}, we have, for any $p \in [1,\infty]$, 
	$ \lVert f^* \rVert_p = \lVert f  \rVert_p$.
\end{proposition}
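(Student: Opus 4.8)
The plan is to deduce the statement from the single fact that $f$ and $f^*$ are \emph{equimeasurable}, which is exactly the content of item (2) in Proposition \ref{prop:rearrangement:def}, together with the elementary observation that any $L^p$ norm of a function depends only on the push-forward of the underlying measure by that function. First I would check that $f^*$ is finite $\textrm{\rm Leb}_{\mathbb S}$-almost everywhere: since $f$ is ${\mathbb R}$-valued, letting $a \to +\infty$ and then $a \to -\infty$ in the identity of item (2) forces $\textrm{\rm Leb}_{\mathbb S}(\{f^*=+\infty\})=0$ and $\textrm{\rm Leb}_{\mathbb S}(\{f^*=-\infty\})=0$. Hence the push-forwards $\mu := \textrm{\rm Leb}_{\mathbb S}\circ f^{-1}$ and $\mu^\ast := \textrm{\rm Leb}_{\mathbb S}\circ (f^*)^{-1}$ are genuine Borel probability measures on ${\mathbb R}$, and item (2) states precisely that their cumulative distribution functions agree, i.e. $\mu((-\infty,a]) = \mu^\ast((-\infty,a])$ for every $a\in{\mathbb R}$. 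Since the half-lines $(-\infty,a]$ form a $\pi$-system generating $\mathcal{B}({\mathbb R})$ and both measures are finite, a standard uniqueness-of-measures (Dynkin) argument gives $\mu = \mu^\ast$ on all of $\mathcal{B}({\mathbb R})$.

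Next, for $p\in[1,\infty)$ I would invoke the change-of-variables (transfer) formula to write, in $[0,+\infty]$,
\begin{equation*}
\int_{\mathbb S} \lvert f\rvert^p \,\d\textrm{\rm Leb}_{\mathbb S} \;=\; \int_{\mathbb R} \lvert a\rvert^p \,\d\mu(a) \;=\; \int_{\mathbb R} \lvert a\rvert^p \,\d\mu^\ast(a) \;=\; \int_{\mathbb S} \lvert f^*\rvert^p \,\d\textrm{\rm Leb}_{\mathbb S},
\end{equation*}
the middle equality being $\mu=\mu^\ast$; taking $p$-th roots yields $\lVert f^*\rVert_p = \lVert f\rVert_p$. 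For $p=\infty$ I would argue directly: $\{\lvert f\rvert > M\} = \{f>M\}\cup\{f<-M\}$, and each of these events has $\textrm{\rm Leb}_{\mathbb S}$-measure determined by the distribution function of $f$, hence equal to that of the corresponding event for $f^*$; therefore $\lVert f\rVert_\infty = \inf\{M\geq 0 : \textrm{\rm Leb}_{\mathbb S}(\{\lvert f\rvert>M\})=0\}$ coincides with the same expression for $f^*$. Alternatively, one may simply let $p\to\infty$ in the finite-$p$ identity, using that $\lVert g\rVert_p\to\lVert g\rVert_\infty$ as $p\to\infty$ on the probability space $({\mathbb S},\textrm{\rm Leb}_{\mathbb S})$.

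I do not expect any genuine obstacle in this proof: the only steps requiring a little care are verifying the $\textrm{\rm Leb}_{\mathbb S}$-a.e. finiteness of $f^*$ (so that $\mu^\ast$ is supported on ${\mathbb R}$ rather than on $[-\infty,+\infty]$) and the passage from equality of the distribution functions to equality of the measures $\mu$ and $\mu^\ast$, which is the sole non-formal point and is handled by the $\pi$-$\lambda$ theorem.
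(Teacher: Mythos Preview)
Your argument is correct and is the standard equimeasurability proof. The paper does not actually give a proof of this proposition: it is stated as preliminary material and the reader is referred to Baernstein's monograph \cite{baernstein2019symmetrizationInAnalysis} for background on rearrangements, so there is nothing to compare beyond noting that your write-up supplies the details the paper leaves implicit.
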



\subsection{The Heat Kernel and the Rearrangement Operator}
Critical to the construction {presented in \cite{delarueHammersley2022rshe}}
 is the composition of the rearrangement operation and the heat kernel. Two basic properties of this composition are recalled in the lemma below. 
Firstly, the periodic heat semigroup (with specific diffusivity parameter $1$) on the circle $\mathbb{S}$, which we denote $(e^{t\Delta_x})_{t \geq 0}$, 
{with kernel 
$(\Gamma(t,x))_{t \geq 0, x \in {\mathbb S}}$ 
that satisfies}

\begin{lemma}
	\label{lem:heatkernelisdecreasing}
	For any $t>0$, the function $x \mapsto \Gamma(t,x)$ is non-increasing on $(0,1/2)$ and non-decreasing on 
	{$(-1/2,0)$}, {i.e., 
	$\Gamma(t,\cdot)$ is equal to $\Gamma(t,\cdot)^*$.}
	
Furthermore, letting $f$ belonging to $L^1({\mathbb S})$ be symmetric non-increasing on the circle (i.e., $f=f^*$), then for any $t>0$, the convolution $f \star \Gamma(t,\cdot)$ is also symmetric non-increasing on the circle.
\end{lemma}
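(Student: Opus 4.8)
\emph{Proof proposal.}
The plan is to prove, in order: the symmetry of $\Gamma(t,\cdot)$; a self-contained fact that the circular convolution of two symmetric non-increasing functions of $L^1(\mathbb{S})$ is again symmetric non-increasing (this serves both for the lemma's second statement and for the third part of the proof); and finally the monotonicity of $\Gamma(t,\cdot)$, which I deduce from that fact via the heat semigroup once it is known for small $t$. Throughout write $p_t(y)=(4\pi t)^{-1/2}\exp(-y^2/(4t))$, so that $\Gamma(t,x)=\sum_{n\in\mathbb{Z}}p_t(x+n)$. The substitution $n\mapsto-n$ in \eqref{eq:Gamma} shows $\Gamma(t,-x)=\Gamma(t,x)$, hence $\Gamma(t,\cdot)$ is smooth, $1$-periodic and symmetric; this already gives the first half of the first assertion.

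For the convolution fact, note that the only symmetric arcs of $\mathbb{S}$ are those of the form $(-r,r)$ (``centred at $0$'') or $(1/2-r,1/2+r)$ (``centred at $1/2$''), $r\in[0,1/2]$. The crux is: for symmetric arcs $A,B$, the map $x\mapsto(\mathbbm{1}_A\star\mathbbm{1}_B)(x)=\mathrm{Leb}\bigl(A\cap(x+B)\bigr)$ is symmetric, and on $[0,1/2]$ it is non-increasing if $A,B$ are both centred at $0$ or both centred at $1/2$, and non-decreasing if exactly one of them is centred at $1/2$. On translating $A$ and/or $B$ by $1/2$ — which translates the convolution by $0$ or $1/2$, and translating a symmetric non-increasing function by $1/2$ produces a symmetric non-decreasing one — this reduces to $A=(-a,a)$, $B=(-b,b)$; there $\mathbbm{1}_A\star\mathbbm{1}_B$ is the $1$-periodisation of the planar convolution $T:=\mathbbm{1}_{(-a,a)}\star_{\mathbb{R}}\mathbbm{1}_{(-b,b)}$, a continuous even tent function supported in $[-(a+b),a+b]\subseteq[-1,1]$ with $T'\in\{-1,0\}$ a.e.\ on $(0,\infty)$; since $a+b\le1$, the periodisation restricted to $[0,1/2]$ equals $x\mapsto T(x)+T(1-x)$, whose a.e.\ derivative $T'(x)-T'(1-x)$ is $\le0$ by a short case check on the value $0$ or $-1$ of $T'(1-x)$. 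Granting this, one writes each of $f,g$ through the layer-cake formula $f=\int_0^\infty\mathbbm{1}_{\{f>\lambda\}}\,\d\lambda-\int_0^\infty\mathbbm{1}_{\{f<-\lambda\}}\,\d\lambda$, in which the sets $\{f>\lambda\}$ are symmetric arcs centred at $0$ and the sets $\{f<-\lambda\}$ symmetric arcs centred at $1/2$, expands $f\star g$ into four double integrals, and observes that in each one the integrand, once its $\pm$ sign is absorbed, is by the crux a symmetric non-increasing function of $x$; since the symmetric non-increasing functions form a convex cone closed under such superpositions, $f\star g$ is symmetric non-increasing. In particular, once $\Gamma(t,\cdot)$ is known to be symmetric non-increasing, so is $f\star\Gamma(t,\cdot)$, which is the lemma's second statement.

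It remains to show $\partial_x\Gamma(t,\cdot)\le0$ on $(0,1/2)$ for all $t>0$. I would establish this first for small $t$, writing $\Gamma(t,x)=p_t(x)+R_t(x)$ on $[-1/2,1/2]$ with $R_t=\sum_{n\neq0}p_t(\cdot+n)$. On $[\sqrt t,\,1/2-\sqrt t]$ the main term $\partial_x p_t(x)=-\tfrac{x}{2t}p_t(x)$ is negative and, by comparing Gaussian exponents ($x^2<(1-x)^2$ for the nearest competing translate), dominates $R_t'$ in absolute value; on the two shrinking end-neighbourhoods $(0,\sqrt t]$ and $[1/2-\sqrt t,1/2)$ one uses instead that $\partial_x\Gamma(t,0)=\partial_x\Gamma(t,1/2)=0$ by symmetry, while $\partial_{xx}\Gamma(t,0)=\partial_t\Gamma(t,0)<0$ and $\partial_{xx}\Gamma(t,1/2)=\partial_t\Gamma(t,1/2)>0$ — both visible from the series, $\Gamma(t,0)=\sum_n p_t(n)$ being decreasing in $t$ and $\Gamma(t,1/2)=2\sum_{k\ge0}p_t(k+1/2)$ increasing in $t$ for $t<1/8$ — so $\partial_{xx}\Gamma(t,\cdot)$ keeps a fixed sign on each end-neighbourhood and $\partial_x\Gamma(t,\cdot)<0$ there by integration. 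This yields a threshold $\varepsilon>0$ below which $\Gamma(t,\cdot)$ is symmetric non-increasing. For arbitrary $t>0$, pick $N$ with $t/N<\varepsilon$; the semigroup identity $\Gamma(a,\cdot)\star_{\mathbb{S}}\Gamma(b,\cdot)=\Gamma(a+b,\cdot)$, obtained from $p_a\star_{\mathbb{R}}p_b=p_{a+b}$ by unfolding the periodisations, gives $\Gamma(t,\cdot)=\Gamma(t/N,\cdot)^{\star N}$, and $N-1$ applications of the convolution fact finish the proof of the lemma. (Alternatively, one may propagate from small to all $t$ by the parabolic maximum principle applied to $v:=\partial_x\Gamma$, which solves the heat equation on $(0,1/2)$ with $v(t,0)=v(t,1/2)=0$, or simply invoke the classical unimodality of wrapped Gaussians, cf.\ \cite{delarueHammersley2022rshe}.)

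The one genuinely technical step is the small-$t$ base case: every elementary route to the monotonicity of $\Gamma(t,\cdot)$ I can see passes through this asymptotic bookkeeping — isolating the dominant Gaussian bump(s), controlling the exponentially small tail in $C^1$, and absorbing the degeneracy of the decay of $\partial_x\Gamma$ at its two critical points via the symmetry of $\Gamma(t,\cdot)$. The layer-cake/rearrangement argument and the semigroup bootstrap are, by comparison, soft.
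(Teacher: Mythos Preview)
The paper does not prove this lemma: it is stated as a recollection, with the reader directed to the companion paper \cite{delarueHammersley2022rshe} for details. So there is no in-paper argument to compare against. That said, your proposal is sound and self-contained, and worth a brief comment on structure.

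Your layer-cake reduction of the convolution claim to indicator functions of symmetric arcs is clean and correct; the sign bookkeeping (arcs centred at $0$ versus at $1/2$, and the four cross-terms) is exactly what is needed, and the tent-function computation for $\mathbbm{1}_{(-a,a)}\star_{\mathbb S}\mathbbm{1}_{(-b,b)}$ checks out (the key inequality $1-(a+b)\ge|a-b|$ follows from $\max(a,b)\le 1/2$). This part is in fact the standard rearrangement-inequality route (Riesz-type), and is also how the second statement is handled in \cite{delarueHammersley2022rshe} and in Baernstein's monograph \cite{baernstein2019symmetrizationInAnalysis}.

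Your treatment of the monotonicity of $\Gamma(t,\cdot)$ is more elaborate than necessary but correct. The small-$t$ base case does go through as you outline: on $[\sqrt t,\,1/2-\sqrt t]$ the ratio $\dfrac{x\,p_t(x)}{(1-x)\,p_t(1-x)}=\dfrac{x}{1-x}\,e^{(1-2x)/(4t)}$ is bounded below by roughly $e^{1/(2\sqrt t)}$ (its minimum on that interval sits at the right endpoint), which swamps the geometric tail of the remaining translates; and on the two end-neighbourhoods your second-derivative argument via $\partial_{xx}\Gamma=\partial_t\Gamma$ is valid since the $n=0$ (resp.\ $n=0,1$) contribution to $p_t''$ keeps a definite sign there for $t<1/8$. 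The semigroup bootstrap $\Gamma(t,\cdot)=\Gamma(t/N,\cdot)^{\star N}$ together with your convolution fact then closes the loop. A shorter alternative---which you mention---is to invoke the classical unimodality of the wrapped Gaussian (Jacobi $\vartheta_3$), or the parabolic maximum principle for $v=\partial_x\Gamma$ on $(0,1/2)$ with zero boundary data; either dispenses with the small-$t$ bookkeeping entirely.
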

 
\subsection{Some Notation}

\textit{Analysis on the torus.} 
Recall that $\bS$ denotes the circle parametrised by the interval of length $1$. Also, the functions
\begin{equation*}
	\begin{split}
		e_m^{\Re} : \bS \ni x \mapsto \sqrt{2} \cos(2 m \pi x), \quad 
		e_m^{\Im} :  \bS \ni x  \mapsto \sqrt{2} \sin(2 m \pi x),  
	\end{split}
\end{equation*}
for any natural integer $m$, {with $e_0^{\Re} :\equiv 1$ and $e_0^{\Im} :\equiv 0$},  
form the complete Fourier basis on $L^2(\bS)$, where $L^2( \bS) $ is the space of square integrable functions on $\bS$. Due to the symmetry in our construct, we most use just the even (cosine) Fourier functions, which prompts us to use the shorter notation 
	$e_m$ for $e_m^{\Re}$.
	
	The Lebesgue measure on ${\mathbb S}$ is denoted $\textrm{\rm Leb}_{{\mathbb S}}$ with $\d \textrm{\rm Leb}_{{\mathbb S}}(x)$ written as $\d x$. For any real $p \geq 1$, we call $\| \cdot \|_p$
	the $L^p$ norm on 
	the space of measurable functions $f$ on $({\mathbb S},\textrm{\rm Leb}_{{\mathbb S}})$ with $\int_{\bS} |f(x)|^p dx<\infty$. 
	Similarly, when 
	$p=\infty$, the notation 
	$\| \cdot \|_\infty$ is used for the $L^\infty$ (supremum) norm, i.e. $\| f \|_\infty:=\textrm{essup}\{f(x):x\in\bS\}$. 
	{The inner product 
	between two elements $f$ and $g$ in $L^2({\mathbb S})$ is denoted $\langle f,g \rangle_2$, or $\langle f, g \rangle$, or  also $f \cdot g$ depending on the context, and more generally duality products between 
	functions and distributions on ${\mathbb S}$ is usually denoted $\langle \cdot, \cdot \rangle$.}
	For an element $f \in L^1(\bS)$ and for a non-negative integer $m$, we call $\hat{f}_m^{\Re} := \int_{\bS} f(x) e_m^{\Re}(x) \d x$ the 
	cosine
	Fourier mode of $f$ of index $m$ and $\hat{f}_m^{\Im} := \int_{\bS} f(x) e_m^{\Im}(x) \d x$
	the sine Fourier mode of $f$ of index $m$. {The Fourier mode $\hat{f}_0^{\Re}$ is sometimes denoted $\bar{f}$.}
	If $f$ is Lebesgue almost everywhere (written a.e. hereafter) symmetric, i.e. 
	$f(-x)=f(x)$ {a.e.}, all the sine Fourier modes are $0$ and we can just write 
	$\hat{f}_m=\langle f, e_m\rangle$ in place of $\hat{f}_m^{\Re}$. 
	In that case, $\hat{f}^m$ is a real number. 
	As 
	{we already mentioned}, 
	these symmetric 
	functions are used quite often in the text
	{and we} denote by $L^2_{\rm sym}({\mathbb S})$ the set of functions $f$ in 
	$L^2({\mathbb S})$ that are {a.e.} symmetric. More generally, for a parameter ${s} \in {\mathbb R}$, we denote by
	$H^{{s}}_{\rm sym}({\mathbb S})$ the Sobolev space of {a.e. }symmetric functions 
	({or Schwartz distributions if $s<0$})
	$f$ such that
	\begin{equation*}
		\| f \|_{2,\mu}^2 := \sum_{m \in {\mathbb N}_0} (m \vee 1)^{2 {s}}  \hat{f}_m^2 < \infty,
	\end{equation*}
	(${\mathbb N}$ is the collection of natural numbers 1, 2, 3, ... The set ${\mathbb N}_0$ is defined as 
	${\mathbb N}_0 = {\mathbb N} \cup \{0\}$). Of course, $H^0_{\rm sym}({\mathbb S})$ is nothing but $L^2_{\rm sym}(\bS)$. 
	The inner product on 
	$H^{\mu}_{\rm sym}({\mathbb S})$
	is denoted $\langle f,g \rangle_{2,\mu}:=\sum_{m\in\bN_0} (m \vee 1)^{2 \mu} \hat{f}_m\hat{g}_m$. 
	
	For any integer $k \geq 1$, we denote by 
	${\mathcal C}^k({\mathbb S})$ the space of $k$-times continuously differentiable functions on 
	${\mathbb S}$.  
\vskip 4pt

\noindent \textit{More functional spaces.}
The space of continuous functions from a metric space $\cX$ to another metric space $\cY$ 
is denoted $\cC(\cX,\cY)$.
The space of  bounded and measurable functions from a metric space $\cX$ to ${\mathbb R}$ is denoted $B_b(\cX)$. 
\vskip 4pt

\noindent \textit{Space of probability measures.} The space of probability measures on a Polish space ${\mathcal X}$ is denoted ${\mathcal P}({\mathcal X})$. It is sometimes equipped with the total variation 
distance $d_{\rm TV}$, defined by 
\begin{equation*} 
d_{\rm TV}(\mu,\nu) := \sup_{A \in {\mathcal B}({\mathcal X})} \vert \mu(A) - \nu(A)\vert,
\quad \mu,\nu \in {\mathcal P}({\mathcal X}),
\end{equation*}
where ${\mathcal B}({\mathcal X})$ is the Borel $\sigma$-field on ${\mathcal X}$. 
When ${\mathcal X}= {\mathbb R}$, we often restrict ourselves to the space ${\mathcal P}_2({\mathbb R})$ of probability measures on ${\mathbb R}$ with a finite second moment. 
We equip ${\mathcal P}_2({\mathbb R})$ with the $2$-Wasserstein distance 
${\mathcal W}_2$ defined by 
\begin{equation*} 
{\mathcal W}_2(\mu,\nu) = \inf_{\pi} \biggl[ \int_{{\mathbb R} \times {\mathbb R}} \vert x-y \vert^2 \d \pi(x,y) \biggr]^{1/2},
\quad \mu,\nu \in {\mathcal P}_2({\mathbb R}), 
\end{equation*} 
with the infimum being taken over probability measures $\pi$ over ${\mathbb R}^2$ admitting $\mu$ and $\nu$ as first and second marginal laws. 

The law of a random variable $X$ (with values in a Polish space ${\mathcal X}$) is denoted ${\mathscr L}(X)$. 
\vskip 4pt

\textit{Derivatives.} 
	There are several different derivatives used in this text, and thus the following notations are included. For a differentiable real-valued function on $\bS$, we write $\nabla_x f(x)$ for the \emph{spatial} derivative of $f(x)$. Additionally,
	we let $\Delta_x := \nabla_x^2$ {(sometimes, this is just written as $\Delta$)}. 
	
	When dealing with a function $F : L^2_{\rm sym}({\mathbb S}) \rightarrow {\mathbb R}$, the derivative is understood in the usual Fréchet sense. 
	The gradient (at an element $X \in L^2_{\rm sym}({\mathbb S})$) is denoted $D_X F(X)$ and is regarded as an element of 
	$L^2_{\rm sym}({\mathbb S})$. When 
	$F:
	{\mathcal P}_2({\mathbb R}) \rightarrow {\mathbb R}$, the derivative function is understood in Lions' sense, see Lions' lectures at the collège de France  \cite{LionsVideo} together with 
Chapter 5 in the 
book \cite{CarmonaDelarueI}. In short, the function $F$ is assumed to have a \emph{lift},
\begin{equation*} 
L^2_{\rm sym}({\mathbb S}) \ni X \mapsto F\bigl({\mathscr L}(X)\bigr)\in \bR,  
\end{equation*} 
that is Fréchet differentiable (here ${\mathscr L}(X)$ is nothing by $\textrm{\rm Leb}_{\mathbb S} \circ X^{-1}$). We abuse notation to write the lifted function as $F$. Its derivative {(at X)} has the form 
\begin{equation}
\label{eq:Lions:derivative}
  D_X F (X)(x) {:}= \partial_{\mu} F \bigl( {\mathscr L}(X) \bigr) \bigl( X(x) \bigr), \quad x \in {\mathbb S},
\end{equation}
where 
\begin{equation*} 
\Bigl( \partial_\mu F\bigl( {\mathscr L}(X) \bigr) : {\mathbb R} \ni y \mapsto \partial_\mu F\bigl( {\mathscr L}(X) \bigr)(y) \Bigr) \in L^2\bigl( {\mathbb R}, {\mathscr L}(X) ). 
\end{equation*} 
We sometimes call the functional $\partial_{\mu}F$ the Wasserstein derivative of $F$. In order to distinguish notationally the derivative on $\bR$ from the derivative on the circle, we will often use the parameter $y$ on the real line, or more generally on the Euclidean space $\bR^d$. Therefore, mixed derivatives will be denoted $\nabla_y\partial_\mu$.
\vskip 4pt 	 
\textit{Stochastic processes.} 
For a stochastic process $Y\equiv Y_\cdot=(Y_t)_{t\in [0,\infty)}$ with values in ${\mathbb R}$, its bracket or quadratic variation at time $t$ is denoted 
${\llangle Y \rrangle_t}$.
For a local martingale $(M_t)_{t \in [0,\infty)}$ (with values in ${\mathbb R}$), we call $({\mathcal E}_t(M))_{t \geq 0}$ its Doléans-Dade exponential, defined by 
\begin{equation*} 
{\mathcal E}_t(M) = \exp \bigl( M_t - \tfrac{1}{2} {\llangle M \rrangle}_t \bigr), \quad t \geq 0. 
\end{equation*}

\textit{Miscellaneous.} 	
	We now introduce some standard notations. For a real number $x$, we write $\lfloor x \rfloor$ for the floor of $x$,
	$\lceil x \rceil$ for the ceiling of $x$ and $x_+:=\max(x,0)$ 
	(resp. $x_- = \min(-x,0)$)
	for the positive (resp. negative) part of $x$.  
	For two reals $x$ and $y$, we let $x \vee y := \max(x,y)$ and $x \wedge y := \min(x,y)$.

	As for constants that are used in the various inequalities, they are
	usually written in the form $c_{a,b}$ or $C_{a,b}$, where the subscripts are quantities on which the current constant depends,
	and are implicitly allowed to vary from line to line (as long as they just depend on the same quantities $a$ and $b$).

\subsection{Definition of the Rearranged Stochastic Heat Equation}\label{sec defRSHE}
\label{sec prelim}
A noise, coloured in space to be $L^2$ valued, is defined by
\begin{equation}
	\label{def:tilde:noise}
	\begin{split}
		{W}_t :=  \sum_{m\in \bN_0}\lambda_m \beta^m_t e_m, \quad t \geq 0, 
	\end{split}
\end{equation}
where $\{\beta^m\}_{m\in \bN_0}$ is a family of independent Brownian motions. The colouring coefficients, $\{ \lambda_m \}_{m \in {\mathbb N}_0}$, are assumed strictly positive and such that, 
for some parameter $\lambda \in {(\nicefrac12,1)}$, and some sufficiently large $M_{\textrm{\rm col}}$, there exist constants $c_{\textrm{\rm col}}$ and $C_{\textrm{\rm col}}$ such that 
\begin{equation}
	\label{eq colouring} \ 
\forall m\geq M_{\textrm{\rm col}},\quad	c_{\textrm{\rm col}} m^{-\lambda} \leq \lambda_m \leq C_{\textrm{\rm col}} m^{-\lambda},
\end{equation}
{which allows one to let 
\begin{equation}
\label{eq:c_|ambda}
c_\lambda:=\sum_{m\in \bN_0}\lambda_m^2 < \infty.
\end{equation}
Also, without any loss of generality, we assume that $\sup_{m \in {\mathbb N}_0}  \lambda_m \leq 1$.}
 
{Returning to \eqref{eq:MKV:3:bb}
we consider the following hypothesis on the drift $F$}:
\begin{assumption}
  \label{ass drift form}
The coefficient $F$ reads in the form $	L^2_{\rm sym}(\bS) \ni {u :\bigl( {\mathbb S} \ni x \mapsto u (x)\bigr)} \mapsto  {F(u):\bigl(  {\mathbb S} \ni x \mapsto F(u)(x)\bigr)}\in L^2_{\rm sym}(\bS)$.
\end{assumption}
 
\begin{definition}
	\label{def:existence} For fixed $ {T \in (0,\infty]}$, set $I:={[0,T] \cap {\mathbb R}}$. On a given probability space $(\Omega,{\mathcal A},{\mathbb F},{\mathbb P})$ (with filtration $\bF$ satisfying the usual conditions) equipped with 
	an $L^2_{\rm sym}({\mathbb S})$-valued Brownian motion $(W_t)_{t \in I}$ 
	{of the form 
		\eqref{def:tilde:noise}
		(with 
		$\{\beta^m\}_{m\in \bN_0}$ being a family of  independent ${\mathbb F}$-Brownian motions)} 
	and with 
	an ${\mathcal F}_0$-measurable initial condition $X_0$, valued in $
	U^2({\mathbb S})$ (see 
	Definition \ref{def:1.2:U2}) having finite moments 
	of any order, say that a pair of processes  
	$(X_t,\eta_t)_{t \in I}$ solves the drifted
	rearranged stochastic heat equation driven by 
	$(W_t)_{t \in I}$ and started from $X_0$ up to time $T$ if 
	\begin{enumerate}
		\item $(X_t)_{t \in I}$ is a continuous ${\mathbb F}$-adapted process with values in $
		U^2({\mathbb S})$;
		\item $(\eta_t)_{t \in I}$ is a continuous ${\mathbb F}$-adapted process with values in $H^{-2}_{\rm sym}({\mathbb S})$, started from $0$, such that, $\bP$-almost surely, for any $u \in H^2_{\rm sym}({\mathbb S})$ that is non-increasing, the path
		$(\langle \eta_t,u \rangle)_{t \in I}$ is non-decreasing;
		\item $\bP$-a.s., for any $u \in H^{2}_{\rm sym}({\mathbb S})$, for all $t \in I$, 
		\begin{equation}\label{def RSHE 1}
			\begin{split}
				\langle  X_t,u \rangle    =   \langle X_0,u\rangle + \int_0^t \Bigl(  \langle F(X_r),u \rangle + \langle   {X}_r  ,\Delta     u \rangle \Bigr) \d r +\langle W_t  ,u \rangle + \langle \eta_t ,u\rangle;
			\end{split} 
		\end{equation}
		\item for any $t \in I$, 
		\begin{equation}
			\label{def RSHE 2}
			\lim_{\varepsilon\searrow0}\mathbb{E}\left[ \int_0^t  e^{\varepsilon\Delta}X_r   \cdot \d  \eta_r \right]= 0.
		\end{equation}
	\end{enumerate}
\end{definition} 
At times we say that a process or pair of processes satisfies \eqref{def RSHE 1}, this should be read to mean that it satisfies all the conditions of Definition \ref{def:existence} providing a solution to the (possibly drifted) rearranged stochastic heat equation. 

The integral in \eqref{def RSHE 2} is constructed in a path-by-path sense as follows {(which, for simplicity, we spell out in the case $I=[0,\infty)$)}: 

\begin{definition}
	\label{def:integral:RS}
	For any  two (deterministic) curves 
	$(n_t)_{t \geq 0}$ 
	and 
	$(z_t)_{t \geq 0}$, such that 
	\begin{enumerate}
		\item
		$t \mapsto n_t$ is a continuous function 
		from 
		$[0,\infty)$ to 
		$H^{-2}_{\rm sym}({\mathbb S})$;
		\item 
		for any 
		non-increasing 
		$u \in H^2_{\rm sym}({\mathbb S})$, the 
		function $[0,\infty) \ni t  \mapsto  \langle n_t , u \rangle$ is non-decreasing;
		\item $(z_t)_{t \geq 0}$ 
		{being a continuous function from 
			$[0,\infty)$ to  $L^2_{\rm sym}({\mathbb S})$},
	\end{enumerate} 
 one may define, almost surely, for 
	any $\varepsilon>0$  
	the integral process
	$( \int_0^t   e^{\varepsilon \Delta} z_r \cdot \d n_r )_{t \geq 0}$
	as the following limit, for the uniform topology on {every $[0,T]$},
	\begin{equation*}
		\int_0^t   e^{\varepsilon \Delta} z_r \cdot \d n_r
		:=
		\lim_{M \rightarrow \infty} 
		\sum_{m=0}^M 
		\biggl( 
		\int_0^t   \bigl\langle e^{\varepsilon \Delta} z_r,e_m \bigr\rangle  \d \langle n_r,e_m^+ \rangle
		- 
		\int_0^t   \bigl\langle e^{\varepsilon \Delta} z_r,e_m \bigr\rangle  \d \langle n_r,e_m^- \rangle
		\biggr).
	\end{equation*}
	{The} integrals inside the sum on the right hand side are constructed as Stieltjes integrals since one may indeed decompose each $(e_m)_{m \in {\mathbb N}_0}$, as the difference of two symmetric non-increasing functions.	
The integral process satisfies the estimate:
	\begin{equation}
		\label{eq:integral:RS:bound:0:bis}
		\forall T \geq 0,
		\quad 
		\sup_{t \in [0,T]} 
		\biggl\vert 
		\int_0^t  e^{\varepsilon \Delta} z_r
		\cdot
		\d   n_r  
		\biggr\vert 
		\leq {c_{\varepsilon}} \| n_T \|_{2,-2} \times \sup_{t \in [0,T]} \| z_t \|_2.
	\end{equation}

For further details of this construction, see \cite{delarueHammersley2022rshe}. 
\end{definition}  

\section{Weak Existence Uniqueness for the Drifted Equation}
\label{sec exist}
 
\subsection{Assumptions}

{In this section, we study 
\eqref{def RSHE 1} under the two assumptions below.} 

\begin{assumption}[Polynomial Moments for Initial Condition]\label{ass init l2}
The initial distribution $\mu_0$ on $U^2(\bS)$
satisfies 
$\int_{U^2(\bS)}  \| h \|^p_2  d\mu_0 (h) < \infty$, for 	any $p \geq 1$.
\end{assumption} 

\begin{assumption}[Growth of Drift]\label{ass drift eu}
The drift, denoted $F$, satisfies the following growth assumptions:
	\begin{align}
	&\lVert F(u) \rVert_2^2  \leq c_F(1+\lVert u \rVert_2^2), \qquad u \in 
	U^2({\mathbb S}), 
	\label{eq bd4der:1}
	\\
	& \lVert \nabla_x  ( F(u) ) \rVert_{2 }^2 
		   \leq 
		    C_F\left( 1+ \lVert \nabla_x u \rVert_{2}^2\right), \qquad u \in H^1_{\rm sym}({\mathbb S}) \cap U^2({\mathbb S}), 
		    \label{eq bd4der}
		    \end{align} 
\end{assumption}
 
\begin{remark}
As spelled out above, it suffices to define $F$ on $U^2({\mathbb S})$, but, in fact, one could easily extend $F$ to the entire $L^2_{\rm sym}({\mathbb S})$
in a way that would preserve Assumption 
\ref{ass drift eu}. For instance, one can consider $F(u^*)$, for $u \in L^2_{\rm sym}({\mathbb S})$. 
Condition 
\eqref{eq bd4der:1} would remain true for $u \in L^2_{\rm sym}({\mathbb S})$ thanks to 
Proposition 
\ref{lem:isometry}. 
Condition 
\ref{eq bd4der} would remain true for $u \in H^1_{\rm sym}({\mathbb S})$ thanks to the so-called 
Pólya–Szegő inequality, which says that $\lVert \nabla_x u^* \rVert_2 \leq \lVert \nabla_x u \rVert_2,$
see for instance \cite[Theorem 7.4]{baernstein2019symmetrizationInAnalysis}.
For example, 
such an extension could be used in the study 
of the (corresponding) construction 
scheme addressed in 
\cite{delarueHammersley2022rshe}. 
\end{remark}

\color{black}

 Below, we 
{provide {a priori} results and estimates satisfied by 
solutions to  {\eqref{eq:MKV:3:bb}} (if any). The reader may skip the proofs on a first reading.}
Recalling several results from \cite{delarueHammersley2022rshe}, we first 
obtain the very useful expansion (which we use repeatedly in the paper):
\begin{proposition}\label{prop ito fsquare}
Let Assumptions \ref{ass drift form}, \ref{ass init l2} and \ref{ass drift eu} hold. Then, 
{for $(X_t)_{0 \le t \le T}$ a solution to the (possibly drifted) rearranged SHE {on a certain $[0,T]$} (as in Definition \ref{def:existence})},  with probability 1, {$(X_t)_{0 \le t \le T}$
 takes
 values in $L^2([0,T],H^1_{\rm sym}({\mathbb S}))$}
 and,  for any $t\in [0,T]$,
\begin{equation}
\label{ito:square}
\begin{split} 
	\d \bigl( f_t \lVert X_t \rVert_2^2 \bigr) &= 
	\bigl( 2 f_t  \langle X_t , F(X_t) \rangle 
	  -2 f_t \lVert \nabla_xX_t\rVert_2^2
	   + c_\lambda f_t
	+ \dot{f}_t \|X_t \|_2^2 \bigr) \d t  
	  +2 f_t \langle X_t  , \d W_t \rangle,  
	 \end{split}
\end{equation}
where $f$ is any differentiable function from $[0,\infty)$ to ${\mathbb R}$, with 
$(\dot{f}_t)_{t \geq 0}$ as derivative, and 
$c_\lambda$ is given by 
\eqref{eq colouring}. 
\end{proposition}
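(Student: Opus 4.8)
The plan is to diagonalise the equation in the cosine Fourier basis $(e_m)_{m\in{\mathbb N}_0}$ of $L^2_{\rm sym}({\mathbb S})$, apply the one–dimensional Itô formula to each squared Fourier coefficient multiplied by $f$, and sum over $m$. Writing $\hat X^m_t:=\langle X_t,e_m\rangle$, so that $\|X_t\|_2^2=\sum_{m\in{\mathbb N}_0}(\hat X^m_t)^2$, the only genuinely delicate point is that the reflection term $\eta$ must contribute nothing to the expansion; the rest is bookkeeping, together with two a priori ingredients borrowed from \cite{delarueHammersley2022rshe}: that under Assumptions \ref{ass drift form}, \ref{ass init l2} and \ref{ass drift eu} a solution satisfies $\mathbb{E}[\sup_{t\le T}\|X_t\|_2^{p}]<\infty$ for all $p\ge1$, and the parabolic smoothing estimate $\mathbb{E}[\int_0^T\|\nabla_xX_t\|_2^2\,\d t]<\infty$, the latter being what makes the term $\|\nabla_xX_t\|_2^2$ in \eqref{ito:square} meaningful.

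First I would fix $m$ and test \eqref{def RSHE 1} against $u=e_m\in H^2_{\rm sym}({\mathbb S})$. Using $\Delta e_m=-(2\pi m)^2e_m$ and $\langle W_t,e_m\rangle=\lambda_m\beta^m_t$ (from \eqref{def:tilde:noise}), this yields the real semimartingale
\begin{equation*}
\hat X^m_t=\hat X^m_0+\int_0^t\bigl(\langle F(X_r),e_m\rangle-(2\pi m)^2\hat X^m_r\bigr)\,\d r+\lambda_m\beta^m_t+\langle\eta_t,e_m\rangle ,
\end{equation*}
whose finite–variation part is well defined: the time integral is absolutely continuous (its integrand being locally bounded thanks to \eqref{eq bd4der:1} and path continuity of $\|X_\cdot\|_2$), and, decomposing $e_m=e_m^+-e_m^-$ as a difference of two symmetric non–increasing functions (as in Definition \ref{def:integral:RS}), both $t\mapsto\langle\eta_t,e_m^\pm\rangle$ are non–decreasing by the monotonicity property of $\eta$ (item (2) of Definition \ref{def:existence}), so $\langle\eta_\cdot,e_m\rangle$ has finite variation. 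The bracket of $\hat X^m$ is $\lambda_m^2\,\d t$; hence the one–dimensional Itô formula applied to $(\hat X^m_t)^2$, combined with the product rule against the $C^1$ (so finite–variation) function $f$, gives
\begin{equation*}
\begin{split}
\d\bigl(f_t(\hat X^m_t)^2\bigr) &= \Bigl(2f_t\hat X^m_t\langle F(X_t),e_m\rangle-2(2\pi m)^2f_t(\hat X^m_t)^2+\lambda_m^2f_t+\dot f_t(\hat X^m_t)^2\Bigr)\,\d t \\
&\quad +2f_t\lambda_m\hat X^m_t\,\d\beta^m_t+2f_t\hat X^m_t\,\d\langle\eta_t,e_m\rangle .
\end{split}
\end{equation*}

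Next I would sum over $m=0,\dots,M$ and let $M\to\infty$. By Parseval, $\sum_m\hat X^m_t\langle F(X_t),e_m\rangle=\langle X_t,F(X_t)\rangle$ (here $F(X_t)\in L^2_{\rm sym}({\mathbb S})$ by Assumption \ref{ass drift form}), $\sum_m(2\pi m)^2(\hat X^m_t)^2=\|\nabla_xX_t\|_2^2$, $\sum_m\lambda_m^2=c_\lambda$, and $\sum_m\lambda_m\hat X^m_t\,\d\beta^m_t=\langle X_t,\d W_t\rangle$. Interchanging the infinite sum with the $\d t$– and $\d\beta$–integrations is justified by the moment and $H^1$–estimates above (a routine localisation/dominated–convergence argument, using $\lambda_m\le1$ for $L^2(\Omega)$–convergence of the truncated stochastic integrals; alternatively one cites the analogous bounds in \cite{delarueHammersley2022rshe}). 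Since all the other contributions in the summed identity converge, so does the residual $2\sum_{m=0}^M\int_0^t f_r\hat X^m_r\,\d\langle\eta_r,e_m\rangle$, and its limit is, by definition, $2\int_0^t f_r\,X_r\cdot\d\eta_r$ in the sense of Definition \ref{def:integral:RS} with $\varepsilon=0$.

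It then remains to show that this residual vanishes, which is the crux. For $\varepsilon>0$ the smoothed field $e^{\varepsilon\Delta}X_r$ is again symmetric non–increasing by Lemma \ref{lem:heatkernelisdecreasing}, so, by the monotonicity of $\eta$, $\int_0^t e^{\varepsilon\Delta}X_r\cdot\d\eta_r\ge0$ pathwise; by \eqref{def RSHE 2} its expectation tends to $0$ as $\varepsilon\searrow0$, hence along a subsequence it converges to $0$ almost surely. On the other hand, since $\langle e^{\varepsilon\Delta}X_r,e_m\rangle=e^{-\varepsilon(2\pi m)^2}\hat X^m_r$ and the series $\sum_m\int_0^t\hat X^m_r\,\d\langle\eta_r,e_m\rangle$ converges by the previous step, an Abelian summation argument gives $\int_0^t e^{\varepsilon\Delta}X_r\cdot\d\eta_r\to\int_0^t X_r\cdot\d\eta_r$ as $\varepsilon\searrow0$; therefore $\int_0^t X_r\cdot\d\eta_r=0$ a.s. for each $t$, and incorporating the bounded weight $f$ (which does not affect the argument, the time–integrator being the same vector measure) also kills $\int_0^t f_r\,X_r\cdot\d\eta_r$. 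Pathwise continuity in $t$ then upgrades this to a single null set and yields \eqref{ito:square}. The main obstacle is precisely this final step — passing to the limit $\varepsilon\searrow0$ inside the path–by–path reflection integral and reconciling the mode–wise identities with \eqref{def RSHE 2} — and it is here that the fine analysis of the reflection term developed for the undrifted equation in \cite{delarueHammersley2022rshe} is invoked; the drift $F$ itself is inert, entering only through the harmless term $2f_t\langle X_t,F(X_t)\rangle$.
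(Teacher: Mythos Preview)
Your proof is correct and follows essentially the same strategy as the paper's --- diagonalising in the cosine basis, applying the one-dimensional It\^o formula mode by mode, summing, and eliminating the reflection contribution via the minimality condition \eqref{def RSHE 2} --- but with a small reordering worth noting. The paper applies It\^o to the \emph{mollified} coefficients $f_t\langle e^{\varepsilon\Delta}e_m,X_t\rangle^2$ from the outset, sums over $m$ (all sums being absolutely convergent thanks to the exponential weights), and only then sends $\varepsilon\searrow0$; in that ordering the reflection contribution after summation is directly $2\int_0^t f_r\langle e^{2\varepsilon\Delta}X_r,\d\eta_r\rangle$, which is precisely the object controlled by item~4 of Definition~\ref{def:existence} (strengthened to an almost sure $\liminf$ statement as in \cite[Proposition~4.11]{delarueHammersley2022rshe}). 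You instead work with the unsmoothed coefficients, deduce convergence of the reflection sum indirectly (as the residual once everything else converges), and then bring in the mollification only at the end via an Abelian argument. Both routes land in the same place; the paper's ordering is a bit more economical because it avoids having to separately justify the existence of $\int_0^t X_r\cdot\d\eta_r$ at $\varepsilon=0$ and the Abel-type interchange. Your remark that the drift $F$ is inert --- entering only through the harmless term $2f_t\langle X_t,F(X_t)\rangle$ --- is exactly right and matches the paper's perspective.
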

\begin{proof}
The proof follows from a combination of arguments found in \cite{delarueHammersley2022rshe}. Namely, for each $m\in\bN_0$ and arbitrary $\varepsilon >0$, one applies the classical Itô formula to $f_t(\langle e^{\varepsilon\Delta}e_m,X_t\rangle)^2$. Then, as in the proof of 
{Corollary 4.12} in \cite{delarueHammersley2022rshe}, one may sum over the Fourier modes and remove the constant $\varepsilon$. This yields,
\begin{align} 
\label{eq:ito:edeltaX2}
&\| e^{\varepsilon \Delta} X_t \|_2^2
- 
\| e^{\varepsilon \Delta} X_0 \|_2^2
+ \int_0^t \| e^{\varepsilon \Delta} \nabla_x X_s \|_2^2 \d s
\\
&= 2 \int_0^t \langle F(X_s), e^{2 \varepsilon \Delta} X_s \rangle  \d s 
+  \| e^{\varepsilon \Delta} W_t \|_2^2 + 2 \int_0^t \langle e^{2\varepsilon \Delta} X_s, 
\d \eta_s \rangle
+ 2 \int_0^t \langle e^{2 \varepsilon \Delta} X_s, \d W_s \rangle, \nonumber
\end{align} 
for $t \geq 0$. One then concludes by letting $\varepsilon$ tend to $0$, using 
positivity of the integral with respect to $\eta$ (item 2 in Definition \ref{def:existence}) 
together with item 4 in Definition 
\ref{def:existence}  
\color{black}
\end{proof}
  
\begin{proposition}\label{prop zeroMode}
Let Assumptions \ref{ass drift form}, \ref{ass init l2} and \ref{ass drift eu} hold. For any $\delta >0$
and for $\alpha =  { \sqrt{c_F}} + 4 \delta$, it holds, 
for 	$(X_t)_{0 \le t \le T}$ a solution to the (possibly drifted) rearranged SHE {on an interval $[0,T]$} (as in Definition \ref{def:existence}), {with probability 1} 
and
for any $0 \leq t_1 \leq  t_2 \leq T$,
\begin{equation*} 
\begin{split}
&{\mathbb E} \biggl[ \exp \biggl( 2 \delta  \int_{t_1}^{t_2}
\exp(-{2} \alpha s) \Bigl[ {4 \delta} \| X_s \|_2^2 + \| \nabla_x X_s \|_2^2 \Bigr] \d s \biggr) 
 \, \vert \, {\mathcal F}_{t_1}\biggr]
\\
&\hspace{15pt} 
 \leq  \exp\Bigl(  2 \bigl( c_\lambda 
	+
	 {2 \sqrt{ c_F}}
	\bigr)  \delta (t_2-t_1) \Bigr)   \exp \bigl( 2 \delta   \| X_{t_1} \|^2_2 
 \bigr).
 \end{split}
\end{equation*} 
\end{proposition}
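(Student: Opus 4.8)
\textit{Proof plan.} The idea is to feed the It\^o expansion of Proposition~\ref{prop ito fsquare} into a Dol\'eans--Dade supermartingale. Fix $\alpha := c_F + 4\delta$ and apply \eqref{ito:square} with $f_t := e^{-2\alpha t}$ (so $\dot f_t = -2\alpha f_t$). Writing $Y_t := e^{-2\alpha t}\lVert X_t\rVert_2^2$ and
\[
m_t := 2\int_0^t e^{-2\alpha s}\langle X_s,\d W_s\rangle = 2\int_0^t e^{-2\alpha s}\sum_{k\in\bN_0}\lambda_k\hat X_s^k\,\d\beta_s^k ,
\]
we obtain $\d Y_t = b_t\,\d t + \d m_t$ with $b_t = e^{-2\alpha t}\bigl(2\langle X_t,F(X_t)\rangle - 2\lVert\nabla_x X_t\rVert_2^2 + c_\lambda - 2\alpha\lVert X_t\rVert_2^2\bigr)$ and $\d\llangle m\rrangle_t = 4e^{-4\alpha t}\sum_{k}\lambda_k^2(\hat X_t^k)^2\,\d t$. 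Because $t\mapsto X_t$ is continuous with values in $L^2(\bS)$, this bracket is almost surely locally finite, so $m$ is a continuous local martingale and the process $t\mapsto\cE_t(2\delta m)$ is a well-defined nonnegative local martingale, hence a supermartingale. In particular $\E\bigl[\cE_{t_2}(2\delta m)/\cE_{t_1}(2\delta m)\mid\cF_{t_1}\bigr]\le 1$ (no Novikov condition is needed here).

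Substituting $2\delta m_t = 2\delta(Y_t - Y_0) - 2\delta\int_0^t b_s\,\d s$ into $\cE_t(2\delta m)=\exp(2\delta m_t - 2\delta^2\llangle m\rrangle_t)$ gives
\[
\frac{\cE_{t_2}(2\delta m)}{\cE_{t_1}(2\delta m)} = e^{2\delta(Y_{t_2}-Y_{t_1})}\exp\!\Bigl(-2\delta\!\int_{t_1}^{t_2}\! b_s\,\d s - 2\delta^2\bigl(\llangle m\rrangle_{t_2}-\llangle m\rrangle_{t_1}\bigr)\Bigr).
\]
Bounding $e^{2\delta Y_{t_2}}\ge 1$ (as $Y_{t_2}\ge 0$), pulling the $\cF_{t_1}$-measurable factor $e^{-2\delta Y_{t_1}}$ out of the conditional expectation, and using $Y_{t_1}=e^{-2\alpha t_1}\lVert X_{t_1}\rVert_2^2\le\lVert X_{t_1}\rVert_2^2$, the supermartingale bound becomes
\[
\E\!\left[\exp\!\left(\int_{t_1}^{t_2}\!\Bigl(-2\delta b_s - 8\delta^2 e^{-4\alpha s}\!\sum_{k}\lambda_k^2(\hat X_s^k)^2\Bigr)\d s\right)\,\Bigg|\,\cF_{t_1}\right]\le e^{2\delta\lVert X_{t_1}\rVert_2^2}.
\]
It therefore suffices to show that the integrand of the exponent dominates $2\delta e^{-2\alpha s}\bigl[(\alpha-c_F)\lVert X_s\rVert_2^2 + \lVert\nabla_x X_s\rVert_2^2\bigr]$ up to an additive error whose integral over $[t_1,t_2]$ is at most $c_\lambda\delta(t_2-t_1)$.

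This last point is the only genuine computation. Expanding $-2\delta b_s$, the integrand equals $e^{-2\alpha s}\bigl(4\delta\alpha\lVert X_s\rVert_2^2 - 4\delta\langle X_s,F(X_s)\rangle + 4\delta\lVert\nabla_x X_s\rVert_2^2 - 2\delta c_\lambda\bigr) - 8\delta^2 e^{-4\alpha s}\sum_{k}\lambda_k^2(\hat X_s^k)^2$. One controls the cross term by Cauchy--Schwarz, Young's inequality and the growth bound \eqref{eq bd4der:1}, via $|\langle X_s,F(X_s)\rangle|\le\lVert X_s\rVert_2\sqrt{c_F(1+\lVert X_s\rVert_2^2)}$, and uses $\sum_{k}\lambda_k^2(\hat X_s^k)^2\le\lVert X_s\rVert_2^2$ together with $e^{-4\alpha s}\le e^{-2\alpha s}\le 1$. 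The choice $\alpha - c_F = 4\delta$ is precisely what gives the dissipative term $4\delta\alpha\lVert X_s\rVert_2^2$ enough room to absorb simultaneously the bracket correction ($\le 8\delta^2\lVert X_s\rVert_2^2$) and the $\lVert X_s\rVert_2^2$-part of the Young splitting of the drift term, while still leaving the target term $2\delta(\alpha-c_F)\lVert X_s\rVert_2^2$; the term $4\delta\lVert\nabla_x X_s\rVert_2^2$ trivially covers $2\delta\lVert\nabla_x X_s\rVert_2^2$; and the leftover constants (the $-2\delta c_\lambda$ term and the constant produced by Young, arguing with $c_F$ replaced by $1$ in the Young step if $c_F<1$) integrate against $e^{-2\alpha s}\le 1$ to contribute at most $c_\lambda\delta(t_2-t_1)$. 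Substituting this lower bound back yields the conditional estimate $\E[\,\cdot\mid\cF_{t_1}]\le e^{c_\lambda\delta(t_2-t_1)}e^{2\delta\lVert X_{t_1}\rVert_2^2}$, and the displayed inequality of the statement follows upon taking expectations. The main obstacle is thus the bookkeeping in this drift estimate --- in particular, tuning the Young parameter so that both the drift cross term and the quadratic-variation correction are swallowed by $4\delta\alpha\lVert X_s\rVert_2^2$ --- together with the routine-but-necessary check that $\cE(2\delta m)$ is a bona fide supermartingale (which is where the continuity of $X$ in $L^2$ enters).
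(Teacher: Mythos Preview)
Your proposal is correct and follows essentially the same route as the paper: apply Proposition~\ref{prop ito fsquare} with $f_t=e^{-2\alpha t}$, then control the stochastic integral through its exponential martingale. The only technical difference lies in that last step. You invoke the supermartingale property of $\cE(2\delta m)$ directly and read off the bound; the paper instead uses the Cauchy--Schwarz estimate $\E[e^{2\delta M_{t\wedge\tau}}\mid\cF_0]\le\E[e^{8\delta^2\llangle M\rrangle_{t\wedge\tau}}\mid\cF_0]^{1/2}$, observes that with $\alpha-c_F=4\delta$ the $8\delta^2\llangle M\rrangle$-term is dominated pointwise by the integral already sitting on the left-hand side, and then closes the resulting inequality $A\le C A^{1/2}$ after localizing by a stopping time $\tau$ to guarantee $A<\infty$. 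Your route avoids both the localization and the closure step, so it is slightly cleaner; the paper's route has the advantage that the quadratic-variation correction is moved to the other side rather than kept inside the exponent, which makes the role of the choice $\alpha=c_F+4\delta$ more transparent. Both arguments share the same looseness in the drift bookkeeping: the bound $\langle X,F(X)\rangle\le c_F\|X\|_2^2$ that the paper writes explicitly (and that you reach via Young) does not follow from $\|F(X)\|_2^2\le c_F(1+\|X\|_2^2)$ with exactly that constant, so strictly speaking the exponent in the final estimate picks up a $c_F$-dependent contribution in addition to $c_\lambda$; this affects neither the structure of the argument nor any downstream use of the proposition.
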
 
\begin{proof}
Without any loss of generality, assume 
$t_1=0$. 
We then apply \eqref{ito:square} with $f_t =  \exp(- {2} \alpha t)$ for some $\alpha>0$. 
Using the bound $\langle u,F(u) \rangle \leq \sqrt{c_F} \| u \|_2 \sqrt{1+ \| u \|_2^2} \leq 
\sqrt{c_F}(1+\| u \|^2)$, we obtain 
\begin{align}
\label{eq:ito:expansion:polynomial} 
	&\d \bigl( f_t \lVert X_t \rVert_2^2 \bigr) 
	\\
	&{\leq} - 2 f_t \Bigl[ \bigl(  \alpha - 
	{\sqrt{c_F}} \bigr) \| X_t \|_2^2 + \| \nabla_x X_t \|_2^2 \Bigr] \d t + 
	\Bigl( c_\lambda 
	+
	{2  \sqrt{c_F}}
	\Bigr) 
	f_t \d t + 2 f_t \langle X_t  , \d W_t \rangle. \nonumber
\end{align}
Integrating in time from $t \wedge \tau$, for an arbitrary stopping time $\tau$, and then exponentiating, we deduce that, for any $\delta >0$,  
\begin{equation} 
\label{eq:ito:square:ft}
\begin{split}
&{\mathbb E} \biggl[ \exp \biggl( 2 \delta  \int_0^{t \wedge \tau} 
 f_s \Bigl[ \bigl(  \alpha -    {\sqrt{c_F}}   \bigr) \| X_s \|_2^2 + \| \nabla_x X_s \|_2^2 \Bigr] \d s \biggr) 
 \, \vert \, {\mathcal F}_0
 \biggr]
 \\
 & \leq
 \exp \biggl( 	\Bigl( c_\lambda 
	+
	 {2 \sqrt{ c_F}}
	\Bigr)  \delta
 \int_0^t f_s \d s \biggr)
 \times  
  {\mathbb E} \biggl[ \exp \biggl(  \delta \| X_0 \|^2_2 
 +
 2 \delta \int_0^{t \wedge \tau} f_s \langle X_s, \d W_s \rangle 
 \biggr)
 \biggr].
\end{split}
\end{equation} 
Denoting by $(M_t)_{0 \le t \le T}$ the local martingale 
$(M_t:= \int_0^t f_s \langle X_s,\d W_s \rangle_2)_{0 \le t \le T}$, we recall the very standard estimate:
\begin{equation*} 
\begin{split}
{\mathbb E} \bigl[ \exp\bigl( 2 \delta M_{t \wedge \tau} \bigr) \, \vert \, {\mathcal F}_0 \bigr] 
&=
{\mathbb E} \bigl[ \exp\bigl( 2 \delta M_{t \wedge \tau} - 4 \delta^2 {\llangle M \rrangle}_{t \wedge \tau} \bigr) \exp\bigl( 4 \delta^2  {\llangle M \rrangle}_{t \wedge \tau} \bigr) \, \vert \, {\mathcal F}_0 \bigr] 
\\
&\leq 
{\mathbb E} \bigl[ \exp\bigl( 4 \delta M_{t \wedge \tau} - 8 \delta^2 {\llangle M \rrangle}_{t \wedge \tau} \bigr)  \, \vert \, {\mathcal F}_0 \bigr]^{1/2} 
{\mathbb E} \bigl[
\exp\bigl( 8 \delta^2 {\llangle M \rrangle}_{t \wedge \tau} \bigr) \, \vert \, {\mathcal F}_0 \bigr]^{1/2}
\\
&\leq {\mathbb E} \bigl[
\exp\bigl( 8 \delta^2 {\llangle M \rrangle}_{t \wedge \tau} \bigr) \, \vert \, {\mathcal F}_0 \bigr]^{1/2},
\end{split} 
\end{equation*} 
where $({\llangle M \rrangle}_t)_{0 \le t \le T}$ is the bracket process of $(M_t)_{0 \le t \le T}$ and can be upper bounded by 
\begin{equation*} 
{\llangle M \rrangle}_t \leq  
 \int_0^t f^2_s \| X_s \|_2^2 \d s. 
\end{equation*} 
Inserting into 
\eqref{eq:ito:square:ft}, we obtain 
\begin{equation*} 
\begin{split}
&{\mathbb E} \biggl[ \exp \biggl( 2 \delta  \int_0^{t \wedge \tau} 
 f_s \Bigl[ \bigl(  \alpha -    {\sqrt{ c_F}}  \bigr) \| X_s \|_2^2 + \| \nabla_x X_s \|_2^2 \Bigr] \d s \biggr) 
\, \vert \, {\mathcal F}_0 \biggr]
 \\
 & \leq \exp \Bigl( \bigl( c_\lambda 
	+
	 {2 \sqrt{c_F}}
	\bigr) \delta t \Bigr)   \exp \bigl(  \delta  \| X_0 \|^2_2 
 \bigr) 
 {\mathbb E} \biggl[ 
 \exp \biggl(
 8\delta^2 \int_0^{t \wedge \tau} f_s \| X_s \|^2_2 \d s
 \biggr)
 \, \vert \, {\mathcal F}_0
 \biggr]^{1/2}.
 \end{split}
\end{equation*} 
Setting now $\alpha = { \sqrt{c_F}}+ 4 \delta$ and then for $\tau$ a localising stopping time that guarantees that the right-hand side is finite, we easily get a bound for the left-hand side. Letting $\tau$ tend to $\infty$ (which is possible), 
we complete the proof. 
\end{proof}
 
\subsection{Weak Existence and Uniqueness}
\begin{theorem}
	\label{thm eu w drift}
	Let assumptions \ref{ass drift form}, \ref{ass init l2}, and \ref{ass drift eu} hold. Then, 
	there exists a probability space as in 
	 Definition \ref{def:existence} with $X_0\sim \mu_0$ and a
	 pair 
	  $(X,\eta)$ solving (on this probability space) the drifted rearranged stochastic heat equation \eqref{def:existence} {on $[0,\infty)$}.
	  The law of this pair on ${\mathcal C}([0,\infty),U^2({\mathbb S})) \times 
	  {\mathcal C}([0,\infty),H^{-2}_{\rm sym}({\mathbb S}))$ is unique. 
	  
Moreover, for any $T>0$, 
 the laws (on ${\mathcal C}([0,T],U^2({\mathbb S})  {\times H_{\rm sym}^{-2}({\mathbb S})} )$) of the solutions 
to 
the undrifted and 
drifted versions of 
\eqref{def RSHE 1}, with 
the same initial condition (satisfying Assumption \ref{ass init l2}), are equivalent. \color{black} 
\end{theorem}
 
The proof is divided in several steps. It relies on several useful notations. For a fixed $T>0$, we denote by 
$\Omega_{T}^0 := {\mathcal C}([0,T], U^2({\mathbb S}) \times H^{-2}_{\rm sym}({\mathbb S}))$ the canonical space for carrying a solution up to time $T$ and we equip it with the Borel $\sigma$-field. 
When $T=\infty$, we just write 
$\Omega^0$. 
The canonical filtration is denoted ${\mathbb F}^0 = ({\mathcal F}_t)_{0 \leq t \leq T}$ (with $T$ being possibly equal to $\infty$). 
Similarly, the canonical process is denoted 
$(X_t^0,\eta^0_t)_{0 \le t \le T}$. We then let: 
\begin{equation*}
\begin{split}
&\beta_t^m(X^0,\eta^0) := \lambda_m^{-1} \Bigl( \langle X_t^0,e_m \rangle - \langle X_0^0,e_m \rangle + 4 \pi^2 m^2 \int_0^t \langle X_s^0,e_m \rangle \d s - \langle \eta_t ,e_m \rangle \Bigr), \quad m \in {\mathbb N}_0,  
 \\
 &W_t(X^0,\eta^0) := \sum_{m \in \bN_0} \lambda_m \beta_t^m(X^0,\eta^0) e_m,
 \end{split} 
 \end{equation*} 
 and 
 \begin{equation*} 
 Z_{{t}}^\pm(X^0,\eta^0) := 
 \cE_{{t}}\left\{\pm \int_0^\cdot\sum_{m\in \bN_0}\lambda_m^{-1}\langle F(X_s^0),e_m\rangle \d \beta^m_s(X^0,\eta^0)   \right\},
 \end{equation*} 
 with $\pm$ being understood as either 
 $+$ or $-$.

\begin{lemma}
\label{lem:weak uniqueness}
Let assumptions \ref{ass drift form}, \ref{ass init l2} and \ref{ass drift eu} hold. Then,
there exists $T_0>0$ such that, for any probability ${\mathbb P}$ on the space ${\Omega}^0$ under which the canonical process 
is a solution to 
\eqref{def RSHE 1}
in the sense of 
	 Definition \ref{def:existence} and for any $t \geq s \geq 0$ with $t-s \leq T_0$,
\begin{equation*} 
\quad {\mathbb E}^0 \bigg[ \frac{Z^\pm_{t}(X^0,\eta^0)}{Z^\pm_{s}(X^0,\eta^0)} \, \vert \, {\mathcal F}_s \bigg] = 1,
\end{equation*} 
 with the two symbols $\pm$ being understood as either 
 $+$ or $-$ (and taking the same value). 
\end{lemma}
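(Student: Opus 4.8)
The plan is to observe that, by definition, $Z^\pm=\cE_\cdot(\pm N)$ is the Dol\'eans-Dade exponential of the continuous local martingale
\[
 N_\cdot \ :=\ \int_0^\cdot \sum_{m\in\bN_0}\lambda_m^{-1}\,\langle F(X_r^0),e_m\rangle\,\d\beta_r^m(X^0,\eta^0),
 \qquad
 {\llangle N\rrangle}_t \ =\ \int_0^t \sum_{m\in\bN_0}\lambda_m^{-2}\,\langle F(X_r^0),e_m\rangle^2\,\d r
\]
(the $\beta^m(X^0,\eta^0)$ being independent $\bE^0$-Brownian motions by the definition of a solution), and then to establish the identity by a localised Novikov-type argument. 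Setting $\widetilde N:=N_{\cdot\vee s}-N_s$ for the local martingale that vanishes on $[0,s]$ and coincides with $N_\cdot-N_s$ afterwards, one has for $t\ge s$ that $Z^\pm_t/Z^\pm_s=\cE_t(\pm\widetilde N)$ with ${\llangle \widetilde N\rrangle}_t={\llangle N\rrangle}_t-{\llangle N\rrangle}_s$; since $\cE_\cdot(\pm\widetilde N)$ is a non-negative local martingale, hence a supermartingale started from $1$, I would reduce the statement (for either sign) to producing some $\varepsilon>1/2$ with $\bE^0[\exp(\varepsilon({\llangle N\rrangle}_t-{\llangle N\rrangle}_s))\mid\cF_s]<\infty$ a.s.\ whenever $t-s\le T_0$. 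Indeed, stopping $\widetilde N$ at $\sigma_n:=\inf\{u\ge s:{\llangle N\rrangle}_u-{\llangle N\rrangle}_s\ge n\}$ gives a bounded bracket, so $\bE^0[\cE_t(\pm\widetilde N^{\sigma_n})\mid\cF_s]=1$, and the conditional exponential bound, fed into a conditional H\"older estimate applied to $\cE_t(\pm\widetilde N^{\sigma_n})^p$ for $p>1$ close to $1$, produces a uniform-in-$n$ conditional $L^p$-bound; this upgrades the a.s.\ convergence $\cE_t(\pm\widetilde N^{\sigma_n})\to Z^\pm_t/Z^\pm_s$ (legitimate since ${\llangle N\rrangle}_t<\infty$ a.s.) to $\bE^0[Z^\pm_t/Z^\pm_s\mid\cF_s]=1$.

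The substantive step is controlling the bracket increment. I would use the colouring decay \eqref{eq colouring}, namely $\lambda_m^{-2}\le C(m\vee1)^{2\lambda}$ with $\lambda<1$, to write $\sum_m\lambda_m^{-2}\langle g,e_m\rangle^2\le C\sum_m(m\vee1)^{2\lambda}\hat g_m^2$, and then the elementary interpolation $\sum_m(m\vee1)^{2\lambda}\hat g_m^2\le(\sum_m\hat g_m^2)^{1-\lambda}(\sum_m(m\vee1)^2\hat g_m^2)^{\lambda}\le\sum_m\hat g_m^2+\sum_m(m\vee1)^2\hat g_m^2\le C(\|g\|_2^2+\|\nabla_x g\|_2^2)$, the right-hand side being finite precisely because $\lambda<1$. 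Taking $g=F(X_r^0)$ and invoking the growth Assumption \ref{ass drift eu} then yields $\sum_m\lambda_m^{-2}\langle F(X_r^0),e_m\rangle^2\le C_F'(1+\|X_r^0\|_2^2+\|\nabla_x X_r^0\|_2^2)$; in particular ${\llangle N\rrangle}_t\le C_F'\int_0^t(1+\|X_r^0\|_2^2+\|\nabla_x X_r^0\|_2^2)\,\d r<\infty$ a.s.\ (because $r\mapsto X_r^0$ is continuous with values in $U^2(\bS)\subset L^2(\bS)$ and $\int_0^t\|\nabla_x X_r^0\|_2^2\,\d r<\infty$), so $N$ is genuinely a continuous local martingale and $Z^\pm$ is well defined.

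To produce the finite conditional exponential moment I would invoke Proposition \ref{prop zeroMode}, but re-anchored at the generic time $s$: re-running its proof on $[s,t]$ with the time-shifted weight $f_r=\exp(-2\alpha(r-s))$ (equivalently, restarting the solution at $s$) yields, for $\alpha=c_F+4\delta$, the analogue of its estimate over $[s,t]$ in which the weight is bounded below by $e^{-2\alpha T_0}$ on the whole interval and the right-hand side involves only $\exp(c_\lambda\delta(t-s))\exp(c\delta\|X_s^0\|_2^2)$, which is $\cF_s$-measurable and a.s.\ finite since $X_s^0\in L^2(\bS)$. Combining with the bracket bound, ${\llangle N\rrangle}_t-{\llangle N\rrangle}_s\le C_F'T_0+C_F'\,e^{2\alpha T_0}\min(\alpha-c_F,1)^{-1}\int_s^t e^{-2\alpha(r-s)}\bigl[(\alpha-c_F)\|X_r^0\|_2^2+\|\nabla_x X_r^0\|_2^2\bigr]\d r$, so it suffices to choose $\delta$ and then $T_0$ so that $\varepsilon:=2\delta\,\min(\alpha-c_F,1)\,(C_F'\,e^{2\alpha T_0})^{-1}>1/2$, which is possible because the optimal such $\varepsilon$ (at $\delta\sim1/T_0$) blows up as $T_0\to0$ — and this is exactly what forces $T_0$ to be small. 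I expect the main difficulty to be precisely this reconciliation of the fixed interpolation constant with the threshold $1/2$ of Novikov's criterion as delivered by Proposition \ref{prop zeroMode}, together with the bookkeeping of the localised conditional Novikov argument and of the re-anchoring of that a priori estimate at $s$ rather than at the origin; the identification of $Z^\pm$ as a stochastic exponential is immediate.
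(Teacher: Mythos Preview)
Your proposal is correct and follows essentially the same route as the paper: bound the Novikov bracket $\sum_m\lambda_m^{-2}\langle F(X^0_s),e_m\rangle^2$ by a constant times $1+\|X^0_s\|_2^2+\|\nabla_x X^0_s\|_2^2$ via the colouring decay and the growth assumptions, then feed this into Proposition~\ref{prop zeroMode} on a short interval $[s,t]$ to get the required conditional exponential moment and conclude via Novikov. The only cosmetic differences are that the paper obtains the bracket bound by splitting the Fourier sum at a cutoff $M$ (producing the coefficients $\max_{m\le M}\lambda_m^{-2}$ and $c_{\rm col}^{-2}M^{2(\lambda-1)}$) rather than via your cleaner $H^\lambda$-interpolation, and that the paper simply invokes ``Novikov's condition'' where you spell out the localised conditional-$L^p$ uniform-integrability argument.
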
 

Assume for a while that Lemma
\ref{lem:weak uniqueness}
holds true (the proof is deferred to the end of the section) 
and we shall now deduce 
Theorem \ref{thm eu w drift}. 

\begin{proof}[Proof of Theorem \ref{thm eu w drift}]
We call ${\mathbb P}^0$ the law  on $\Omega^0$ of the (unique) solution to the driftless equation. 
\vskip 4pt

\textit{First Step.} 
We first construct a weak solution on the canonical space $\Omega^0_T$ for a finite $T>0$. 
To do so, 
we let $n= \lceil T/T_0\rceil$ (with $T_0$ as in the statement of Lemma
\ref{lem:weak uniqueness}, applied to the driftless equation and under ${\mathbb P}^0$). 
We then write
\begin{equation*} 
Z_{T}^+(X^0,\eta^0)
=
\prod_{k=1}^{n} \frac{Z^+_{kT/n}(X^0,\eta^0)}{Z^+_{(k-1)T/n}(X^0,\eta^0)},
\end{equation*} 
with the obvious convention that $Z_0(X^0,\eta^0)=1$. 
By successive conditionings, it is straightforward to deduce from Lemma 
\ref{lem:weak uniqueness} that 
\begin{equation*}
{\mathbb E}^{0} 
\bigl[
Z^+_{T}(X^0,\eta^0) \, \vert \, {\mathcal F}_0 \bigr] = 1, 
\end{equation*} 
and then 
\begin{equation*}
{\mathbb E}^{0} 
\bigl[
Z^+_{T}(X^0,\eta^0)  \bigr] = 1.  
\end{equation*} 
This makes it possible to let 
\begin{equation*} 
{\mathbb P}_T := Z^+_T(X^0,\eta^0) \cdot {\mathbb P}^0.
\end{equation*}  
It is standard to check that that the canonical process satisfies items 1, 2 and 3 of Definition \ref{def:existence} since the defining characteristics of the reflection process remain unaltered by the Girsanov transformation.
 
\vskip 4pt

\textit{Second Step.} We now establish item 4 in 
Definition \ref{def:existence}. 
Of course we already have item 4 under ${\mathbb P}^0$. It says in particular that 
\begin{equation*} 
 \int_0^{T} \bigl\langle e^{\varepsilon  \Delta} X_s^0, \d \eta_s^0 \bigr\rangle  
\end{equation*} 
tends to $0$ in ${\mathbb P}^0$-probability as $\varepsilon$ tends to $0$. Since 
${\mathbb P}_T$ is absolutely continuous with respect to ${\mathbb P}^0$, the same holds under 
${\mathbb P}_T$. 
 
Generally speaking, item 4 then follows from the same proof as in 
 \cite[Propositions 4.11 and 4.16]{delarueHammersley2022rshe}. Here is a sketch of it. By expanding (in time) 
$\| e^{\varepsilon \Delta}X^0_t \|^2$ under ${\mathbb P}$, we obtain an identity similar to  
\eqref{eq:ito:edeltaX2}:
\begin{equation*} 
\begin{split} 
&\| e^{\varepsilon \Delta} X_t^0 \|_2^2
- 
\| e^{\varepsilon \Delta} X_0^0 \|_2^2
+ \int_0^t \| e^{\varepsilon \Delta} \nabla_x X_s^0 \|_2^2 \d s
\\
&= 2 \int_0^t \langle F(X_s^0), e^{2 \varepsilon \Delta} X_s^0 \rangle  \d s 
+  \| e^{\varepsilon \Delta} W_t \|_2^2 + 2 \int_0^t \langle e^{2\varepsilon \Delta} X_s^0, 
\d \eta_s^0 \rangle
+ 2 \int_0^t \langle e^{2 \varepsilon \Delta} X_s^0, \d W_s \rangle,
\end{split}
\end{equation*} 
for $t \in [0,T]$ (and with $W$ being a shorter notation for $W(X^0,\eta^0)$). Raising to the power $p$, for an arbitrary real $p >1$, it is possible to upper bound 
\begin{equation*} 
{\mathbb E} 
\biggl[ \biggl\vert  \int_0^{T} \langle e^{2\varepsilon \Delta} X_s^0, \d \eta_s^0 \rangle \biggr\vert^p \biggr]
\end{equation*} 
in terms of $\sup_{0 \leq t \leq T} {\mathbb E} [ \| X_t^0 \|_2^{2p}]$ and ${\mathbb E} [( \int_0^{T} 
 \| e^{\varepsilon \Delta} \nabla_x X_s^0 \|_2^2 \d s)^p]$. By
 \eqref{eq:ito:expansion:polynomial} 
 (choosing 
 $\alpha  >  {\max(1, c_F)}$), we can obtain a bound for the above term. 
Together with 
the fact that we already have convergence in ${\mathbb P}$-probability, we deduce by uniform integrability that 
item 4 is satisfied. \color{black}
\vskip 4pt

\textit{Third Step.}
Now that the canonical process satisfies Definition 
\ref{def:existence} 
under ${\mathbb P}_T$, it is standard to construct a probability ${\mathbb P}$ on $\Omega^0$ such that 
${\mathbb P}(A) = {\mathbb P}_T(A)$ for any $A \in {\mathcal F}_T^0$ (see for instance \cite[Corollary 5.2, Chapter 3]{MR1121940}). 
Under ${\mathbb P}$, the canonical process is a weak solution. 

It remains to prove uniqueness in law of such a solution. 
Assume that we are given two probability spaces $(\Omega^i,{\mathcal A}^i,{\mathbb F}^i,{\mathbb P}^i)$, $i=1,2$, 
equipped each with a solution to 
the drifted version of \eqref{def:existence}. 
We can easily transport these two solutions onto the canonical space and thus assume that 
${\mathbb P}^1$ and ${\mathbb P}^2$ are probabilities on $\Omega^0$. Proceeding as in 
the first step, we deduce from 
Lemma 
\ref{lem:weak uniqueness}
that, for any $T>0$,  
\begin{equation*}
{\mathbb E}^{i} 
\bigl[
Z^-_{T}(X^0,\eta^0)  \bigr] = 1, \quad i=1,2. 
\end{equation*} 
Repeating the second step, 
we obtain that, under the two probability measures, 
\begin{equation*}
Z^-_{T}(X^0,\eta^0) \cdot {\mathbb P}^i, \quad i =1,2,
\end{equation*} 
the canonical process $(X_t^0,\eta_t^0)_{0 \le t \le T}$ solves the undrifted
equation. By pathwise uniqueness for the latter, we deduce that 
the above two probability measures coincide (on ${\mathcal F}_T^0$). 
This shows that ${\mathbb P}^1$ and ${\mathbb P}^2$ are equal. 
\end{proof}

  \subsection{Proof of Auxiliary Lemma  \ref{lem:weak uniqueness}}
   
\begin{proof}[Proof of Lemma \ref{lem:weak uniqueness}]
 On $(\Omega,\cF,\bP)$, we are given a solution $(X^0_t,\eta^0_t)_{0 \leq t \leq T}$ to the driftless rearranged stochastic heat equation up to time $T$ in the sense of Definition \ref{def:existence} with $X_0^0\sim \mu_0$ as initial condition. 
For an integer $M \geq M_{\textrm{\rm col}}$ and two reals $t_1,t_2 \in [0,T]$, 
with $t_1 \le t_2$,
\begin{align*}
		&   \exp\left\{ \tfrac{1}{2} \int_{t_1}^{t_2}\sum_{m\geq 0}\lambda_m^{-2}\langle F(X^0_s),e_m\rangle^2\d s \right\} \\
		&=    \exp\left\{ \tfrac{1}{2} \int_{t_1}^{t_2}\left( \sum_{m\leq M}+\sum_{m>M}\right)\lambda_m^{-2}\langle F(X^0_s),e_m\rangle^2\d s \right\}  \\
		&\leq   \exp\left\{ \tfrac{1}{2} \int_{t_1}^{t_2} 
		\biggl( {\max_{m=1,\cdots,M} 
		\lambda_m^{-2}}
		\sum_{m\geq 0}\langle F(X^0_s),e_m\rangle^2 + 
		{c_{\textrm{\rm col}}^{-2} M^{2(\lambda-1)}}
		\sum_{m>M} m^2\langle F(X^0_s),e_m\rangle^2 \biggr) \d s \right\}  \\		
		&=   \exp\left\{   \tfrac{1}{2} {\max_{m=1,\cdots,M} \lambda_m^{-2}}
		\int_{t_1}^{t_2} \lVert  F(X^0_s)\rVert_2^2 \d s+ \tfrac{1}{2} {c_{\textrm{\rm col}}^{-2}} M^{2(\lambda-1)}\int_{t_1}^{t_2} \bigl\lVert \nabla_x \bigl( F(X^0_s) \bigr)   \bigr\rVert_{2}^2 \d s \right\}.
	\end{align*}
The  inequality above follows from the colouring condition \eqref{eq colouring}. Now, one applies the growth conditions \eqref{eq bd4der}, 
\begin{align}
		 &   \exp\left\{ \tfrac{1}{2}
		  {\max_{m=1,\cdots,M} \lambda_m^{-2}}
		 \int_{t_1}^{t_2} \lVert  F(X_s^0)\rVert_2^2 \d s+ \tfrac{1}{2}  {c_{\textrm{\rm col}}^{-2}} M^{2(\lambda-1)}\int_{t_1}^{t_2} 
		 \bigl\lVert \nabla_x \bigl( F(X_s^0) \bigr)   \bigr\rVert_{2}^2 \d s \right\}  
		 \nonumber
		 \\
		 &\leq    \exp\left\{ \tfrac{1}{2}
		 {\max_{m=1,\cdots,M} \lambda_m^{-2}}
		 \int_{t_1}^{t_2} c_F\left( 1+\lVert X_s^0 \rVert_2^2\right) \d s+\tfrac{1}{2}
		 {c_{\textrm{\rm col}}^{-2}} 
		 M^{2(\lambda-1)}\int_{t_1}^{t_2} C_F \left( 1+\lVert \nabla_x X_s^0 \rVert_2^2 \right) \d s \right\}  \nonumber
		 \\
		 &=   \exp\left\{ \tfrac{1}{2} \Bigl( c_F  {\max_{m=1,\cdots,M} \lambda_m^{-2}} +
		 C_F  {c_{\textrm{\rm col}}^{-2}}  M^{2(\lambda-1)}\Bigr) \bigl( t_2-t_1 \bigr) \right\} 	 \label{eq novikov2}
		 \\
		 &\hspace{15pt}  \times  \exp\left\{ \tfrac{1}{2}  c_F {\max_{m=1,\cdots,M} \lambda_m^{-2}} \int_{t_1}^{t_2} \lVert X_s^0 \rVert_2^2  \d s+\tfrac{1}{2}  C_F  {c_{\textrm{\rm col}}^{-2}}  M^{2(\lambda-1)}\int_{t_1}^{t_2} \lVert  \nabla_x X_s^0 \rVert_2^2 \d s \right\}.
	\nonumber
	\end{align}
We then let 
\begin{equation*} 
\delta := \max\Bigl( \frac{c_F  {\max_{m=1,\cdots,M} \lambda_m^{-2}}}2, \frac{C_F  {c_{\textrm{\rm col}}^{-2}}   M^{2(\lambda-1)}}2, 1 \Bigr) 
\end{equation*} 
and we invoke 
Proposition \ref{prop zeroMode} with $t_2-t_1 \leq T_0:=\ln(2)/({2\alpha})$
where $\alpha ={\max(1,c_F)} + 4 \delta$. This says that the right-hand side in \eqref{eq novikov2} has a finite conditional expectation when $t_2-t_1 \le T_0$. 
By Novikov's condition for Girsanov theorem, we complete the proof. 
\end{proof}

\section{Exponential Convergence to Equilibrium} 

\label{sec conv}	 

In addition to 
Assumptions  \ref{ass drift form} and \ref{ass drift eu}, we will use the following:   
 
\begin{assumption}\label{ass drift erg}
There exist constants $C_L$,$c_1$ and $c_2$ such that, for $u\in U^2({\mathbb S})$,  
	\begin{equation} \label{eq ass drift erg}
	\langle F(u),u\rangle \leq C_L+c_1\sum_{m\neq 0}\hat{u}^2_m-c_2\hat{u}_0^2 = C_L +c_1\lVert u-\bar{u}\rVert_2^2 -c_2\bar{u}^2,
	\end{equation} 
where $c_1< {1}/{c_P^2}$,
 $c_2>0$ ($c_P$ is the Poincaré constant for $\bS$,  {i.e., 
 $\int_{{\mathbb S}} \vert f(x) - \int_{\mathbb S} f(y) \d y \vert^2
 \d x \leq c_P^2 \int_{\mathbb S}
 \vert \nabla f(x) \vert^2 \d x$ for any smooth function 
 $f : {\mathbb S} \rightarrow {\mathbb R}$}) and 
 $\bar u := \hat{u}_0 =  \int_{\mathbb S} u(x) \d x$.
\end{assumption} 

\begin{assumption}[Bounded oscillation at bounded distance]
\label{ass drift bounded oscillation}
For any $R>0$, there exists a  constant $C_O(R)$ such that, for any two $u,v \in  U^2({\mathbb S})$ with $\| u - v \|_2 \leq R$,  
	\begin{equation} \label{eq ass drift oscillation}
	\| F(u) - F(v) \|_2 \leq C_O(R).
	\end{equation} 
\end{assumption}
  
  Assumption \ref{ass drift erg}  details a so-called \emph{one sided} condition that amounts to providing the square of the $L^2(\bS)$ norm of as a Lyapunov condition for the system with a negative prefactor. The drift is allowed a little more flexibility in the higher Fourier modes due to the dissipative nature of the Laplacian over the non-constant Fourier basis elements. 

Assumption \ref{ass drift bounded oscillation} typically holds true if $F$ is Lipschitz continuous outside a ball $B(0,R_0)$ of $U^2(\bS)$ of centre $0$ and of radius $R_0$ for a certain $R_0\geq 1$. 
Take indeed $u,v \in U^2(\bS)$ with {$\max(\|u \|_2, \| v \|_2) \geq R$} for some $R>2 R_0$. If $\| u - v \|_2 > R/2$, then $\| F(u) - F(v) \|_2 \leq  C R \leq 2 C \| u - v \|_2$ thanks to 
\eqref{eq bd4der:1} {(for a constant $C$ depending on $c_F$)}. If $\| u - v \|_2 \leq R/2$, then the segment $[u,v]$ is outside the ball $B(0,R_0)$ and $\| F(u) - F(v) \|_2 \leq C \| u - v \|_2$ thanks to the Lipschitz property of $F$ outside $B(0,R_0)$
{(with $C$ now depending on the Lipschitz constant of $F$)}.

We provide the following example: 
 
\begin{proposition} 
\label{prop:SGD:example}
Let $F : U^2({\mathbb S}) \rightarrow {L^2_{\rm sym}({\mathbb S})}$ {be a functional mapping constant random variables (on ${\mathbb S}$) onto constant random variables} such that, for three non-negative constants $C$, $c$ and $R$:
\begin{enumerate}[(i)]
\item The function $F$ has oscillations of C-linear growth, in the following sense: 
\begin{equation*} 
\forall u,v \in U^2({\mathbb S}), \quad \bigl\| F(u) - F(v) \bigr\|_2 \leq C \bigl( 1+ \| u - v \|_2 \bigr);
\end{equation*}  
\item 
For any $u \in U^2({\mathbb S})$,  
$\overline{F(u)}  
\bar{u}  \leq  - c \, 
\bar{u}^2$ if  $\vert  \bar{u}   \vert \geq R$, 
where 
$\overline{F(u)} := 
 \int_{\mathbb S} F(u)(x) \d x$. 
\end{enumerate}
Then, the function $F$ satisfies 
Assumptions 
\ref{ass drift erg}
and
\ref{ass drift bounded oscillation}.
\end{proposition}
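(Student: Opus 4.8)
The plan is to verify the two assumptions in turn, the first being essentially free and the second requiring a little work. Assumption \ref{ass drift bounded oscillation} follows directly from hypothesis (i): for $R>0$ and $u,v\in U^2({\mathbb S})$ with $\|u-v\|_2\le R$, one has $\|F(u)-F(v)\|_2\le C(1+R)$, so $C_O(R):=C(1+R)$ does the job.

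For Assumption \ref{ass drift erg}, the idea is to isolate the constant Fourier mode. Writing $\bar u=\int_{\mathbb S}u\,\d x$ and $\overline{F(u)}=\int_{\mathbb S}F(u)\,\d x$, viewed as constant functions, and using $\textrm{Leb}_{\mathbb S}({\mathbb S})=1$, the orthogonal decomposition of $L^2({\mathbb S})$ into constants and mean-zero functions gives
\begin{equation*}
\langle F(u),u\rangle=\overline{F(u)}\,\bar u+\bigl\langle F(u)-\overline{F(u)},\,u-\bar u\bigr\rangle .
\end{equation*}
I would control the first term via hypothesis (ii): on $\{|\bar u|\ge R\}$ it is at most $-c\,\bar u^2$; on $\{|\bar u|<R\}$ one bounds $|\overline{F(u)}|\le\|F(u)\|_2\le\|F(0)\|_2+C(1+\|u\|_2)$ by (i) at $v=0$, and, using $\|u\|_2^2=\bar u^2+\|u-\bar u\|_2^2<R^2+\|u-\bar u\|_2^2$ together with Young's inequality, one reduces it to a constant plus an arbitrarily small multiple of $\|u-\bar u\|_2^2$ (here $-c\,\bar u^2$ is essentially free, since $-c\bar u^2\ge-cR^2$). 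For the ``oscillation'' term, I would use that $g\mapsto g-\bar g$ is an orthogonal projection in $L^2({\mathbb S})$ and that the constant functions $0$ and $\bar u\,\mathbf 1$ lie in $U^2({\mathbb S})$, so that (i) applied at these points gives $\|F(u)-\overline{F(u)}\|_2\le C\|u-\bar u\|_2+C|\bar u|+(2C+\|F(0)\|_2)$; Cauchy--Schwarz and Young's inequality then bound $\langle F(u)-\overline{F(u)},u-\bar u\rangle$ by $(C+\varepsilon)\|u-\bar u\|_2^2+\tfrac{C^2}{4\varepsilon}\bar u^2+(\text{const})$. Adding the two estimates produces $\langle F(u),u\rangle\le C_L+c_1\|u-\bar u\|_2^2-c_2\bar u^2$ with $c_1$ slightly above $C$ and $c_2$ close to $c-\tfrac{C^2}{4\varepsilon}$, which is the form demanded by Assumption \ref{ass drift erg}.

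The step to be careful about is the final calibration of the constants: one must choose the Young parameter $\varepsilon$ large enough that the restoring term $-c\,\bar u^2$ coming from (ii) dominates the remainder $+\tfrac{C^2}{4\varepsilon}\bar u^2$ — which forces $\varepsilon>\tfrac{C^2}{4c}$ — while keeping $c_1$ below the Poincaré threshold $c_P^{-2}$ appearing in Assumption \ref{ass drift erg}; this is exactly where a quantitative relation between the oscillation constant $C$ and the restoring constant $c$ of (ii) enters, and it is the genuine content of the statement. A minor additional point — and the reason the argument sits comfortably on the cone $U^2({\mathbb S})$ rather than needing a vector-space structure — is to check, as above, that the constant functions at which (i) and (ii) are invoked do lie in $U^2({\mathbb S})$, being symmetric and weakly non-increasing.
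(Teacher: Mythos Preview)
Your argument follows the same orthogonal decomposition
\[
\langle F(u),u\rangle=\overline{F(u)}\,\bar u+\bigl\langle F(u)-\overline{F(u)},\,u-\bar u\bigr\rangle
\]
as the paper's proof and handles the two pieces in the same spirit. The paper is terser on the oscillation term: it asserts directly that this term is at most $C(1+\|u-\bar u\|_2^2)$, invoking only (i), whereas you more honestly pick up the $C|\bar u|$ contribution in $\|F(u)-\overline{F(u)}\|_2$ and trade it off against $-c\,\bar u^2$ via Young's inequality. Your version is the more careful of the two; in fact the paper's one-line bound does not follow from (i) alone (take $F(u)=-c\bar u+K\bar u\,e_1$ with $K$ large to see that the oscillation term carries an irreducible $\bar u$ factor), so one must either absorb a $\bar u^2$ remainder into the $-c\,\bar u^2$ coming from (ii), as you do, or tacitly assume something stronger.

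Where you go beyond the paper is in flagging the calibration of constants: to land inside Assumption~\ref{ass drift erg} one must produce $c_1<c_P^{-2}$ and $c_2>0$ simultaneously, and your Young-parameter analysis makes transparent that this forces a relation of the type $\varepsilon>C^2/(4c)$ together with $C+\varepsilon$ (plus lower order terms) $<c_P^{-2}$, i.e.\ a smallness of $C$ relative to $c$ and $c_P^{-2}$. You are right that this is not a consequence of (i)--(ii) as written with arbitrary non-negative $C,c$; the paper's proof does not address this point. So your proof is correct under that additional implicit smallness hypothesis on $C$, and your identification of the constraint is accurate --- it is a restriction on the constants that the proposition should carry.
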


\begin{remark}
\label{rem:examples}
Obviously, the oscillation is at most of linear growth
(item $(i)$ 
in Proposition 
\ref{prop:SGD:example})
 if $F$ is $C$-Lipschitz continuous.   
\end{remark}
 
\begin{proof}[Proof of Proposition 
\ref{prop:SGD:example}]
The only difficulty is to check 
Assumption 
\ref{ass drift erg}. The left-hand side in 
\eqref{eq ass drift erg} writes (in a generic manner)
$ \langle F(u),u\rangle =  \overline{F(u) u}$. 
We have 
\begin{equation*}
\begin{split}
\langle F(u),u\rangle = 
  \overline{F(u) u}
&= 
\overline{F(u)} \bar{u}
+  \overline{F(u) (u - \bar{u})}
 =  \overline{F(u)} \bar{u}
+ 
\overline{\bigl( F(u) - \overline{F(u)} \bigr) \bigl(u - \bar{u}\bigr)}. 
\end{split} 
\end{equation*} 
By condition $(i)$ in the statement {(which implies $\| F(u) - \overline{F(u)} \|_2
\leq
\| F(u) - F(\bar u) \|_2 \leq 
C (1+ \| u - \bar u \|_2)$, because $F(\bar u)$ is a constant random variable)}, the second term
{in the right-hand side}
 is less than $C (1+ \|  u-\bar u \|_2^2)$
{for a possibly new value of $C$}. 
And by condition $(ii)$, the first one is less than $- c \bar{u}^2$ when 
$\vert  \bar u \vert \geq R$. When $\vert  \bar u \vert \leq R$, condition $(i)$ says that 
(allowing $C$ to change {from one inequality to another and to depend on $F(0)$, which may be identified with a real})
\begin{equation*}
\begin{split} 
\bigl\vert \overline{F(u)}  
\bar{u}
\bigr\vert \leq R \cdot 
\bigl\vert  \overline{F(u)}  \bigr\vert
&\leq C R 
\bigl( 
1+
{\| u \|_2}
 \bigr) 
  \leq C R\bigl( 1+ 
 \|  u-\bar u \|_2^2
 + \vert \bar u \vert \bigr) 
 \leq C R\bigl( 1+ R +  \|  u-\bar u \|_2^2  \bigr).
  \end{split}
  \end{equation*} 
  Since  $\vert \bar u \vert \leq R$, we can add $R^2$ and subtract 
  $\bar u^2$. 
  We get 
  \eqref{eq ass drift erg}.
\end{proof}

\subsection{Main Statement}

Let $X^u$ denote the (weakly) unique solution to the drifted rearranged stochastic heat equation started from $u \in U^2({\mathbb S})$. Then one may define the semigroup $P_\cdot$ over $U^2(\bS)$, by letting $P_t \varphi (u):=\mathbb{E}[\varphi(X^{u}_t)]$ {for $t \geq 0$ and $\varphi : U^2({\mathbb S}) \rightarrow {\mathbb R}$ bounded and measurable}, and the transition kernels 
{$\cP_\cdot(u,\cdot)$, by letting $\cP_t(u,\cdot):=\mathscr{L}(X^u_t)$}, so that 
$P_t \varphi(u) = \langle \cP_t(u,\cdot),\varphi \rangle$, where $\langle \cdot , \cdot \rangle$ stands here for the duality bracket {(between functions and measures on 
$U^2({\mathbb S})$)}. This section establishes the following result:
\begin{theorem}
\label{thm expStab}
Let Assumptions \ref{ass drift form}, \ref{ass drift eu}, \ref{ass drift erg} and \ref{ass drift bounded oscillation} hold. The semigroup $P_\cdot$ has a unique invariant measure $\mu_{\textrm{\rm inv}}$ and there are positive constants $c$ and $\gamma$ such that 
\begin{equation}
\label{eq expStab}
{
\forall u \in U^2({\mathbb S}), \ 
\forall t \geq 0,}
\quad 
d_{\rm TV}
\bigl(  \cP_t(u,\cdot), \mu_{\textrm{\rm inv}}\bigr) \leq Ce^{-\gamma t}\bigl(1+{\| u \|_2^2}\bigr),
\end{equation}
{with 
$d_{\rm TV}$ denoting the total variation distance between probability measures on $U^2({\mathbb S})$.}
\end{theorem}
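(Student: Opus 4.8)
The plan is to run the Harris--Meyn--Tweedie ergodic scheme, adapting to the present (reflected, infinite-dimensional) framework the coupling argument that was transferred to SPDEs in \cite{debusscheHuTessitore2011ergBSDE}. Concretely, I would establish two facts: \emph{(a)} a Foster--Lyapunov drift inequality for $V:=1+\|\cdot\|_2^2$, namely $P_tV(u)\leq e^{-2\kappa t}V(u)+C$ for some $\kappa>0$, all $u\in U^2({\mathbb S})$ and all $t\geq0$; and \emph{(b)} a local minorisation (``small set'') property: for every $R>0$ there exist $t_0=t_0(R)>0$ and $p(R)\in(0,1]$ such that $d_{\rm TV}\bigl(\cP_{t_0}(u,\cdot),\cP_{t_0}(v,\cdot)\bigr)\leq1-p(R)$ for all $u,v\in U^2({\mathbb S})$ with $\|u\|_2,\|v\|_2\leq R$. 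Granting \emph{(a)} and \emph{(b)}, Harris' theorem --- applied to the skeleton chain $(P_{nt_0})_{n\in{\mathbb N}}$ and then upgraded to continuous time by means of \emph{(a)} --- provides a unique invariant probability measure $\mu_{\textrm{\rm inv}}$ with $\mu_{\textrm{\rm inv}}(V)<\infty$, together with a geometric contraction in the $V$-weighted total-variation norm. Since $d_{\rm TV}\bigl(\cP_t(u,\cdot),\mu_{\textrm{\rm inv}}\bigr)\leq\sup_{|\varphi|\leq V}\bigl|P_t\varphi(u)-\mu_{\textrm{\rm inv}}(\varphi)\bigr|$ and $V(u)=1+\|u\|_2^2$, this is exactly \eqref{eq expStab}.

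\emph{Step 1 (Lyapunov drift).} Apply Proposition \ref{prop ito fsquare} with $f_t\equiv1$, take expectations (the stochastic integral is a genuine martingale after the localisation underlying Proposition \ref{prop zeroMode}, which applies here since a Dirac initial condition trivially satisfies the required moment bounds), and insert Assumption \ref{ass drift erg}. Combining this with the Poincaré inequality on the circle, $\|\nabla_x X_t^u\|_2^2\geq c_P^{-2}\|X_t^u-\bar X_t^u\|_2^2$, and with the standing constraints $c_1<c_P^{-2}$ and $c_2>0$, one gets, setting $\kappa:=\min(c_P^{-2}-c_1,c_2)>0$,
\[
\frac{\d}{\d t}\,\mathbb E\bigl[\|X_t^u\|_2^2\bigr]\;\leq\;2C_L+c_\lambda-2\kappa\,\mathbb E\bigl[\|X_t^u\|_2^2\bigr],
\]
whence $P_tV(u)\leq e^{-2\kappa t}V(u)+C$ after absorbing constants. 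Proposition \ref{prop zeroMode} also yields the exponential-moment estimates that keep all the Girsanov densities appearing below uniformly integrable over bounded balls.

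\emph{Step 2 (coupling and minorisation).} This is the core of the argument and the point where the structure of the infinite-dimensional noise is genuinely used. Fix $R>0$ and $u,v\in U^2({\mathbb S})$ with $\|u\|_2,\|v\|_2\leq R$. I would couple the two solutions by a \emph{finite-mode} coupling: the finitely many low Fourier modes (up to a cut-off $N=N(R)$) are forced to agree by adding a bounded control to the driving noise of one copy through a Girsanov change of measure --- affordable since each $\lambda_m>0$, so those modes lie in the Cameron--Martin space of $W$, and the resulting equation is again a drifted rearranged stochastic heat equation, hence automatically $U^2({\mathbb S})$-valued --- while the remaining (infinitely many) high modes are left driven by the \emph{same} noise and contract on their own: the dissipativity of the Laplacian on modes above $N$ acts at rate of order $N^2$ and, once $N$ is large enough, dominates both the oscillation of $F$ on bounded sets (Assumption \ref{ass drift bounded oscillation}; the reflection contributions to the $L^2$-difference remain dissipative, as already encoded in the non-expansion \eqref{eq:Xtx:Xty} of the undrifted flow) and the colouring tail ($\lambda\in(1/2,1)$ ensures that $\sum_{m>N}\lambda_m^2$ is small for $N$ large). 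The Lyapunov bound of Step 1 confines both copies to a slightly larger ball throughout $[0,t_0]$ with high probability, which is what legitimises the use of Assumption \ref{ass drift bounded oscillation}. The absence of any continuity of $F$ is handled exactly as in the proof of Lemma \ref{lem:Feller:semi-group}, via Lusin's theorem and the total-variation smoothing estimate \eqref{eq:d:TV:Xtx:Xty} borrowed from \cite[Theorem 5.11]{delarueHammersley2022rshe}; and the change of measure that reinstates the original drift $F$ has expectation $1$ and controlled moments by Lemma \ref{lem:PF:P} together with Step 1 and Assumption \ref{ass drift eu}. Since under the coupling the two copies coincide at time $t_0$ with probability at least $p(R)>0$, property \emph{(b)} follows; existence of an invariant measure can alternatively be obtained directly from Step 1 by Krylov--Bogolyubov, the time-averaged occupation measures being tight in $U^2({\mathbb S})$ thanks to the $H^1$-regularity estimates stemming from Proposition \ref{prop ito fsquare} and the compact embedding $H^1_{\rm sym}({\mathbb S})\hookrightarrow L^2_{\rm sym}({\mathbb S})$.

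\emph{Main obstacle.} The delicate part is Step 2: engineering the infinite-dimensional coupling --- low modes steered by a Girsanov drift, high modes damped by the Laplacian against a drift that is only controlled through its bounded oscillation on balls --- while simultaneously coping with the complete lack of continuity of $F$ and keeping every Radon--Nikodym density uniformly integrable over the relevant ball. By contrast, Step 1 is routine given Propositions \ref{prop ito fsquare}--\ref{prop zeroMode}, and the passage from \emph{(a)} and \emph{(b)} to \eqref{eq expStab} is a black-box invocation of Harris' theorem.
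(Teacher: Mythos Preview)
Your Step~1 and the closing appeal to Harris' theorem are exactly what the paper does: the Lyapunov bound is precisely \eqref{eq unifEstBall 6}, and the paper finishes by plugging Theorem~\ref{thm expConv} into Theorem~\ref{thm Harris}.

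The gap is in Step~2. The finite-mode coupling you outline (steer the low modes of one copy by a Girsanov control, let the high modes contract under the Laplacian) is the standard semilinear-SPDE recipe, but the reflection $\eta$ blocks it here. The only sign information available is for the \emph{full} difference, $\langle X_t^u-X_t^v,\,\d\eta_t^u-\d\eta_t^v\rangle\le0$; this is what makes \eqref{eq:Xtx:Xty} work. There is no reason for the high-mode projection $\langle Q_N(X_t^u-X_t^v),\,\d\eta_t^u-\d\eta_t^v\rangle$ to have a sign, because the rearrangement constraint --- and hence $\eta$ --- acts on all Fourier modes simultaneously. So you cannot separate ``low modes are controlled'' from ``high modes decay on their own'': the reflection recouples them. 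Relatedly, once you have added a Girsanov drift supported on low modes, the difference of the two reflection processes is no longer governed by the clean contraction \eqref{eq:Xtx:Xty}, and you have not explained how ``close at time $t_0$'' becomes ``equal with probability $p(R)>0$''.

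The paper avoids any mode splitting in the dynamics. It instead proves directly that the time-$T$ \emph{marginal laws} of two solutions started inside the ball are mutually absolutely continuous with a uniform third-moment bound on the density (Lemma~\ref{lem cubeTrans}). The device is a \emph{shifted process} $t\mapsto X_t^{0,(u+\frac{T-t}{T}z)^*}$, borrowed from \cite{delarueHammersley2022rshe}: under an explicit Girsanov change this solves the undrifted equation from $(u+z)^*$, and because the shift is a global perturbation of the initial condition in $U^2({\mathbb S})$ the constraint and its reflection are handled automatically by the existing theory (Lemma~\ref{lem noLocalisation} removes the localisation for $T$ large). The finite-dimensional truncation $E^M$ appearing there is only to apply Rademacher's theorem and is removed at the end via Lemma~\ref{lem:Feller:semi-group}. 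Mattingly's Lemma~\ref{lem mattingly} then converts the density bound into a coupling with uniformly positive success probability, and Lemma~\ref{lem unif est pair} together with the iterated construction in the proof of Theorem~\ref{thm expConv} delivers the small-set estimate you need.
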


The proof relies on Harris' theorem, 
in a form taken from \cite{hairerMattinglyScheutzow2011harris}, 
see Theorem \ref{thm Harris} below. 
The key point is to show that for two solutions started within a ball, there exists a coupling of the solutions after some elapsed time having non-trivial probability of being equal. 
This is established in 
Subsection 
\ref{subse:3.3} below.

Another useful (and easier) result is 

\begin{lemma}
	\label{lem unif est pair}
	Let 
	${\mathcal V} :   U^2({\mathbb S}) \ni u \mapsto \| u \|_2^2$. Then, 
	there exist two positive constants 
	$c$ and $C$  only depending on the parameters in the aforementioned assumptions such that 	
	for any $t >0$ and $u \in U^2({\mathbb S})$, 
	\begin{equation*} 
	P_t {\mathcal V}(u) \leq e^{-ct} {\mathcal V}(u) +C. 
	\end{equation*}
	\end{lemma}

\begin{proof}   
Writing 
\eqref{ito:square} in integral form, applying conditional expectation and the Poincaré inequality, we get, for any $s \leq t_1 < t_2$, 
	\begin{equation*}
		\begin{split}
			&\E\left[ \lVert    X_{t_2}  \rVert^2_2 \,  | \, \cF_s  \right] 
			\\
			&=  \E\left[ \lVert    X_{t_1}  \rVert^2_2 \, | \, \cF_s  \right]   + 2 \int_{t_1}^{t_2}\E \left[ \langle  X_r,   F(X_r) \rangle \,  | \, \cF_s    \right] \d r	-2 \int_{t_1}^{t_2} 
			\E\left[ \lVert \nabla_x X_r  \rVert_2^2 \, |\, \cF_s   \right] \d r  + c_\lambda (t_2-t_1) \\ 
			&\leq  \E\left[ \lVert    X_{t_1}  \rVert^2_2 \, | \, \cF_s  \right]   + 2 \int_{t_1}^{t_2} \E \left[ \langle X_r,   F(X_r) \rangle \,  | \, \cF_s  \right] \d r 
				-\tfrac{2}{c_P^2} \int_{t_1}^{t_2} \E\left[ 
			\| X_r - \bar X_r \|_2^2 \, | \, \cF_s 
			 \right] \d r 
			 \\
			 &\hspace{15pt} + c_\lambda(t_2-t_1),   
		\end{split}
	\end{equation*}
	where we recall that $\bar X_r = \int_{\mathbb S} X_r(x) \d x$. 	
Applying Assumption \ref{ass drift erg},
	\begin{equation*}
		\begin{split}
			&\E\left[ \lVert X_{t_2}  \rVert^2_2 \,  | \, \cF_s  \right]  
			\\
			&\leq   \E\left[ \lVert X_{t_1}  \rVert^2_2  \,   | \,  \cF_s  \right]   +   2\int_{t_1}^{t_2} \E \left[\left(c_1 -\tfrac{1}{c_P^2}  \right) 	\| X_r - \bar X_r \|_2^2  -c_2 \bar{X}_r^2 \,   | \, \cF_s    \right] \d r  
			  + (2C_L+c_\lambda) (t_2-t_1) 
			\\
			&\leq   \E\left[ \lVert X_{t_1}  \rVert^2_2  \,   | \,  \cF_s  \right]   -     2\left( \left( \tfrac{1}{c_P^2}-c_1  \right)\wedge c_2 \right)\int_{t_1}^{t_2} 
			\E \left[  \lVert   X_r  \rVert_2^2     \,   | \,  \cF_s  \right] \d r 
			  + (2C_L+c_\lambda) (t_2-t_1).  
		\end{split}
	\end{equation*}
	Denote $c_\omega : = 2[({1}/{c_P^2}-c_1 )\wedge c_2 ]$. Then, by a standard decay estimate and the Markov property, 
	\begin{equation*}
		\begin{split}
			\E\left[ \lVert X_t  \rVert^2 \,   | \,  \cF_s  \right]  	\leq & \lVert    X_s  \rVert_ 2^2 e^{- c_\omega (t-s) }+ c_\omega^{-1}\left( {2C_L+c_\lambda}\right)(1-e^{-c_\omega (t-s)})    \\ 
			\leq & \lVert    X_s  \rVert_ 2^2 e^{- c_\omega (t-s) } + c_\omega^{-1}\left( {2C_L+c_\lambda}\right),   \\ 
		\end{split}
	\end{equation*}
	which completes the proof. 
\end{proof}

\color{black}

\subsection{Coupling for Processes Initialised Inside a Ball}
\label{subse:3.3} 
The next step is to establish couplings
with a non-trivial probability of cohesion
 for processes started from within balls having arbitrary radius:
 \begin{proposition}\label{prop couplingFromBall}
Fix $R>0$. There exist $c^*,T^*>0$, depending on $R,\, C_{\rm col}$ and the assumptions on $F$, such that 
for $\| u \|_2,\| v \|_2 \leq R$, %
 \begin{equation*} 
  \begin{split} 
d_{\rm TV} \bigl( \cP_{T^*}(u,\cdot) , \cP_{T^*}(v,\cdot) \bigr) 
 \leq 1 - c^*.
\end{split}  
 \end{equation*}
 \end{proposition}

The rest of this subsection is dedicated to the proof of 
Proposition 
\ref{prop couplingFromBall}. \color{black} 
To demonstrate the coupling, we consider introducing a linear drift to the rearranged stochastic heat equation:
\begin{equation}
\label{eq:RSHE:reverting:lambda} 
\d  X_t^{\mu} = - \mu X_t^{\mu} \d t + \Delta X_t^{\mu} \d t + \d W_t + \d \eta_t
\end{equation}
for a parameter $\mu >0$. The flow generated by \eqref{eq:RSHE:reverting:lambda}, started from $u\in U^2(\mathbb{S})$, is denoted 
$(X_t^{\mu,u},\eta_t^{\mu,u})_{t \geq 0}$. 
We refer to \eqref{eq:RSHE:reverting:lambda}
as the \textit{linearly} drifted 
RSHE (as opposed to the $F$-drifted equation
introduced in Definition 
\ref{def:existence}). 
Theorem	\ref{thm eu w drift}
applies to this setting. {Because this drift is Lipschitz continuous (with respect to 
$\| \cdot \|_2$), 
uniqueness (and thus existence as well, by a standard adaptation of Yamada--Watanabe theorem) holds in the strong sense. 
Moreover, 
there is no difficulty in proving the following 
variant of  Proposition 4.17 in 
\cite{delarueHammersley2022rshe}, which addresses the spatial regularity of the flow.}

\begin{lemma}
\label{lem:reg:OU:RSHE}
For any $u,v \in U^2({\mathbb S})$, 
with probability 1,
for all $t \geq 0$, 
\begin{equation*} 
\| X_t^{\mu,u} - X_t^{\mu,v} \|_2^2 \leq \exp \bigl( - 2\mu t \bigr) 
\| u -v \|_2^2, 
\quad \int_0^t 
\| \nabla_x (X_s^{\mu,u} - X_s^{\mu,v} )\|_2^2 \d s \leq 
\frac{1}{2 } \| u -v \|_2^2, 
\end{equation*}  
and
\begin{equation*}
\int_0^t 
\| X_s^{\mu,u} - X_s^{\mu,v} \|_2^2 \d s \leq 
\frac{1}{2\mu} \| u -v \|_2^2, 
\end{equation*}

\end{lemma} 

\begin{proof}
We just provide a sketch of the proof. Following 
the proof of Proposition 4.17 in 
\cite{delarueHammersley2022rshe}, we have, for all $s,t \geq 0$, 
\begin{equation*}
\begin{split}
&\| X_t^{\mu,u} - X_t^{\mu,v} \|_2^2+ 2 \int_s^t \bigl\| \nabla_x \bigl( X_r^{\mu,u} - X_r^{\mu,v} \bigr) 
\bigr\|_2^2 \d r + 2 \mu \int_s^t \|   X_r^{\mu,u} -   X_r^{\mu,v} \|_2^2 
\d r 
\\
&\leq \| X_s^{\mu,u} - X_s^{\mu,v} \|_2^2,
\end{split}
\end{equation*} 
from which the result easily follows. 
\end{proof}

This lemma, in combination with  Theorem 5.9 in \cite{delarueHammersley2022rshe}, yields the following: 
\begin{lemma}
\label{prop:5:5:rev}
There exists a constant $c$ such that, for any $\delta>0$, $T>\delta$ and $u,v \in U^2({\mathbb S})$,  
\begin{equation*} 
d_{\rm TV} \bigl( {\mathscr L}(X_T^{\mu,u}) , {\mathscr L}(X_T^{\mu,v}) \bigr) \leq 
c_\lambda \delta^{-\frac{1+\lambda}{2}}
\exp\bigl( -  \mu (T-\delta ) \bigr)
 \| u - v \|_2, 
 \end{equation*} 
where the left-hand side denotes the total variation distance between 
${\mathscr L}(X_T^{\mu,u})$ and 
${\mathscr L}(X_T^{\mu,v})$, probability measures on 
$U^2({\mathbb S})$. 
\end{lemma} 

\begin{proof} 
We introduce the useful notation 
\begin{equation*} 
P_t^\mu \varphi(u) := {\mathbb E} \bigl[ \varphi(X_t^{\mu,u}) \bigr], \quad u \in U^2({\mathbb S}), 
\end{equation*} 
for $\varphi \in B_b(L^2_{sym}({\mathbb S}))$. 
Then, for $T>\delta$,
\begin{equation*} 
P^\mu_{T} \varphi(u) - P^\mu_{T} \varphi(v)
= {\mathbb E} \bigl[ P_\delta^{\mu} \varphi  (X_{T-\delta}^{\mu,u}) 
\bigr] - 
{\mathbb E} \bigl[ P_\delta^{\mu} \varphi  (X_{T-\delta}^{\mu,v} ) \bigr].
\end{equation*} 
There is no difficulty in adapting Theorem 5.9 from \cite{delarueHammersley2022rshe} to this \emph{linearly drifted} setting, proving that 
the function $P_\delta^\mu \varphi$ is Lipschitz continuous, with a Lipschitz constant bounded by 
$c_\lambda \delta^{-(1+\lambda)/{2}} \| \varphi\|_\infty$. Importantly, the constant $c_\lambda$ is independent of $\mu \in (0,+\infty)$ because the regularity of the flow, as stated in 
Lemma \ref{lem:reg:OU:RSHE}, is independent of $\mu$. 
And then,
\begin{equation*} 
\begin{split} 
\bigl\vert P^\mu_{T} \varphi(u) - P^\mu_{T} \varphi(v) \bigr\vert 
&\leq c_{\lambda} \delta^{-\frac{1+\lambda}{2}} \| \varphi \|_\infty {\mathbb E} \bigl[ \| X_{ {T-\delta}}^{\mu,u} - X_{ {T-\delta}}^{\mu,v} \|_2 \bigr] \\
 & \leq c_{\lambda} \delta^{-\frac{1+\lambda}{2}} \exp \bigl( - \mu {(T-\delta)} \bigr) \|\varphi \|_\infty  \| u - v \|_2,
\end{split}
\end{equation*}
with the last line following from 
Lemma 
\ref{lem:reg:OU:RSHE}.
\end{proof}

We return to the study of the RSHE drifted by $F$. For any fixed $T>0$, we can rewrite the equation solved by (the \emph{linearly drifted})
$X^{\mu,u}$
as
\begin{equation*}
\begin{split} 
\d X_t^{\mu,u} &= \Bigl( F(X_t^{\mu,u}) {\mathbf 1}_{\{ t \le T\}}
- \mu X_t^{\mu,u} {\mathbf 1}_{\{ t >T\}} 
\Bigr)
\d t 
\\
&\hspace{15pt} + \Delta X_t^{\mu,u} \d t +  \Bigl[\d W_t - 
\bigl(  F(X_t^{\mu,u}) + \mu X_t^{\mu,u}   \bigr) {\mathbf 1}_{\{ t \le T\}} \d t \Bigr] + \d \eta_t^{\mu,u}, \quad t \geq 0,
\end{split} 
\end{equation*}  
and define
\begin{equation} 
\label{eq:PT:F,x}
{\mathbb P}_T^{F,u} :=
{\mathcal E}_T \biggl\{ \int_0^{\cdot} \sum_{m \in {\mathbb N}_0} \lambda_m^{-1}
\bigl\langle F(X_t^{\mu,u}) + \mu X_t^{\mu,u}   , e_m \bigr\rangle \d \beta_s^{m} 
\biggr\} \cdot {\mathbb P} .
\end{equation} 
By an easy adaptation of {Theorem \ref{thm eu w drift}
and Lemma
\ref{lem:weak uniqueness}}, 
${\mathbb P}_T^{F,u}$ is a probability measure. Furthermore, under ${\mathbb P}_T^{F,u}$, 
$(X_t^{\mu,u})_{0 \le t \le T}$ solves the $F$-drifted RSHE  on the interval 
$[0,T]$ with $u$ as initial condition, and 
$(X_t^{\mu,u})_{t \geq T}$ solves the linearly drifted RSHE on $[T,+\infty)$ with $X_T^{\mu,u}$ as initial condition at time $T$.
We claim 
\begin{lemma} 
\label{lem:dTV:by:entropy}
Let $\delta$ be a  time duration in $(0,1)$
and $u\in U^2(\mathbb{S})$ be such that 
$\| u \|_2 \leq R$. 
{With $\lambda$ as in \eqref{eq colouring}}, there exists a constant $C_{F,\lambda,R}$ (independent of $\mu$, $\delta$ and $T$) such that
\begin{equation*} 
d_{\rm TV} \bigl(  {\mathbb P}^{F,u}_T  ,
 {\mathbb P}_{T+\delta}^{F,u}
 \bigr) \leq  C_{F,\lambda,R} \, 
 {(1+\mu)} \, \delta^{\frac{1-\lambda}{2}}.
 \end{equation*} 
\end{lemma}
\begin{proof} 
The proof relies on the relative entropy 
${\mathbb E}^{ {\mathbb P}^{F,u}_{T}} [ - \ln ( { \d {\mathbb P}^{F,u}_{T+\delta}}/{\d {\mathbb P}^{F,u}_{T}} ) 
].$
Here, 
\begin{equation*} 
\begin{split}
\ln \Bigl( \frac{ \d {\mathbb P}^{F,u}_{T+\delta}}{\d {\mathbb P}^{F,u}_{T}} \Bigr)
&=
\sum_{m \in {\mathbb N}_0} \int_{T}^{T+\delta} 
  \lambda_m^{-1}
\bigl\langle  F(X_t^{\mu,u}) + \mu X_t^{\mu,u} , e_m \bigr\rangle \d \beta_t^{m} 
\\
&\hspace{15pt} - 
\frac12 
\sum_{m \in {\mathbb N}_0} \int_{T}^{T+\delta} 
  \lambda_m^{-2}
\bigl\vert \bigl\langle  F(X_t^{\mu,u}) + \mu X_t^{\mu,u} , e_m \bigr\rangle
\bigr\vert^2
 \d t.
 \end{split} 
\end{equation*} 
And then, 
\begin{equation*} 
{\mathbb E}^{ {\mathbb P}^{F,u}_{T}} \Bigl[ - \ln \Bigl( \frac{ \d {\mathbb P}^{F,u}_{T+\delta}}{\d {\mathbb P}^{F,u}_{T}} \Bigr) 
\Bigr]
= 
\frac12 
{\mathbb E}^{ {\mathbb P}^{F,u}_{T}} \biggl[
\sum_{m \in {\mathbb N}_0} \int_{T}^{T+\delta} 
  \lambda_m^{-2}
\bigl\vert \bigl\langle  F(X_t^{\mu,u}) + \mu X_t^{\mu,u} , e_m \bigr\rangle
\bigr\vert^2
 \d t
\biggr]. 
\end{equation*} 
To estimate the above, we proceed as follows.
\begin{align} 
&\sum_{m \in {\mathbb N}_0} \int_{T}^{T+\delta} 
  \lambda_m^{-2}
\bigl\vert \bigl\langle  F(X_t^{\mu,u}) + \mu X_t^{\mu,u} , e_m \bigr\rangle
\bigr\vert^2
 \d t \nonumber
\\
&=  \int_{T}^{T+\delta}  \biggl( \sum_{{m = 0}}+\sum_{{m \in {\mathbb N}}}\biggr)
  \lambda_m^{-2}
\bigl\vert \bigl\langle  F(X_t^{\mu,u}) + \mu X_t^{\mu,u} , e_m \bigr\rangle
\bigr\vert^2
 \d  t
 \label{eq:mu:F:HSPLIT}
\\
&\leq     \int_{T}^{T+\delta} \biggl(  { \lambda_0^{-2}}  \lVert F(X_t^{\mu,u}) + \mu X_t^{\mu,u} \rVert_2^2  
	+	  \sum_{{m \in {\mathbb N}}} \lambda_m^{-2}
\bigl\vert \bigl\langle  F(X_t^{\mu,u}) + \mu X_t^{\mu,u} , e_m \bigr\rangle
\bigr\vert^2  
\biggr)
 \d t.
 \nonumber		
\end{align}  
Next, by H\"older's inequality and 
\eqref{eq colouring}, 
\begin{equation} 
\label{eq:mu:F:Holder}
\begin{split} 
&{\mathbb E}^{ {\mathbb P}^{F,u}_{T}}
\biggl[ \int_{T}^{T+\delta} 
\biggl( 
\sum_{ {m \in {\mathbb N}}} 
  \lambda_m^{-2}
\bigl\vert \bigl\langle  F(X_t^{\mu,u}) + \mu X_t^{\mu,u} , e_m \bigr\rangle
\bigr\vert^2
\biggr) 
 \d t
  \, \Bigr\vert \, {\mathcal F}_T \biggr]
\\
& \leq C_{\rm col} {\mathbb E}^{ {\mathbb P}^{F,u}_{T}} \biggr[
\sum_{m \in {\mathbb N}} \int_{T}^{T+\delta} 
   m^{2 \lambda}
\bigl\vert \bigl\langle  F(X_t^{\mu,u}) + \mu X_t^{\mu,u} , e_m \bigr\rangle
\bigr\vert^2
 \d t  \, \Bigr\vert \, {\mathcal F}_T \biggr]
 \\
 &\leq C_{\rm col}\biggl( {\mathbb E}^{ {\mathbb P}^{F,u}_{T}} \biggr[ \sum_{m \in {\mathbb N}}  \int_{T}^{T+\delta} 
  m^2
\bigl\vert \bigl\langle  F(X_t^{\mu,u}) + \mu  X_t^{\mu,u} , e_m \bigr\rangle
\bigr\vert^2
 \d t  \, \Bigr\vert \, {\mathcal F}_T \biggr] \biggr)^\lambda 
 \\
&\hspace{15pt} \times \biggl( {\mathbb E}^{ {\mathbb P}^{F,u}_{T}} \biggr[ \sum_{m \in {\mathbb N}}   \int_{T}^{T+\delta} 
\bigl\vert \bigl\langle  F(X_t^{\mu,u}) + \mu  X_t^{\mu,u} , e_m \bigr\rangle
\bigr\vert^2
 \d t  \, \Bigr\vert \, {\mathcal F}_T \biggr] \biggr)^{1-\lambda}
 \\
 & \leq C_{\rm col} \biggl( {\mathbb E}^{ {\mathbb P}^{F,u}_{T}}
 \biggr[
 \int_T^{T+\delta} \bigl\| \nabla_x \bigl( F(X_t^{\mu,u}) + \mu X_t^{\mu,u} 
 \bigr) \bigr\|_2^2 \d t
  \, \Bigr\vert \, {\mathcal F}_T \biggr]
 \biggr)^\lambda
 \\
&\hspace{15pt} \times \biggl( {\mathbb E}^{ {\mathbb P}^{F,u}_{T}} \biggr[ \int_{T}^{T+\delta} 
\bigl\| F(X_t^{\mu,u}) + \mu  X_t^{\mu,u} 
\bigr\|_2^2
 \d t  \, \Bigr\vert \, {\mathcal F}_T \biggr] \biggr)^{1-\lambda}.
\end{split} 
\end{equation}  
And then, 
using the fact that, under ${\mathbb P}^{F,u}_T$, the process $(X_s^{\mu,u})_{T \leq s \leq T+\delta}$ 
solves the linearly drifted RSHE, with $X_T^{\mu,u}$ as initial condition,  
and {invoking} Proposition \ref{prop ito fsquare}, 
\begin{equation*} 
\sup_{T \le t \le T +\delta} 
 {\mathbb E}^{{\mathbb P}_T^{F,u}} \bigl[ 
 \| X_t^{\mu,u} \|_2^2 \, \bigr\vert \, {\mathcal F}_T
 \bigr]
+
{\mathbb E}^{{\mathbb P}_T^{F,u}} \biggl[ \int_T^{T+\delta} \| \nabla_x X_t^{\mu,u} \|_2^2 \d 
 t \, \Bigr\vert \, {\mathcal F}_T \biggr] \leq 2 \Bigl( \| X_T^{\mu,u} \|^2_2 + c_\lambda \delta \Bigr).
 \end{equation*} 
Therefore, from Assumption \ref{ass drift eu}, 
\eqref{eq:mu:F:HSPLIT}
and
\eqref{eq:mu:F:Holder}, 
\begin{equation*} 
\begin{split}
{\mathbb E}^{{\mathbb P}_T^{F,u}} \biggl[ 
\sum_{m \in {\mathbb N}_0} \int_{T}^{T+\delta} 
  \lambda_m^{-2}
\bigl\vert \bigl\langle  F(X_t^{\mu,u}) + \mu X_t^{\mu,u} , e_m \bigr\rangle
\bigr\vert^2
 \d t \, \Bigr\vert \, {\mathcal F}_T \biggr]
 &\leq C_{F,\lambda}\,  {(1+\mu)^2}  \, \delta^{1-\lambda} \, \Bigl( 1+ \| X_T^{\mu,u} \|^2_2 \Bigr). 
 \end{split}
 \end{equation*} 
We deduce that 
\begin{equation*}
\begin{split} 
&{\mathbb E}^{ {\mathbb P}^{F,u}_{T}} \Bigl[ - \ln \Bigl( \frac{ \d {\mathbb P}^{F,u}_{T+\delta}}{\d {\mathbb P}^{F,u}_{T}} \Bigr) 
\Bigr]
\leq C_{F,\lambda}\,  {(1+\mu)^2} \, \delta^{1-\lambda} \,  \bigl( 1 +  {\mathbb E}^{{\mathbb P}^{F,u}_{T}}  \bigl[ \vert X_T^{\mu,u} \vert^2 \bigr] \bigr), 
 \end{split}
\end{equation*} 
where we observe that the moment appearing in the right-hand side is nothing but the second-order moment 
of the $F$-drifted  
RSHE starting from $u$, which can be bounded independently of $T$ and $u$ (as long as $\| u \|_2 \leq R$) {courtesy of Lemma \ref{lem unif est pair}}. In other words, the rightmost factor in the above may be bounded by a constant $C_R$. {By Pinsker's inequality 
\cite[Lemma 2.5, p.88]{Tsybakov}, 
 the result easily follows.}
\end{proof}

We now consider another initial condition $v \in U^2({\mathbb S})$. {Recalling that
$(X_t^{\mu,v},\eta^{\mu,v})_{t \geq 0}$ solves 
\eqref{eq:RSHE:reverting:lambda} 
with $v$ as initial condition, 
we claim} 
\begin{lemma}
\label{lem:DKL:1}
There exists a constant $C_{F,\lambda,R}$ $($independent of $\delta$, $T$, and $\mu)$ such that, for $\| u \|_2,\| v \|_2 \leq R$, 
 \begin{equation*} 
d_{\rm TV} \bigl( {\mathscr L}_{{\mathbb P}_{T+\delta}^{F,u}}(X_{T+\delta}^{\mu,u}) , {\mathscr L}_{{\mathbb P}_{T+\delta}^{F,u}}(X_{T+\delta}^{\mu,v}) \bigr) \leq  C_{F,\lambda,R} \left(\delta^{-\frac{1+\lambda}{2}}\exp( - \mu  {T}  ) +
 {(1+
 \mu)} \delta^{\frac{1-\lambda}{2}} \right),   
 \end{equation*}
 {where ${\mathscr L}_{{\mathbb P}_{T+\delta}^{F,u}}(\cdot)$
denotes the law of a generic random variable under 
${\mathbb P}_{T+\delta}^{F,u}$.}
\end{lemma}

\begin{proof}
 {For a test function $\varphi  \in B_b(L^2_{sym}({\mathbb S}))$} and a real $\delta >0$, 
we compute
\begin{equation*}
\begin{split} 
&{\mathbb E}^{{\mathbb P}_T^{F,u}} \bigl[ \varphi\bigl(X_{T+\delta}^{\mu,v}\bigr) \bigr]
- 
{\mathbb E}^{{\mathbb P}_T^{F,u}} \bigl[ \varphi\bigl(X_{T+\delta}^{\mu,u}\bigr) \bigr]
 ={\mathbb E}^{{\mathbb P}_T^{F,u}} \Bigl[  {\mathbb E}^{{\mathbb P}_T^{F,u}}
\bigl[ \varphi\bigl(X_{T+\delta}^{\mu,v}\bigr)  
- 
 \varphi\bigl(X_{T+\delta}^{\mu,u}\bigr) \vert {\mathcal F}_T \bigr] \Bigr].
\end{split} 
\end{equation*} 
Since $\d {\mathbb P}_T^{F,u}/\d {\mathbb P}$ is ${\mathcal F}_T$-measurable, 
we have  
\begin{equation*}
  {\mathbb E}^{{\mathbb P}_T^{F,u}}
\bigl[ \varphi\bigl(X_{T+\delta}^{\mu,v}\bigr)  
- 
 \varphi\bigl(X_{T+\delta}^{\mu,u}\bigr) \vert {\mathcal F}_T \bigr]
 = P_\delta^\mu \varphi \bigl( X_T^{\mu,v} \bigr) 
 - P_\delta^\mu \varphi\bigl( X_T^{\mu,u} \bigr),
 \end{equation*}
 where 
 $(P_t^{\mu})_{t \geq 0}$ denotes the semi-group generated by 
 the linearly drifted RSHE, see 
the proof of 
Lemma \ref{prop:5:5:rev}. {Because the estimate on the difference 
$\| X_T^{\mu,v} - X_T^{\mu,u} \|_2$
used in the proof of 
Lemma
 \ref{prop:5:5:rev}
holds almost surely, see
Lemma 
\ref{lem:reg:OU:RSHE}, the result of Lemma
 \ref{prop:5:5:rev} remains true under 
${\mathbb P}_T^{F,u}$}, namely
 \begin{equation*} 
d_{\rm TV} \bigl( {\mathscr L}_{{\mathbb P}_T^{F,u}}(X_{T+\delta}^{\mu,v}) , {\mathscr L}_{{\mathbb P}_T^{F,u}}(X_{T+\delta}^{\mu,u}) \bigr) \leq 
c_\lambda \, \delta^{-\frac{1+\lambda}2} 
\exp ( -  \mu  {T}  )
 \| u - v \|_2. 
 \end{equation*}  
Then, by Lemma 
\ref{lem:dTV:by:entropy}, 
 \begin{equation*} 
d_{\rm TV} \bigl( {\mathscr L}_{{\mathbb P}_{T+\delta}^{F,u}}(X_{T+\delta}^{\mu,u}) , {\mathscr L}_{{\mathbb P}_{T+\delta}^{F,u}}(X_{T+\delta}^{\mu,v}) \bigr) \leq 
c_\lambda \, \delta^{-\frac{1+\lambda}2}  
\exp ( - \mu  {T}  )
 \| u - v \|_2 + 2 \, C_{F,\lambda,R} \,  {(1+\mu)} \, \delta^{\frac{1-\lambda}{2}}.  
 \end{equation*} 
 This completes the proof, recalling that $u,v$ have $L^2$ norms bounded by $R$. 
 \end{proof}

 \begin{lemma}
\label{lem:DKL:2}
There exist a constant $C_{F,\lambda,R}$, independent of $\mu$, and a constant $C_\mu$, only depending on $\mu$, such that, for $\| u \|_2,\| v \|_2 \leq R$, 
\begin{equation*} 
d_{\rm TV} \bigl( {\mathscr L}_{{\mathbb P}_{T+\delta}^{F,u}}(X_{T+\delta}^{\mu,v}) , {\mathscr L}_{{\mathbb P}_{T+\delta}^{F,v}}(X_{T+\delta}^{\mu,v}) \bigr) \leq  
 1 - {\tfrac{1}{2}} \exp \Bigl( - C_{F,\lambda,R} \bigl( (T+\delta) + C_\mu \bigr)
\Bigr).
\end{equation*} 
\end{lemma} 
\begin{proof}
 We compute
 the relative entropy of 
 ${\mathbb P}_{T+\delta}^{F,v}$
 with respect to 
 ${\mathbb P}_{T+\delta}^{F,u}$. Using 
 the expression of each of the two densities, see 
  \eqref{eq:PT:F,x}, we obtain 
 \begin{equation}
\label{eq:dTB:last:step:mu:-1}
 \begin{split}
& {\mathbb E}^{{\mathbb P}_{T+\delta}^{F,u}}
 \Bigl[ - 
 \ln \Bigl( \frac{\d {\mathbb P}_{T+\delta}^{F,v}}{\d {\mathbb P}_{T+\delta}^{F,u}}
 \Bigr)\Bigr]
 \\
 &\hspace{5pt} =
 \frac12
 {\mathbb E}^{{\mathbb P}_{T+\delta}^{F,u}}
 \biggl[ 
 \int_0^{T+\delta}  \sum_{m \in {\mathbb N}_0} \lambda_m^{-2}
\bigl\langle  F(X_t^{\mu,u}) - F(X_t^{\mu,v}) + \mu (X_t^{\mu,u} - X_t^{\mu,v}), e_m \bigr\rangle^2
\d t
\biggr].
\end{split} 
\end{equation} 
Proceeding as in \eqref{eq:mu:F:HSPLIT} and \eqref{eq:mu:F:Holder}, we have
\begin{equation} 
\label{eq:dTB:last:step:mu:0}
\begin{split}
& {\mathbb E}^{{\mathbb P}_{T+\delta}^{F,u}} \int_0^{T+\delta}  \sum_{m \in {\mathbb N}_0} \lambda_m^{-2}
\bigl\langle  F(X_t^{\mu,u}) - F(X_t^{\mu,v}) + \mu (X_t^{\mu,u} - X_t^{\mu,v}), e_m \bigr\rangle^2
 \d t
\\
&\leq 
{\lambda_0^{-2}}
  {\mathbb E}^{{\mathbb P}_{T+\delta}^{F,u}}
 \int_0^{T+\delta} 
\bigl\| 
F(X_t^{\mu,u}) - F(X_t^{\mu,v}) +  \mu (X_t^{\mu,u} - X_t^{\mu,v})
\bigr\|_2^2 \d t 
\\
&\hspace{15pt}   + 
 C_{\rm col}    \biggl( 
  {\mathbb E}^{{\mathbb P}_{T+\delta}^{F,u}}
  \int_0^{T+\delta} 
\bigl\| \nabla_x 
\bigl( 
F(X_t^{\mu,u}) - F(X_t^{\mu,v}) +  \mu (X_t^{\mu,u} - X_t^{\mu,v}) \bigr) 
\bigr\|_2^2 \d t  \biggr)^{\lambda} 
\\
&\hspace{30pt}
\times  
 \biggl(
  {\mathbb E}^{{\mathbb P}_{T+\delta}^{F,u}}
   \int_0^{T+\delta} 
\bigl\| 
F(X_t^{\mu,u}) - F(X_t^{\mu,v}) +\mu (X_t^{\mu,u} - X_t^{\mu,v}) 
\bigr\|_2^2 \d t
\biggr)^{1-\lambda}.
\end{split}
\end{equation}
 By Lemma 
 \ref{lem:reg:OU:RSHE}, we obtain 
 \begin{equation}
 \label{eq:dTB:last:step:mu:1}
\begin{split}
&\int_0^{T+\delta}   \bigl\|   \nabla_x   \bigl(X_t^{\mu,u} - X_t^{\mu,v} \bigr) 
\bigr\|_2^2 
\d t \leq  {2} R^2, 
\quad \mu  \int_0^{T+\delta}   \|   X_t^{\mu,u} - X_t^{\mu,v}  
\|_2^2 
\d t
 \leq  {2 R^2}. 
\end{split}
\end{equation}
By Assumption 
\ref{ass drift bounded oscillation}, 
\begin{equation} 
\label{eq:dTB:last:step:mu:2}
\int_0^{T+\delta}  \| F(X_t^{\mu,u}) - F(X_t^{\mu,v})\|_2^2 \d t
\leq (T+\delta) \bigl( C_O(2R) \bigr)^2.  
\end{equation} 
Next,
by Assumption \ref{ass drift eu} and \eqref{eq:dTB:last:step:mu:1}, 
\begin{equation} 
\label{eq:dTB:last:step:mu:3}
\begin{split}
&\int_0^{T+\delta}  \bigl\| \nabla_x \bigl(  F(X_t^{\mu,u}) - F(X_t^{\mu,v}) \bigr) \bigr\|_2^2 \d t
\\
&\leq  {2} C_F  
\int_0^{T+\delta} \Bigl(  {2} +  \bigl\| \nabla_x X_t^{\mu,u} \bigr\|_2^2 + 
\bigl\| \nabla_x X_t^{\mu,v} \bigr\|_2^2
\Bigr)  \d t
\\
&\leq  {2} C_F\left(
 {2 (T+\delta)}
+
 {4}
 R^2 +  {3} \int_0^{T+\delta}   \bigl\| \nabla_x X_t^{\mu,u} \bigr\|_2^2  \d t\right).
\end{split}
\end{equation}  
Following the first step 
in the proof of 
Lemma 
\ref{lem unif est pair}
(which is based on \eqref{ito:square}), 
\begin{align}
\label{eq:new:proof:c1cP}
	&
2 \bigl( 1 - c_1 c_P^2 \bigr) 
  {\mathbb E}^{{\mathbb P}_{T+\delta}^{F,u}}
   \int_0^{T+\delta} 
\bigl\| 
\nabla_x X_t^{\mu,u}  
\bigr\|_2^2 \d t
	\leq \| u \|_2^2 + 
	\bigl( 2 C_L +  c_\lambda 
	\bigr) 
\bigl( T + \delta \bigr). 
\end{align} 
Collecting 
\eqref{eq:dTB:last:step:mu:-1},
\eqref{eq:dTB:last:step:mu:0},
\eqref{eq:dTB:last:step:mu:1},
\eqref{eq:dTB:last:step:mu:2}, 
\eqref{eq:dTB:last:step:mu:3}
and
\eqref{eq:new:proof:c1cP},
\begin{equation*} 
\begin{split}
 {\mathbb E}^{{\mathbb P}_{T+\delta}^{F,u}}
 \Bigl[ - 
 \ln \Bigl( \frac{\d {\mathbb P}_{T+\delta}^{F,v}}{\d {\mathbb P}_{T+\delta}^{F,u}}
 \Bigr)\Bigr]
& \leq C_{F,\lambda}  \Bigl[ \Bigl( 1+ \bigl(C_O(2R)\bigr)^2+R^2 \Bigr)(T+\delta) + C_R(1+\mu^2) \Bigr]  
\\
&\leq 
C_{F,\lambda,R} \bigl( (T+\delta) + C_\mu \bigr).
\end{split}
\end{equation*} 
{The result follows 
from
Tsybakov inequality
(also referred to as the Bretagnolle-Huber inequality), see  
\cite[(2.25), p.89]{Tsybakov}.}
 \end{proof} 
 
 We now complete the proof
 of the coupling inequality:
\begin{proof}[Proof of  Proposition \ref{prop couplingFromBall}.]
 
Fix $R>0$. We deduce from Lemmas \ref{lem:DKL:1}
 and
\ref{lem:DKL:2} that
  \begin{equation*} 
  \begin{split} 
&d_{\rm TV} \bigl( {\mathscr L}_{{\mathbb P}_{T+\delta}^{F,u}}(X_{T+\delta}^{\mu,u}) , {\mathscr L}_{{\mathbb P}_{T+\delta}^{F,v}}(X_{T+\delta}^{\mu,v}) \bigr) 
\\
&\leq 
C_{F,\lambda,R} \left(\delta^{-\frac{1+\lambda}{2}}\exp\bigl( - \mu   {T} \bigr) +  {(1+\mu)} \delta^{\frac{1-\lambda}{2}} \right)   + 1 -  {\frac12} \exp \bigl( -C_{F,\lambda,R} ( (T+\delta) + C_\mu )
\bigr).
\end{split}  
 \end{equation*}
Choosing $\delta = \exp(-\mu T/2)$ (with $T$ large so that $\delta \le T/2$, which is doable --independently of $\mu$-- if $\mu\geq 1$ for instance)
and using the fact that $\lambda \in (\nicefrac{1}2,1)$, we get 
 \begin{equation*} 
  \begin{split} 
&d_{\rm TV} \bigl( {\mathscr L}_{{\mathbb P}_{T+\delta}^{F,u}}(X_{T+\delta}^{\mu,u}) , {\mathscr L}_{{\mathbb P}_{T+\delta}^{F,v}}(X_{T+\delta}^{\mu,v}) \bigr) 
\\
&\leq 1 +  C_{F,\lambda,R} \Bigl[  
\exp\Bigl(  - \mu  {\frac{T}2} \Bigr)
+  {(1+\mu)} \exp \Bigl( - \frac{(1-\lambda)\mu T}{4} \Bigr) \Bigr]   - \frac12 \exp \bigl( - C_{F,\lambda,R} ( (T+\delta) + C_\mu )
\bigr)
\\
&\leq 1 + \exp \bigl( - 2 C_{F,\lambda,R} T \bigr) \Bigl[ C_{F,\lambda,R} \exp \Bigl( \bigl(2 C_{F,\lambda,R} -  {\frac{\mu}2} \bigr) T \Bigr)
\\
&\hspace{15pt} +  {(1+\mu) \, C_{F,\lambda,R} } \exp \Bigl( \bigl(2 C_{F,\lambda,R}  -\frac{(1-\lambda)\mu }{4} \bigr) T \Bigr)   
 -  {\frac12} \exp \bigl( - C_{F,\lambda,R}C_{\mu}   \bigr) 
\Bigr]. 
\end{split}  
 \end{equation*}
 Choosing $(1-\lambda)\mu  > 8 C_{F,\lambda,R}$, we see that the limit, as $T$ tends to $+\infty$, 
 of the term inside the bracket is negative. 
 
 Recalling that 
 $ {\mathscr L}_{{\mathbb P}_{T+\delta}^{F,u}}(X_{T+\delta}^{\mu,u}) =  \cP_{T+\delta}(u,\cdot) $
 and 
 $ {\mathscr L}_{{\mathbb P}_{T+\delta}^{F,v}}(X_{T+\delta}^{\mu,v}) =  \cP_{T+\delta}(v,\cdot) $, 
 we complete the proof. 
 \end{proof} 
%

\subsection{Harris Theorem}
We now conclude via the Harris theorem that we do indeed have exponential convergence to stability. We reference this theorem as it appears in Hairer, Mattingley and Scheutzow \cite{hairerMattinglyScheutzow2011harris}.

\begin{theorem}[Harris]
\label{thm Harris} Let $(P_t)_{t \geq 0}$ be a Markov semigroup over a Polish space $\cX$ such that there exists a Lyapunov function ${\mathcal V}$, i.e. a function ${\mathcal V}:\cX \rightarrow [0,\infty]$ such that $P_t {\mathcal V}(x)<\infty$ for all $x\in\cX$ and $t\geq 0$ 
satisfying, for every $x \in {\mathcal X}$ and $t\geq 0$, the bound 
\begin{equation}
\label{eq lyapHMS}
P_t{\mathcal V}(x)\leq C_{\mathcal V}e^{-\gamma_{\mathcal V} t}{\mathcal V}(x)+ K_{\mathcal V}, 
\end{equation}
for some constants $C_{\mathcal V},\gamma_{\mathcal V},K_{\mathcal V}>0$.
Assume further that the level sets of ${\mathcal V}$, defined by $A_{\mathcal V}(C):=\{x\in \cX : {\mathcal V}(x)\leq C \}$, are small, i.e. for every $C$, there exists a time $ T>0$ and a constant $\varepsilon>0$ such that for every $x,y \in A_{\mathcal V}(C)$, 
\begin{equation}
\label{eq small}
d_{\rm TV}\bigl( \cP_T(x,\cdot), \cP_T(y,\cdot)\bigr) \leq  1 -\varepsilon,
\end{equation}
{with $(\cP_t)_{t \geq 0}$ denoting the transition kernels as in the statement of 
Theorem 
\ref{thm expStab}.}

Then, $(P_t)_{t \geq 0}$ has a unique invariant measure $\mu_{\textrm{\rm inv}}$ and, for any $t \geq 0$, 
$
\lVert \cP_{{t}}(x,\cdot)-\mu_{\textrm{\rm inv}} \rVert_{TV}\leq Ce^{-\gamma t}(1+{\mathcal V}(x))$ for some positive constants $C$ and $\gamma$.
\end{theorem}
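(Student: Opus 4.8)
The plan is to exhibit, for a suitable time $T>0$ and weight $\beta>0$, a strict contraction of the one-step kernel $P_T$ in a weighted Kantorovich distance blending $V$ with the indicator of the diagonal, and then to invoke the Banach fixed point theorem. Following Hairer, Mattingly and Scheutzow, set, for $x,y\in\cX$,
\begin{equation*}
d_\beta(x,y):=\bigl(2+\beta V(x)+\beta V(y)\bigr)\1_{\{x\neq y\}},
\end{equation*}
and define on probability measures the associated cost
\begin{equation*}
d_\beta(\mu,\nu):=\inf_{\pi\in\Pi(\mu,\nu)}\int_{\cX\times\cX}d_\beta(x,y)\,\pi(\d x,\d y),
\end{equation*}
with $\Pi(\mu,\nu)$ the set of couplings. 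Three elementary facts will be used: $d_\beta$ restricts to a complete metric on $\{\mu:\int V\,\d\mu<\infty\}$; since $d_\beta\geq 2\,\1_{\{x\neq y\}}$ one has $d_{\rm TV}(\mu,\nu)\leq d_\beta(\mu,\nu)$ (with the total-variation normalisation for which \eqref{eq small} reads ``$\leq 2-\varepsilon$''); and $d_\beta(\delta_x,\delta_y)=d_\beta(x,y)$.

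\textit{The contraction estimate.} The heart of the matter is to find $\beta>0$, $T>0$ and $\bar\alpha\in(0,1)$ with $d_\beta(P_T\mu,P_T\nu)\leq\bar\alpha\,d_\beta(\mu,\nu)$. By convexity of $d_\beta$ in its arguments and by disintegrating an optimal coupling of $\mu$ and $\nu$, it suffices to bound $d_\beta(\cP_T(x,\cdot),\cP_T(y,\cdot))$ by $\bar\alpha\,d_\beta(x,y)$ for $x\neq y$. Fix the level $C$ in \eqref{eq small}, let $T$ be a corresponding time (taken large enough that $\gamma_0:=C_Ve^{-\gamma_VT}<1$), write $S:=V(x)+V(y)$ and $K_0:=K_V$; iterating \eqref{eq lyapHMS} gives $P_T V(x)\leq\gamma_0 V(x)+K_0$. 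Distinguish two cases.
\begin{itemize}
\item If $\max(V(x),V(y))>C$, use \emph{any} coupling: by definition of $d_\beta$ on measures,
\begin{equation*}
d_\beta\bigl(\cP_T(x,\cdot),\cP_T(y,\cdot)\bigr)\leq 2+\beta P_T V(x)+\beta P_T V(y)\leq 2+\beta\gamma_0 S+2\beta K_0,
\end{equation*}
and since $S>C$ one has $2+\beta\gamma_0 S+2\beta K_0\leq\bar\alpha(2+\beta S)$ as soon as $\bar\alpha>\gamma_0$ and $C$ is chosen large (in terms of $\beta,\gamma_0,K_0,\bar\alpha$).
\item If $\max(V(x),V(y))\leq C$, the minorisation \eqref{eq small} provides a coupling $(X_T,Y_T)$ of $\cP_T(x,\cdot)$ and $\cP_T(y,\cdot)$ with $\P(X_T=Y_T)\geq\varepsilon/2$; then, bounding $\E[V(X_T)\1_{\{X_T\neq Y_T\}}]\leq P_T V(x)\leq\gamma_0 C+K_0$ and likewise for $Y_T$,
\begin{equation*}
d_\beta\bigl(\cP_T(x,\cdot),\cP_T(y,\cdot)\bigr)\leq 2\bigl(1-\tfrac{\varepsilon}{2}\bigr)+2\beta(\gamma_0 C+K_0),
\end{equation*}
which is $\leq 2\bar\alpha\leq\bar\alpha\,d_\beta(x,y)$ once $\bar\alpha>1-\varepsilon/2$ and $\beta$ is small enough.
\end{itemize}
Choosing $\bar\alpha\in(1-\varepsilon/2,1)$, then $T$ large, then $C$ large, then $\beta$ small makes both bounds hold with the same $\bar\alpha$.

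\textit{From the contraction to ergodicity.} With the contraction at hand, Banach's theorem on the complete space $(\{\mu:\int V\,\d\mu<\infty\},d_\beta)$ yields a unique $P_T$-invariant $\mu_{\rm inv}$; since $P_s\mu_{\rm inv}$ is again $P_T$-invariant, $P_s\mu_{\rm inv}=\mu_{\rm inv}$ for all $s\geq 0$, so $\mu_{\rm inv}$ is invariant for the whole semigroup and is the only such measure. Integrating \eqref{eq lyapHMS} against $\mu_{\rm inv}$ (first with $V$ replaced by $V\wedge N$, then letting $N\to\infty$ by monotone convergence) gives $\int V\,\d\mu_{\rm inv}\leq K_V/(1-\gamma_0)<\infty$. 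Iterating the contraction, $d_\beta(\cP_{kT}(x,\cdot),\mu_{\rm inv})\leq\bar\alpha^k\,d_\beta(\delta_x,\mu_{\rm inv})\leq\bar\alpha^k\bigl(2+\beta V(x)+\beta\int V\,\d\mu_{\rm inv}\bigr)$. Using $d_{\rm TV}\leq d_\beta$ together with the contractivity of Markov kernels in total variation to interpolate over $t\in[kT,(k+1)T)$, namely $d_{\rm TV}(\cP_t(x,\cdot),\mu_{\rm inv})=d_{\rm TV}(P_{t-kT}\cP_{kT}(x,\cdot),P_{t-kT}\mu_{\rm inv})\leq d_{\rm TV}(\cP_{kT}(x,\cdot),\mu_{\rm inv})$, we obtain $\lVert\cP_t(x,\cdot)-\mu_{\rm inv}\rVert_{TV}\leq Ce^{-\gamma t}(1+V(x))$ with $\gamma:=-T^{-1}\ln\bar\alpha>0$.

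\textit{The main obstacle.} Everything hinges on the contraction estimate, and within it on the joint calibration of the free parameters: the horizon $T$ (needed large, so that the Lyapunov factor $\gamma_0=C_Ve^{-\gamma_VT}$ is below $1$ and indeed small), the level $C$ of the small set (needed large, so that off $A_V(C)$ the Lyapunov decay dominates the additive constant), and the weight $\beta$ (needed small, so that on $A_V(C)$ the coupling gain $1-\varepsilon/2$ beats the spurious mass $\beta(\gamma_0 C+K_0)$). One must verify that these requirements can be met simultaneously — in particular that a time $T$ at which the minorisation \eqref{eq small} holds for the chosen level can also be taken large enough for the Lyapunov step — which is the only genuinely delicate point; completeness of $(\{\mu:\int V\,\d\mu<\infty\},d_\beta)$, the convexity and attainability of the coupling infimum, the reduction to Dirac data, and the passage from multiples of $T$ to arbitrary times are all routine.
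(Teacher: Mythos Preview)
The paper does not prove this theorem; it is quoted from Hairer--Mattingly--Scheutzow and used as a black box to derive Theorem~\ref{thm expStab}. Your proposal supplies the standard weighted-norm contraction argument of Hairer and Mattingly, and the architecture (the distance $d_\beta$, the two-case contraction estimate, Banach's fixed point, interpolation in $t$ via TV-contractivity) is the right one.

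There is, however, a circularity in your calibration of parameters. You write ``Choosing $\bar\alpha\in(1-\varepsilon/2,1)$, then $T$ large, then $C$ large, then $\beta$ small'', but $\varepsilon=\varepsilon(C)$ is only available \emph{after} $C$ is fixed, so $\bar\alpha$ cannot come first; you also begin by ``Fix the level $C$'' and end by ``then $C$ large''. The correct order is: (i) take $T$ large enough that $\gamma_0:=C_Ve^{-\gamma_VT}<1$; (ii) choose $C>2K_V/(1-\gamma_0)$, which depends only on the Lyapunov constants; (iii) invoke the small-set hypothesis at this level to obtain $\varepsilon$ and a time $T'$, noting that TV-contractivity of Markov kernels makes the minorisation persist at all times $\geq T'$, so one may replace $T$ by $\max(T,T')$ without loss; (iv) pick $\beta<\tfrac{\varepsilon/2}{\gamma_0 C+K_V}$; (v) set $\bar\alpha:=\max(r_1(\beta),r_2(\beta))$, where $r_1,r_2$ are your two case bounds. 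The point that makes this consistent is that with $C$ chosen as in (ii), your Case~1 ratio satisfies $r_1(\beta)<1$ for \emph{every} $\beta>0$ (the condition $C>2K_V/(1-\gamma_0)$ is $\beta$-free), so there is no conflict with the smallness of $\beta$ needed in Case~2. This resolves the ``delicate point'' you correctly flag but do not close in your final paragraph.
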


\begin{proof}[Proof of Theorem \ref{thm expStab}] 
The square of the $L^2$ norm, $\lVert\cdot\rVert_2^2$, serves as a Lyapunov function for the semigroup $(P_t)_{t \geq 0}$. This can be observed from
Lemma 
\ref{lem unif est pair}.  Smallness of the level sets $A_{\lVert\cdot\rVert_2^2}(C)$ follows from 
Proposition 
\ref{prop couplingFromBall}. 
\color{black}
%
\end{proof}

\section{Stochastic Gradient Descent}
\label{sec SGD}


\subsection{The Drift as the Derivative of a Potential}
\label{subse:derivative:Q}

We now address the key example of when $F$ derives from a potential, namely
\begin{equation*}
F(u)(x) = -\partial_{\mu} V\bigl(
{\rm Leb}_{\mathbb S} \circ u^{-1}\bigr)(u(x)), \quad x \in {\mathbb S},
\end{equation*}
where $u$ in the left-hand side is an element of 
$U^2({\mathbb S})$ and 
$V$ in the right-hand side is a mean-field function, i.e., $V : {\mathcal P}({\mathbb R}) \rightarrow 
{\mathbb R}$. We call $V$ a potential of $F$. In the first instance, $V$ is assumed to be continuously differentiable in Lions' manner, but later we will allow the derivative to be defined in a weaker sense, see Definition 
\ref{def:derivative:mathscr DV}.
 
The corresponding drifted form of the rearranged stochastic heat equation reads as
\begin{equation}
\label{SGD}
\d X_t(x) = - \partial_{\mu} V \bigl( {\mathscr L}(X_t) \bigr)\bigl( X_t(x) \bigr) \d t + 
\Delta_x X_t(x) \d t + \d W_t(x) + \d \eta_t(x), \quad x \in {\mathbb S}, \ t \geq 0, 
\end{equation}
where ${\mathscr L}(X_t)$ is the law of $X_t$, when regarded as a real-valued random variable 
on ${\mathbb S}$ (equipped with the Lebesgue measure), i.e. 
${\mathscr L}(X_t)= 
\textrm{\rm Leb}_{\mathbb S} \circ X_t^{-1}$.  
We then refer to the above dynamics as a stochastic gradient descent on the space of probability measures.
It coincides with 
\eqref{eq:MKV:3}.  
The following remarks are in order: 
\begin{enumerate}
\item It is an easy exercise to reformulate the conditions $(i)$ and $(ii)$ in the statement of 
Proposition \ref{prop:SGD:example}. We leave this to the reader. 
\item When $x \in {\mathbb S} \mapsto \partial_\mu V(\textrm{\rm Leb}_{\mathbb S} \circ u^{-1})(u(x))$ satisfies 
$(i)$ in the statement of Proposition 
\ref{prop:SGD:example}
and $V$
is invariant by translation, i.e. 
$V(\mu) = V(\mu \circ \tau_y^{-1})$ for any $y \in {\mathbb R}$, with 
$\tau_y$ denoting the translation (in ${\mathbb R}$) of vector $y$, then, for any $c>0$, 
the derivative of the (penalized) potential $V(\mu) + c \int_{\mathbb R} \vert y \vert^2 \d \mu(y)$ satisfies 
(up to a sign $-$)
$(i)$ and $(ii)$ in the statement of Proposition 
\ref{prop:SGD:example}. Indeed, invariance by translation implies
\begin{equation*} 
{\mathbb E} \bigl[ \partial_\mu V\bigl( {\mathscr L}(X)\bigr)(X) \bigr] =0,
\end{equation*} 
for any random variable $X$ constructed on any probability space (with ${\mathbb E}$ as expectation). 
For instance, this is the case if 
\begin{equation*} 
V(\mu) =   \int_{\mathbb R} \int_{\mathbb R} w(y-z) \d \mu(y) \d\mu(z),
\end{equation*} 
for $w : {\mathbb R} \rightarrow {\mathbb R}$ a  function
with bounded second order derivatives. Invariance by translation (of this example) is indeed obvious. One has 
\begin{equation*} 
\partial_\mu V(\mu)(y) = \int_{\mathbb R} \nabla w(y-z) d\mu(z) -
\int_{\mathbb R} \nabla w(z-y) d\mu(z),
\end{equation*} 
from which the  condition $(i)$
(in presence of an additional penalty  $c \int_{\mathbb R} \vert y \vert^2 d \mu(y)$) easily follows. 
\end{enumerate}

In what follows, we address a relaxed version of \eqref{SGD} in which 
the derivative of the potential $V$ is understood in a weaker sense. This extended notion of 
derivative relies on the following definition: 

\begin{definition}
\label{def:derivative:mathscr DV}
We say that a function $V : U^2({\mathbb S}) \rightarrow {\mathbb R}$ is continuously differentiable (on $U^2({\mathbb S})$) if there exists 
a continuous function 
${\mathscr D} V : U^2({\mathbb S}) \rightarrow L_{\rm sym}^2({\mathbb S})$ such that, for any two elements $u$ and $v$ in $U^2({\mathbb S})$,  
\begin{equation*} 
V(v) = V(u) + \int_0^1 
\bigl\langle 
{\mathscr D} V\bigl( t v + (1- t) u \bigr), v-u
\bigr\rangle_2   \d t. 
\end{equation*} 
\end{definition} 

Of course, it should be noticed that, if $V$ is Lions continuously differentiable, then it is 
continuously differentiable in the above sense, with 
\begin{equation*}
{\mathscr D} V(u)(x) = \partial_\mu V\bigl( \textrm{\rm Leb}_{\mathbb S} \circ u^{-1} \bigr)\bigl( u(x)\bigr), 
\quad \textrm{Leb}_{\mathbb S} \  \textrm{\rm a.e.} \ x \in {\mathbb S}. 
\end{equation*}
However, the converse is not true. Take for instance the function 
$V(u) = \| u-u_0\|_2^2$, for $u \in U^2({\mathbb S})$,
where $u_0$ is a fixed element of $U^2({\mathbb S})$. Then, there is no difficulty to see that 
\begin{equation*} 
{\mathscr D} V(u)(x) = 2 \bigl(u(x)-u_0(x)\bigr).
\end{equation*} 
However, the function may not be Lions continuously differentiable 
as it coincides in fact with the mean field function $\mu \mapsto {\mathcal W}_2(\mu,\mu_0)^2$ where 
$\mu_0$ denotes the law of $u_0$ (under the Lebesgue measure), see for instance \cite{refId0}. 

Intuitively, the difficulty with the Lions derivative lies in the \textit{lifting} operation. Indeed, 
the lift on the entire $L^2_{\textrm{\rm sym}}({\mathbb S})$-space
of a real-valued function 
$V$ originally defined 
on $U^2({\mathbb S})$
 is given by $L^2_{\textrm{\rm sym}}({\mathbb S}) \ni u \mapsto V(u^*)$,
but the rearrangement mapping $u \mapsto u^*$ may be singular. 
In contrast, when one restricts the notion of differentiability to $U^2({\mathbb S})$, one avoids 
all the issues related with the rearrangement.  
This is an important observation that allows for the consideration of more examples. 

Naturally, there is a price to pay for weakening the notion of derivative. In brief, Lions' representation
is no longer true in full generality, meaning that we can find instances of $V$ and of $u \in U^2({\mathbb S})$ such that the derivative 
${\mathscr D} V(u)$ is not $\sigma(u)$-measurable. A prototype for this is 
\begin{equation*}
V(u) = \int_{\mathbb S} g\bigl(x,u(x)\bigr) \d x,
\end{equation*}
for a smooth function $g : {\mathbb S} \times {\mathbb R} \ni (x,y) \mapsto g(x,y)$. 
Clearly, the derivative reads 
${\mathscr D} V(u)(x) = \partial_y g(x,u(x))$, for $x \in {\mathbb S}$, 
and may not be 
a function of $u$ (think of $u$ as being a constant function). 

\begin{remark}[Potential structure of the stochastic gradient Descent]
Returning to \eqref{SGD}, one may be interested in having an interpretation of the Laplacian as the derivative of a potential. In other words, it is natural 
 to wonder about the effective potential that drives the dynamics. 
As one might expect, we 
may indeed
 regard 
the dynamics in 
\eqref{SGD} as a stochastic gradient descent deriving from the potential
\begin{equation}
\label{eq:potential:L2} 
V(u) + \tfrac{1}2 \bigl\| \nabla u\bigr\|_2^2, \quad 
u \in U^2({\mathbb S}),
\end{equation}
whose G\^ateaux derivative (inside the cone $U^2({\mathbb S})$), at any smooth $u$ and along any $v-u$ for $v \in U^2({\mathbb S})$, indeed reads
(using \eqref{eq:Lions:derivative})
\begin{equation*}
\begin{split}
&\frac{\d}{\d \varepsilon} \Bigl[ V\bigl(  {u} + \varepsilon (v-u) \bigr) + \tfrac{1}{2} \bigl\|  {  \nabla (u+\varepsilon(v-u))} \bigr\|_2^2
\Bigr]
 {\bigg|_{\varepsilon=0} }= \bigl\langle  {\mathscr D} V(u),v-u 
\bigr\rangle_{2}
-   \langle \Delta u , v-u
   \rangle_{2}.
\end{split}
\end{equation*}
Despite the potential structure of the equation \eqref{SGD}, it is fair to say that we have no information about the precise shape of 
the invariant measure. In particular, we do not have an example
with an explicit 
Gibbs factorization. To the best of our understanding, this is a generic fact given our construction of the rearranged SHE 
because the non-drifted dynamics is indeed non-symmetric. To see this,
we claim that the reflection term in the rearranged SHE should be thought as 
a projection term in $L_{\rm sym}^2({\mathbb S})$. Whilst the proof of this latter assertion would go beyond the scope of this paper, we feel it sufficient to refer here to Brenier's work \cite{Brenier1} in the deterministic case for 
some intuition. Then, symmetry of the dynamics could be obtained 
if the noise itself were isotropic in $L_{\rm sym}^2({\mathbb S})$, which is 
for instance exactly what is done by Zambotti in \cite{zambotti2001intByParts} for the reflected SPDE of Nualart and Pardoux \cite{NualartPardoux}. 
In our case, the noise is anisotropic: this is a crucial assumption 
in the construction carried out  
\cite{delarueHammersley2022rshe} and, at this stage, we are unable to remove it. 

While anisotropy of the noise may at first glance look like a serious drawback of 
our model, we show in the next section that one can use a coloured noise 
that acts isotropically on a large (finite) collection of Fourier modes. 
Such a construction allows us to recover some of the metastability features of 
Gibbs measures at a low temperature. 
\end{remark}

\subsection{Vanishing Viscosity}
\label{sec ito}	 

The purpose of this subsection is to provide
some metastability properties of the (stochastic) gradient descent when the noise has a small intensity. The results that are proven hold for a convenient scaling between the intensities of the noises applied to each Fourier mode 
of the solution and the intensity of the Laplace friction term.  

As we have almost no information on the structure of the invariant measure of the stochastic gradient descent, the metastability properties that are stated below are important. 
To summarise, they say that, under the aforementioned convenient scaling between the noises and the Laplace friction term, 
the time spent by the stochastic gradient descent in a local well of the potential $V$ 
grows exponentially fast in relation to $\varepsilon^{-2}$, where $\varepsilon$ 
is the parameter encoding the intensity of the noise. 

To clarify, here is the main equation we are dealing with in this section: 
\begin{equation}
\label{eq:SGD:small:visco}
\begin{split}
\d X_t^{\varepsilon}(x) &= - {\mathscr D} V \bigl(  X_t^{\varepsilon}  \bigr)(x) \d t 
 + 
\varepsilon^{2\alpha} \Delta_x X_t^{\varepsilon}(x) \d t + 
\varepsilon \d W_t^{\varepsilon}(x) + \d \eta_t^{\varepsilon}(x), \quad t \geq 0, \ x \in {\mathbb S}, 
\end{split}
\end{equation}
for a potential  {$V : U^2({\mathbb S}) \rightarrow {\mathbb R}$} that is continuously differentiable in the sense of the derivative ${\mathscr D}$ introduced in Definition 
\ref{def:derivative:mathscr DV}, for some intensity parameter $\varepsilon >0$ (driving the intensity of the noise)
and for some exponent $\alpha >0$ (driving the strength of the friction term), where 
$(W_t^{\varepsilon})_{t \geq 0}$ is the coloured noise having spectral decomposition
\begin{equation}
\label{eq:notations:visco:1}
W_t^{\varepsilon}(\cdot) 
: = \sum_{k \in  {\mathbb N_0}} \lambda_k^{\varepsilon} {\beta}_t^k {e_k(\cdot)}, 
\end{equation}
with 
\begin{equation}
\label{eq:notations:visco:2} 
\lambda_k^{\varepsilon} :=
\left\{ 
\begin{array}{ll}
\sqrt{\vartheta_1} &\quad \textrm{\rm if} \quad 
k \leq \varepsilon^{-\beta}
\\
\sqrt{\vartheta_2} &\quad \textrm{\rm if} 
\quad \varepsilon^{-\beta} < k \leq 
\psi \varepsilon^{-\beta}
\\
k^{-\lambda} &\quad \textrm{\rm if} 
\quad 
k > \psi \varepsilon^{-\beta}
\end{array}
\right.,
\end{equation}
for some positive parameters $\beta,\vartheta_1,\vartheta_2$ and $\psi$ (with $ \psi >1$) whose values will be fixed later on {(as before, $\lambda \in (1/2,1)$)}. 
The main statement of this section is: 
\begin{theorem}
\label{thm:meta}
Assume that $V$
is continuously differentiable in the sense of 
Definition 
\ref{def:derivative:mathscr DV}
and that ${\mathscr D} V$ satisfies assumptions \ref{ass drift eu}, \ref{ass drift erg} and \ref{ass drift bounded oscillation}. {Assume further that there exist an $X_0 \in H^1_{\rm sym}({\mathbb S})$, with $\mathscr D V(X_0)=0$,
and a real $a>0$, such that ${\mathscr D} V$ 
is Lipschitz continuous 
on
${\mathcal O} := U^2({\mathbb S}) \cap B_{L^2}(X_0,a)$ (with 
$B_{L^2}(X_0,a)$ being the $L^2({\mathbb S})$-ball of centre $X_0$ and radius $a$)
and satisfies, for any $X \in {\mathcal O}$,}
\begin{equation}
\label{eq:monotonicity:weak} 
\bigl\langle {\mathscr D} V(X) ,X-X_0\bigr\rangle \geq  \kappa \| X - X_0 \|_2^2. 
\end{equation} 
{Set $\alpha=\beta=2$ in 
\eqref{eq:notations:visco:2}, 
denote  $(X_t^{\varepsilon})_{t \geq 0}$ the solution to \eqref{eq:SGD:small:visco}
with $X_0$ as initial condition 
and let 
$\tau_{\varepsilon} := \inf \{ t \geq 0 : \| X_t^{\varepsilon}  - X_0\|_2 \geq a \}$.}
Then, for any $a_0>1$, such that $a \in [1/a_0,a_0]$, we can tune $\vartheta_1,\vartheta_2$ and $\psi$ in 
\eqref{eq:notations:visco:2} in such a manner that, for a constant $C>1$ depending on $\vartheta_1$, $\vartheta_2$, $\kappa$, $\psi$ and $a_0$ (but not on $a$),  
and for $\varepsilon \in (0,\varepsilon_0)$, 
with $\varepsilon_0$ depending on $\vartheta_1$, $\vartheta_2$, $\kappa$, $\psi$, $a_0$ and $\| \nabla X_0 \|_2$,
\begin{equation}
\label{eq:stoppingtime:bounds:CV}  
\exp \bigl( \frac{a^2}{C \varepsilon^2} \bigr)
\leq 
{\mathbb E} \bigl( \tau_{\varepsilon} \bigr) \leq \exp \Bigl( \frac{C a^2 ( a^2 + \| X_0 - \bar{X_0} \|_2^2)}{\varepsilon^2} \Bigr),
\end{equation}
where {$\bar X_0 := \int_{\mathbb S} X_0(x) \d x$}.
\end{theorem}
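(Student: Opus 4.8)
The plan is to establish \eqref{eq:stoppingtime:bounds:CV} via the classical Freidlin--Wentzell-type strategy adapted to our infinite-dimensional reflected setting, using the It\^o expansion from Proposition \ref{prop ito fsquare} applied to $t \mapsto \| X_t^{\varepsilon} - X_0\|_2^2$. The key observation is that the reflection term $\eta^{\varepsilon}$, although a priori problematic, contributes favourably (or at worst innocuously) to the dynamics of $\| X_t^{\varepsilon} - X_0 \|_2^2$ when tested against the appropriate quantities, by item 2 in Definition \ref{def:existence} together with the non-increasing structure; consistently one works, as in the proof of Proposition \ref{prop ito fsquare}, at the level of the mollified process $e^{\varepsilon^{2\alpha}\Delta}$ and then passes to the limit using \eqref{def RSHE 2}. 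After expanding, one gets a drift consisting of: a dissipative contribution $-2\varepsilon^{2\alpha}\|\nabla_x X_t^\varepsilon\|_2^2$ from the Laplacian (which, after an integration by parts producing a $-\langle \Delta X_t^\varepsilon, X_0\rangle$ cross-term controlled using $X_0 \in H^1_{\rm sym}$ and $\mathscr D V(X_0)=0$), a term $-2\langle \mathscr D V(X_t^\varepsilon), X_t^\varepsilon - X_0\rangle$ which by the local one-sided bound \eqref{eq:monotonicity:weak} is $\leq -2\kappa\|X_t^\varepsilon - X_0\|_2^2$ on $\mathcal O$, and the noise contribution $\varepsilon^2 \sum_k (\lambda_k^\varepsilon)^2$ from the bracket plus the martingale $2\varepsilon\langle X_t^\varepsilon - X_0, \d W_t^\varepsilon\rangle$.

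For the \textbf{lower bound} on $\mathbb E(\tau_\varepsilon)$, I would show that while $X_t^\varepsilon$ stays in $\mathcal O$ the process $\| X_t^\varepsilon - X_0\|_2^2$ behaves like a scalar diffusion with bounded-below negative drift and diffusion coefficient of size $\varepsilon^2 \|X_t^\varepsilon - X_0\|_2^2$ (from the bracket of the martingale), plus the additive noise floor $\varepsilon^2 c_\lambda^\varepsilon$ where $c_\lambda^\varepsilon := \sum_k (\lambda_k^\varepsilon)^2$; here the scaling $\alpha=\beta=2$ with $\vartheta_1,\vartheta_2,\eta$ chosen so that $c_\lambda^\varepsilon$ is of order $1$ (the ``almost white'' regime, first eigenvalues of order $1$ up to rank $\varepsilon^{-2}$) is what forces the exponent $1/\varepsilon^2$. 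A standard exponential-martingale / Kramers-type argument — construct $\varphi(X_t^\varepsilon) = \exp(\lambda \|X_t^\varepsilon - X_0\|_2^2 / \varepsilon^2)$ for suitable $\lambda$, show it is a bounded-from-below supermartingale up to $\tau_\varepsilon$, and read off $\mathbb P(\tau_\varepsilon \leq t)$ — yields $\mathbb E(\tau_\varepsilon) \geq \exp(a^2/(C\varepsilon^2))$. For the \textbf{upper bound}, I would lower bound the probability of escaping the well in one unit of time starting from anywhere in $\mathcal O$: conditionally, the additive noise of intensity $\varepsilon$ on the order-$\varepsilon^{-2}$ low Fourier modes has enough spread that, by a Girsanov/large-deviations lower bound (tilting the noise to push $X^\varepsilon$ across the boundary in time $1$, at cost $\exp(-C(a^2 + \|X_0 - \bar X_0\|_2^2)a^2/\varepsilon^2)$ in relative entropy — the extra $\|X_0-\bar X_0\|_2^2$ accounting for the distance the zeroth mode must travel against its own drift), one has $\mathbb P(\tau_\varepsilon \leq 1 \mid \mathcal F_0) \geq \exp(-C a^2(a^2 + \|X_0 - \bar X_0\|_2^2)/\varepsilon^2)$; a renewal argument then gives the geometric-type tail and hence the upper bound on the mean.

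The \textbf{main obstacle} I anticipate is the rigorous handling of the reflection term $\eta^\varepsilon$ in both directions: for the lower bound one needs that the reflection cannot accelerate escape from the ball $B_{L^2}(X_0,a)$, and for the upper bound one needs that the reflection does not block the Girsanov-driven excursion that realises the escape. For the former, the cleanest route is to avoid $\eta^\varepsilon$ altogether by working with the \emph{driftless} equation under a Girsanov change of measure (exactly as set up in Section \ref{sec exist} and used in Lemma \ref{lem:PF:P}), so that $\eta^\varepsilon$ is the same reflection as in the undrifted case and its action on $\|\cdot - X_0\|_2^2$ can be analysed via the already-established properties \eqref{def RSHE 2} and \eqref{eq:integral:RS:bound:0:bis}; one then only needs to control the Girsanov density, which is exponentially integrable by the arguments of Proposition \ref{prop zeroMode} and Lemma \ref{lem:PF:P}. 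A secondary technical point is the integration-by-parts term $\langle \Delta_x X_t^\varepsilon, X_0\rangle = -\langle \nabla_x X_t^\varepsilon, \nabla_x X_0\rangle$, which requires $\| \nabla_x X_0\|_2 < \infty$ (hence the hypothesis $X_0 \in H^1_{\rm sym}$ and the dependence of $\varepsilon_0$ on $\|\nabla X_0\|_2$) and is absorbed using $\varepsilon^{2\alpha}\langle \nabla_x X_t^\varepsilon, \nabla_x X_0\rangle \leq \tfrac12 \varepsilon^{2\alpha}\|\nabla_x X_t^\varepsilon\|_2^2 + \tfrac12\varepsilon^{2\alpha}\|\nabla X_0\|_2^2$, the first piece being re-absorbed into the dissipation and the second being lower order once $\alpha=2$; I also expect to need the a priori moment and exponential-integrability bounds of Propositions \ref{prop ito fsquare} and \ref{prop zeroMode} to ensure all the stochastic integrals and exponential functionals are genuinely integrable rather than merely local martingales.
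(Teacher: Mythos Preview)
Your lower bound strategy is essentially the paper's: expand the exponential functional $\exp(\sigma\|X_t^\varepsilon-X_0\|_2^2/\varepsilon^2)$, use \eqref{eq:monotonicity:weak} to get a negative drift, note that the reflection contributes with the favourable sign $\langle X_t^\varepsilon-X_0,\d\eta_t^\varepsilon\rangle\le 0$ (since $X_0\in U^2(\mathbb S)$ is non-increasing and $\langle X_t^\varepsilon,\d\eta_t^\varepsilon\rangle\to 0$ by \eqref{def RSHE 2}), and conclude.

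Your upper bound, however, has a genuine gap. The Girsanov tilting argument you sketch would need the reflection not to \emph{block} the tilted excursion towards $\partial\mathcal O$. But the same inequality that helped you for the lower bound now hurts you: $\langle X_t^\varepsilon-X_0,\d\eta_t^\varepsilon\rangle\le 0$ means the reflection systematically pushes the process back towards $X_0$, and this is a pathwise statement that survives any Girsanov change of measure. Your proposed fix --- pass to the driftless equation via Girsanov --- does not touch this: the reflection is identical for the drifted and driftless dynamics, and the problematic term $-\langle X_0,\d\eta_t^\varepsilon\rangle\le 0$ is still there. Nothing in \eqref{def RSHE 2} or \eqref{eq:integral:RS:bound:0:bis} gives you a handle on $\int_0^t\langle X_0,\d\eta_s^\varepsilon\rangle$ beyond the wrong-sign inequality. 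Relatedly, your interpretation of the extra factor $\|X_0-\bar X_0\|_2^2$ as ``the distance the zeroth mode must travel'' is not the mechanism; if a clean Girsanov/large-deviations argument worked you would get $\exp(Ca^2/\varepsilon^2)$, not the larger exponent.

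The paper's route is entirely different and designed precisely to sidestep the reflection. Instead of $\|X_t^\varepsilon-X_0\|_2^2$ it works with $\|X_t^\varepsilon\|_2^2-\|X_0\|_2^2$ (after centring so that $\bar X_0=0$): by the orthogonality \eqref{def RSHE 2} the reflection vanishes from the It\^o expansion of $\|X_t^\varepsilon\|_2^2$. This substitution is what produces the factor $\|X_0-\bar X_0\|_2^2$ in the upper bound, since one must relate escape in $\|X_t^\varepsilon-X_0\|_2$ to growth in $\|X_t^\varepsilon\|_2$. A second, equally essential, ingredient you are missing is a control on $\|\nabla_x X_t^\varepsilon\|_2^2$, which enters the dynamics of $\|X_t^\varepsilon\|_2^2$ with the wrong sign; the paper introduces an auxiliary exponential functional built from the high-mode filter $A=[I-e^{\gamma\varepsilon^{2\beta}\Delta_x}]^{1/2}$, and the product of the two exponentials is shown to be a sub-solution via a delicate balancing of many parameters ($\sigma,\varrho,\vartheta_1,\vartheta_2,\gamma,\eta,\delta$). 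This is where the specific choice $\alpha=\beta=2$ and the constraint $a\in[1/a_0,a_0]$ actually come from.
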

 
\begin{remark} 
The following remarks are in order: 
\vskip 4pt

{1. We prove in item 2 immediately below that $X_0$ is necessarily a (strict) minimiser of 
$V$ on ${\mathcal O}$. Basically, this follows from the 
condition 
${\mathscr D} V(X_0)=0$ together with 
\eqref{eq:monotonicity:weak}.} Unfortunately, one cannot replace the assumption ${\mathscr D} V(X_0)=0$ with the assumption that $X_0$ minimises $V$, even in the framework of \eqref{eq:monotonicity:weak}. Briefly, a minimiser might not satisfy ${\mathscr D} V(X_0)=0$ because $U^2(\mathbb S)$ has a boundary in $L^2(\mathbb S)$. Typically, a minimiser satisfies 
${\mathscr D} V(X_0)=0$ if the gradient of $X_0$ remains away from zero, which makes it possible 
to consider, in any {smooth (symmetric) direction $f: {\mathbb S} \rightarrow {\mathbb R}$, perturbations $X_0 + \delta f$ that remain in $U^2({\mathbb S})$ for $\delta$ small enough.} Furthermore, is it easy to show that the condition is satisfied for $V(X):=\lVert X-X_0 \rVert_2^2$.
\vskip 4pt

{2. In the framework of the statement}, 
 we can find a constant $C>1$
(which may not be the same as the constant $C$ appearing in the statement of the theorem) such that 
\begin{equation}
\label{eq:potential:L2norm}
\frac1{C} \| X - X_0 \|_2^2 \leq 
V\bigl( \textrm{\rm Leb}_{\mathbb S} \circ X^{-1} \bigr) - V\bigl( \textrm{\rm Leb}_{\mathbb S} \circ X_0^{-1} \bigr) \leq C \| X - X_0 \|^2_2, \quad X \in {\mathcal O}.
\end{equation} 
The above display says that the stopping time $\tau_{\varepsilon}$ 
in
\eqref{eq:stoppingtime:bounds:CV}  
can be equivalently reformulated as 
$\tau_{\varepsilon} := \inf \{ t \geq 0 : V(X_t^{\varepsilon}) - V(X_0) \geq a^2 \}.$ 
{(The two definitions of $\tau_\varepsilon$ do not coincide exactly but yield two quantities of the same order.)}
The proof
of \eqref{eq:potential:L2norm}
 is rather straightforward and relies on the expansion 
(see Definition \ref{def:derivative:mathscr DV})
\begin{equation*}
V(X) = V(X_0) +\int_0^1 \bigl\langle  {\mathscr D} V\bigl(t X + (1-t) X_0\bigr),X-X_0 \bigr\rangle_2 \d t.
\end{equation*} 
By 
\eqref{eq:monotonicity:weak}, this yields
\begin{equation*} 
\begin{split} 
V(X) \geq V(X_0) + \kappa \int_0^1 t \| X - X_0 \|_2^2 \d t \geq V(X_0) + \frac{\kappa}2 \| X - X_0 \|_2^2. 
\end{split} 
\end{equation*} 
{This proves in particular that $X_0$ is a local strict minimiser of $V$.}
Lipschitz continuity of ${\mathscr D} V$ provides the converse bound.

The two bounds in 
\eqref{eq:stoppingtime:bounds:CV}  
are thus consistent with 
the standard bounds for metastable diffusion processes in 
finite dimension, since the exponent $a^2$ therein should be regarded 
as the height of the metastable well formed by the potential around $X_0$. 
The additional constraint $a \in [1/a_0,a_0]$ however, is non-standard. 
Although it is a clear restriction on the scope of the result, it must be stressed that $a_0$ can be chosen as large as needed in the statement. From 
a practical point of view, this says that 
the only wells that are left aside in the analysis are 
 very small or very large ones, which looks acceptable. 
The additional factor $a^2 + \| X_0 - \bar{X_0} \|_2^2$ in the upper bound is also non-standard and also restricts the scope of the result 
to initial conditions whose variance is bounded by known constants. 

Despite the slightly non-standard form of the two bounds, 
the result remains (from our perspective) valuable. It should be regarded as a proof of concept highlighting the possible interest of an infinite dimensional common noise in the study of a minimization problem on the space of probability measures. 
Of course, one may think to push the analysis further. For instance,
metastability questions could be addressed by large deviations, as done in the seminal monograph by  Freidlin and Wentzell
\cite{FreidlinWentzell}. {In parallel to this question}, we plan to address numerical examples 
in order to complement this theoretical work. 
\vskip 4pt

{3. In comparison with standard metastability bounds, the two peculiarities} explained above come from the proof of the upper bound, which is challenging due to the presence of the reflecting term in the dynamics
and because of the infinite dimensional nature of the problem. In particular, 
we explain in the next items below that, when working on the space 
${\mathcal P}_2({\mathbb R})$, we cannot reasonably expect 
the wells of the potential $V$ 
to have a smooth shape. This makes a substantial difference 
with the analogue in finite dimension and, combined with the presence of a reflection, presents significant difficulties.  
\vskip 4pt

{4.} As we have just alluded to, 
the 
condition 
\eqref{eq:monotonicity:weak}
is typically satisfied if $V$ is assumed to be locally convex 
on the neighbourhood ${\mathcal O}$ of $X_0$. Whilst this may look an obvious remark, it raises several questions about the notion of local convexity
on the space ${\mathcal P}_2({\mathbb R})$.  
Here, $\kappa$-convexity on ${\mathcal O}$ may be formulated as
\begin{equation}
\label{eq:kappa:convexity:Q:sense}
V (Y) \geq V(X) +
\bigl\langle {\mathscr D} V(X),Y-X \bigr\rangle_2 +  \frac{\kappa}{2} 
\| X - Y \|_2^2, 
\end{equation} 
for any $X,Y \in {\mathcal O}$. 
A typical instance is $V(X) = \| X - X_0 \|_2^2$.

It must be stressed that the above notion of convexity is consistent with the more standard notion of convexity that would require the
map 
\begin{equation}
\label{eq:canonical:representative} 
{\mathcal P}_2({\mathbb R}) \ni \mu \mapsto V \bigl(   {F_\mu^{-1}}  \bigr) 
\end{equation} 
to be locally $\kappa$-displacement convex (see for instance 
\cite[Chapter 5]{CarmonaDelarueI}), where we recall that $F_\mu^{-1}$ is the canonical
representative of $\mu$ in the space 
  $U^2({\mathbb S})$.
Observe indeed that the   
  map 
  \eqref{eq:canonical:representative}
  is the canonical identification of $V$ when written as a function on the space ${\mathcal P}_2({\mathbb R})$ (and not 
  as a function on $U^2({\mathbb S})$). 

Even though (local) convexity may be equivalently reformulated in terms of the more standard notion of 
displacement (local) convexity, 
important differences may arise when combining convexity with more standard notions of differentiability. 
In particular, 
assuming 
the function
\eqref{eq:canonical:representative} 
to be both locally $\kappa$-convex and continuously differentiable in Lions' sense (i.e., the derivative $\partial_\mu$ does exist)
is {significantly} more demanding than assuming 
$V$ to be both locally $\kappa$-convex and continuously differentiable in ${\mathscr D}$ sense. Indeed, 
if
$V$ (as in 
\eqref{eq:canonical:representative})  is Lions differentiable, then its Lions' derivative at $X$ reads $x \mapsto \partial_\mu V( {\rm Leb}_{\mathbb S} \circ X^{-1})(X(x))$ and is thus $\sigma(X)$-measurable. As we already explained in 
Subsection \ref{subse:derivative:Q}, the latter may not be true 
for ${\mathscr D}V(X)$, unless $V$ is Lions' differentiable, in which case both derivatives coincide. This is the reason why 
the approach based on the derivative ${\mathscr D}$ is more general. To wit, 
for $X_0$ a local minimizer of $V$, define for every integer $n \in {\mathbb N}_0$
$\phi_n : y \in {\mathbb R}   \mapsto 2^{-n} \lfloor 2^n y \rfloor$
and call $X_0^n$ the conditional expectation of $X_0$ given $\phi_n(X_0)$ (the probability space being 
here understood as ${\mathbb S}$ equipped with $\textrm{\rm Leb}_{\mathbb S}$). 
Obviously, the $\sigma$-field generated by $\phi_n(X_0)$ is included in the $\sigma$-field generated by 
$\phi_{n+1}(X_0)$ and, by martingale convergence theorem, one deduces that $\| X^n_0 -  X_0\|_2 \rightarrow 0$
as $n$ tends to $\infty$. 
In particular, for $n$ large enough, both $X^n_0$ and $X_0$ are in the convex set where 
$V$ is $\kappa$-convex. Choosing $Y$ as $X_0$ and $X$ as $X^n_0$ in \eqref{eq:kappa:convexity:Q:sense}, the key is to observe that 
\begin{equation*} 
\int_{\mathbb S} \partial_\mu V \bigl( \textrm{\rm Leb}_{\mathbb S} \circ (X^n_0)^{-1} \bigr) 
\bigl( X_0^n(x) \bigr) \bigl( X_0(x) - X_0^n(x) \bigr) \d x =0,
\end{equation*}  
because $X_0^n$ is the conditional expectation of $X_0$ given $\phi_n(X_0)$. As a result of 
local $\kappa$-convexity, $X_0$ (which is assumed to be a local minimizer of $V$) is necessarily equal to $X_0^n$ and is thus constant 
on every event of the form $\{ k/2^n \leq X_0 < (k+1)/2^n\}$, for 
$k \in {\mathbb N}$.
In fact, the same holds if one replaces
$\phi_n$ by $\phi_n : {\mathbb R} \ni y \mapsto 
2^{-n} \lfloor 2^n y+1/2 \rfloor$, from which we deduce that,  {for $n$ large enough again}, $X_0$ is constant on every
event of the form $\{ k/2^n + 1/2^{n+1}\leq X_0 < (k+1)/2^n+1/2^{n+1}\}$, for 
$k \in {\mathbb N}$. The combination of the latter two observations 
says that the value of $X_0$ on 
$[k/2^n, (k+1)/2^n)$ is the same as the value on 
$[k/2^n+1/2^{n+1},
(k+1)/2^n+1/2^{n+1})$ (because the two intervals overlap)
and is also the same as the value on 
$[(k+1)/2^n, (k+2)/2^n)$, as the result of which $X_0$ is necessarily constant!
This is a variant of Jensen's inequality, which puts 
a strong form of rigidity on locally convex smooth functions. 
 
When $V$ is not required to be Lions differentiable, an obvious example 
of a convex potential with a non-trivial minimizer is  $V(X) = \| X - X_0 \|_2^2$, for a non-trivial $X_0$.  
\vskip 4pt

5. 
Rigidity of smooth locally convex functions on ${\mathcal P}({\mathbb R})$ forces us to 
focus on weaker functions in the statement of Theorem 
\ref{thm:meta}, as otherwise the class of examples would be rather limited. 
Whilst this explains our choice to assume the potential $V$ to be merely 
continuously differentiable in ${\mathscr D}$ sense, this makes the proof more difficult. Indeed, a natural argument 
to show Theorem \ref{thm:meta} would consist in expanding  $(V(X_t^{\varepsilon}))_{t \geq 0}$
(by means of an It\^o's formula {that we intend to give in a forthcoming contribution}), but this would
require $V$ to be smooth enough (in Lions' sense) and thus would contradict our assumption. Instead, 
our strategy is to expand $(\| X_t^{\varepsilon} - X_0 \|_2^2)_{t \geq 0}$, but, by doing so, we pay for the non-smoothness 
(in Lions' sense) of the function $U^2({\mathbb S}) \ni X \mapsto \| X- X_0 \|_2^2$ whose derivative at $X$ {may} not have a mean-field structure, i.e. {may not be} $\sigma(X)$-measurable. 
As a result of this, 
the reflecting term is still present when expanding $(\| X_t^{\varepsilon} - X_0 \|_2^2)_{t \geq 0}$, which is a serious difficulty. This explains why the bound 
\eqref{eq:stoppingtime:bounds:CV}
is less accurate than the usual ones in finite dimension.  
\vskip 4pt

6. Whenever $V$ is locally $\kappa$-convex and 
smooth in Lions' sense, we know 
from item 3 that the local minimizer $X_0$ is a constant. 
As before, $V(X)$ behaves locally like 
$\| X-X_0 \|_2^2$, 
but differently 
from the case when $X_0$ has a non-zero variance, 
there is no difficulty for expanding 
$(\| X_t^{\varepsilon} - X_0\|_2^2)_{t \geq 0}$ in this situation. 
As a result, our method of proof allows us to recover the 
usual lower and upper bounds for the 
mean exit time ${\mathbb E} (\tau_\varepsilon)$, see 
Remark 
\ref{rem:4.5}. 
This shows that the non-standard form of the main estimate in 
the statement of Theorem 
\ref{thm:meta}
is really due to our willingness 
to allow for non-trivial minimizers, which is not an easy task due to the rigidity of smooth locally convex functions on ${\mathcal P}_2({\mathbb R})$. 
\end{remark}

The rest of this subsection is dedicated to the proof of Theorem 
\ref{thm:meta}.

\subsubsection{Proof of the Metastability Estimate: Lower Bound}
 
The lower bound in Theorem 
\ref{thm:meta} is a consequence of the following proposition
{(with the same notations as in 
\eqref{eq:notations:visco:1},
\eqref{eq:notations:visco:2}
and
\eqref{eq:monotonicity:weak})}: 
\begin{proposition}
\label{eq:lower:bound}
There exists a constant $c$, only depending on $\lambda$, such that, 
for a parameter $\sigma>0$ and 
under the condition
\begin{equation}
\label{eq:condition:lower:bound}
 a^2 \frac{\kappa -  \sigma \max(\vartheta_1 ,  \vartheta_2 , \varepsilon^{2  {\lambda} \beta})}{\varepsilon^2} 
\geq \frac{(\vartheta_1 + \psi \vartheta_2 +  c \varepsilon^{ {2 \lambda \beta}})}{\varepsilon^{\beta}}
+ \frac{1}{\varepsilon^{2-2\alpha}} \| \nabla X_0 \|_2^2, 
\end{equation} 
it holds that
\begin{equation} 
\label{eq:lower:bound:result:bound:bound}
\begin{split}
\exp \bigl( \sigma \frac{a^2}{2 \varepsilon^2} \bigr) 
&\leq 
\Bigl( \frac{\sigma}{\varepsilon^{\beta}}  \bigl( \vartheta_1 + \psi \vartheta_2 +   c \varepsilon^{ {2\lambda  \beta}} \bigr)   
 + 
\frac{\sigma}{\varepsilon^{2-2 \alpha}}
\|   \nabla X_0 \|^2_2 
\Bigr) 
  {\mathbb E} ( {\tau_{\varepsilon}} ).
  \end{split}
  \end{equation}  
\end{proposition}

\begin{proof} 
In what follows, we use the process 
\begin{equation*}
{\mathcal E}_t^{\varepsilon} :=
 \exp \Bigl(  {\sigma} \frac{ \| X_t^{\varepsilon} - X_0^\varepsilon \|_2^2}{\varepsilon^2} \Bigr),
 \quad t \geq 0,
\end{equation*}
for the same parameter $\sigma >0$ as in the statement. By following the It\^o expansion performed in the proof of \cite[Proposition 4.11 and Corollary 4.12]{delarueHammersley2022rshe} ({we apply the standard It\^o formula to the square of each Fourier mode of a mollification of the solution process, sum over the Fourier modes and then remove the mollification}), we have 
\begin{equation}
\label{eq:expansion:etvarepsilon:0} 
\begin{split} 
\d_t \Bigl( \| X_t^{\varepsilon}  -  X_0 \|_2^2\Bigr) 
&\leq  - 2 \bigl\langle {\mathscr D} V\bigl(X_t^{\varepsilon}\bigr), X_t^{\varepsilon} - X_0  \bigr\rangle_2 \d t
- 
2 \varepsilon^{2 \alpha} \langle \nabla_x X_t^{\varepsilon}, \nabla_x (X_t^{\varepsilon} - X_0)  \bigr\rangle_2 \d t
\\
&\hspace{-15pt} 
+ \varepsilon^2 \Bigl( \vartheta_1 \varepsilon^{-\beta} + 
\vartheta_2
{\psi} \varepsilon^{-\beta}  
+
\sum_{k >  {\psi} \varepsilon^{-\beta}} \frac1{k^{2\lambda}}
\Bigr) \d t
+ 2 \varepsilon \bigl\langle X_t^{\varepsilon} - X_0 , \d W_t^\varepsilon \bigr\rangle_2,
\end{split}
\end{equation} 
which follows intuitively from the fact that 
$\langle X_t^{\varepsilon} - X_0, \d \eta_t^{\varepsilon} \rangle$ has a negative contribution. 
Therefore,
\begin{equation} 
\label{eq:expansion:etvarepsilon0}
\begin{split}
\d_t {\mathcal E}_t^{\varepsilon} &{\leq} - \frac{2\sigma}{\varepsilon^2} 
{\mathcal E}_t^{\varepsilon} 
 \bigl\langle {\mathscr D} V\bigl(X_t^{\varepsilon}\bigr), X_t^{\varepsilon} - X_0 \bigr\rangle_2 \d t
- 
\frac{2\sigma}{\varepsilon^{2-2 \alpha}}
{\mathcal E}_t^{\varepsilon}   \langle \nabla_x X_t^{\varepsilon} , \nabla_x (X_t^{\varepsilon} - X_0) \rangle_2 \d t
\\
&\hspace{5pt}
+
\sigma {\mathcal E}_t^{\varepsilon} 
\Bigl( \vartheta_1 \varepsilon^{-\beta} + 
\vartheta_2
{\psi} \varepsilon^{-\beta}  
+
\sum_{k >  {\psi} \varepsilon^{-\beta}} \frac1{k^{2\lambda}}
\Bigr)
 \d t  +
\frac{2\sigma}{\varepsilon} 
{\mathcal E}_t^{\varepsilon} 
  \bigl\langle X_t^{\varepsilon}- X_0 , \d W_t^\varepsilon \bigr\rangle_2
\\
&\hspace{5pt} +
\frac{2\sigma^2}{\varepsilon^2} 
{\mathcal E}_t^{\varepsilon} 
\Bigl( \vartheta_1 \sum_{k \leq    \varepsilon^{-\beta}}
\bigl\vert \widehat{X_t^{\varepsilon}}^k
- \widehat{X_0}^k
 \bigr\vert^2 
+ 
 { \vartheta_2 
\sum_{ \varepsilon^{-\beta} < k \leq \psi \varepsilon^{-\beta}}
\bigl\vert \widehat{X_t^{\varepsilon}}^k
- \widehat{X_0}^k
 \bigr\vert^2 }
 \\
&\hspace{30pt} +
 \sum_{k > {\psi} \varepsilon^{-\beta}}
\frac1{k^{2 \lambda}}
\bigl\vert \widehat{X_t^{\varepsilon}}^k - \widehat{X_0}^k \bigr\vert^2 
\Bigr) \d t.
\end{split}
\end{equation} 
Now, using the property 
\eqref{eq:monotonicity:weak} {and Young's inequality}, there exists a constant $c$, depending on $\lambda$ such that ({for $t < \tau_\varepsilon$})
\begin{equation} 
\label{eq:expansion:etvarepsilon}
\begin{split}
\d_t {\mathcal E}_t^{\varepsilon} &\leq - \frac{2\sigma \kappa}{\varepsilon^2} 
{\mathcal E}_t^{\varepsilon} 
\| X_t^{\varepsilon} - X_0 \|_2^2 \d t
- 
\frac{\sigma}{\varepsilon^{2-2 \alpha}}
{\mathcal E}_t^{\varepsilon}   
\|   \nabla_x X_t^{\varepsilon} \|^2_2 \d t + 
\frac{\sigma}{\varepsilon^{2-2 \alpha}}
{\mathcal E}_t^{\varepsilon}   
\|   \nabla_x X_0 \|^2_2  \d t
\\
&\hspace{5pt}
+
\frac{\sigma}{\varepsilon^{\beta}} {\mathcal E}_t^{\varepsilon} 
 \bigl( \vartheta_1 + \psi \vartheta_2 +  c \varepsilon^{  {2 \lambda} \beta} \bigr) 
 \d t
 +
\frac{2\sigma}{\varepsilon} 
{\mathcal E}_t^{\varepsilon} 
  \bigl\langle X_t^{\varepsilon}- X_0 , \d W_t^\varepsilon \bigr\rangle_2
\\
&\hspace{5pt} +
\frac{2\sigma^2}{\varepsilon^2} 
{\mathcal E}_t^{\varepsilon} 
\max  \bigl( \vartheta_1 , \vartheta_2 , \varepsilon^{ {2 \lambda \beta}} \bigr) 
\| X_t^{\varepsilon} - X_0 \|_2^2 \d t.
\end{split}
\end{equation} 
Recalling that 
\begin{equation*}
\tau_{\varepsilon} := \inf \bigl\{ t \geq 0 : \| X_t^{\varepsilon}  - X_0^\varepsilon \|_2 \geq a 
\bigr\},
\end{equation*}
and assuming that {${\mathbb E}(\tau_\varepsilon) < \infty$} ({if not true, the bound
\eqref{eq:lower:bound:result:bound:bound} in the statement is obvious}), 
we obtain 
(integrating 
\eqref{eq:expansion:etvarepsilon}
from $0$ to $T \wedge \tau_\varepsilon$ and then letting $T$ tend to $\infty$)
\begin{equation*}
\begin{split}
&\exp \bigl( \sigma \frac{a^2}{\varepsilon^2} \bigr) 
+ 2 \sigma \frac{\kappa-  \sigma 
\max (\vartheta_1 , \vartheta_2 , \varepsilon^{2  {\lambda} \beta} )}{ \varepsilon^{2}} {\mathbb E} \int_0^{\tau_{\varepsilon}} 
{\mathcal E}_t^{\varepsilon} 
\| X_t^{\varepsilon} - X_0\|_2^2 
  \d t
\\
&\leq \frac{\sigma}{\varepsilon^{\beta}}  \bigl( \vartheta_1 + \psi \vartheta_2 +  c \varepsilon^{ {2\lambda  \beta}} \bigr)   {\mathbb E} \int_0^{\tau_{\varepsilon}} {\mathcal E}_t^{\varepsilon} 
 \d t
 + 
\frac{\sigma}{\varepsilon^{2-2 \alpha}}
\|   \nabla_{{x}} X_0 \|^2_2 
 {\mathbb E} \int_0^{\tau_{\varepsilon}} 
{\mathcal E}_t^{\varepsilon}   
 \d t.
\end{split}
\end{equation*} 
And then,
\begin{equation*}
\begin{split}
&\exp \bigl( \sigma \frac{a^2}{\varepsilon^2} \bigr) 
+  \sigma a^2 \frac{\kappa-   \sigma 
\max(\vartheta_1 , \vartheta_2 , \varepsilon^{ {2 \lambda \beta}} )}{ \varepsilon^{2}} {\mathbb E} \int_0^{\tau_{\varepsilon}} 
{\mathcal E}_t^{\varepsilon} 
{\mathbf 1}_{\{ 
\| X_t^{\varepsilon} - X_0\|_2 \geq a/\sqrt{2} \}}
  \d t
\\
&\leq \frac{\sigma}{\varepsilon^{\beta}}  \bigl( \vartheta_1 + \psi \vartheta_2 +  c \varepsilon^{2\lambda  \beta} \bigr)   {\mathbb E} \int_0^{\tau_{\varepsilon}} {\mathcal E}_t^{\varepsilon} 
 \d t
 + 
\frac{\sigma}{\varepsilon^{2-2 \alpha}}
\|   \nabla_{ {x}} X_0 \|^2_2 
 {\mathbb E} \int_0^{\tau_{\varepsilon}} 
{\mathcal E}_t^{\varepsilon}   
 \d t.
\end{split}
\end{equation*} 
Under the condition
\eqref{eq:condition:lower:bound}, we get 
\begin{equation*}
\begin{split}
&\exp \bigl( \sigma \frac{a^2}{\varepsilon^2} \bigr)  
\leq 
\Bigl( \frac{\sigma}{\varepsilon^{\beta}} \bigl( \vartheta_1 + \psi \vartheta_2  + c \varepsilon^{2\lambda  \beta} \bigr)   
 + 
\frac{\sigma}{\varepsilon^{2-2 \alpha}}
\|   \nabla_{ {x}} X_0 \|^2_2 
\Bigr) 
 {\mathbb E} \int_0^{\tau_{\varepsilon}} 
{\mathcal E}_t^{\varepsilon}   
{\mathbf 1}_{\{ 
\| X_t^{\varepsilon} - X_0\|_2 < a/\sqrt{2} \}}
 \d t,
\end{split}
\end{equation*} 
which allows us to obtain 
\begin{equation*} 
\begin{split} 
\exp \bigl( \sigma \frac{a^2}{\varepsilon^2} \bigr) 
&\leq 
\Bigl( \frac{\sigma}{\varepsilon^{\beta}} \bigl( \vartheta_1 + \psi \vartheta_2 +  c \varepsilon^{2\lambda  \beta} \bigr)   
 + 
\frac{\sigma}{\varepsilon^{2-2 \alpha}}
\|   \nabla_{ {x}} X_0 \|^2_2 
\Bigr) \exp \bigl( \sigma \frac{a^2}{2\varepsilon^2} \bigr) 
\\
&\hspace{15pt} \times {\mathbb E} \int_0^{\tau_{\varepsilon}}    
{\mathbf 1}_{\{ 
\| X_t^{\varepsilon} - X_0\|_2 < a/\sqrt{2} \}}
 \d t
 \\
&\leq 
\Bigl( \frac{\sigma}{\varepsilon^{\beta}} \bigl( \vartheta_1 + \psi \vartheta_2 + c \varepsilon^{2\lambda  \beta} \bigr)   
 + 
\frac{\sigma}{\varepsilon^{2-2 \alpha}}
\|   \nabla_{ {x}} X_0 \|^2_2 
\Bigr) \exp \bigl( \sigma \frac{a^2}{2\varepsilon^2} \bigr) 
  {\mathbb E} ( {\tau_{\varepsilon}} ), 
 \end{split}  
 \end{equation*}
which completes the proof of the lower bound. 
\end{proof} 

\subsubsection{Proof of the Metastability Estimate: Upper Bound}
 
\begin{proposition}
\label{eq:upper:bound}
{Set $\alpha=\beta=2$ in 
\eqref{eq:notations:visco:2}}
and 
fix $a_0>{1}$. Then, there exist positive reals $\theta$, $\chi$ and $\varpi$ independent of $a_0$
and satisfying 
$\theta >  {3} \varpi$, such that, for
\begin{equation*} 
\vartheta_1 = \vartheta, \quad \vartheta_2 = \vartheta^{\theta}, \quad \gamma = \vartheta^{\chi}, 
\quad \psi = \vartheta^{-\varpi},
\end{equation*} 
and for $\vartheta$ small enough (in terms of the parameter $a_0$ and the properties of $V$), 
for $C_\vartheta$ large enough (in terms of the parameters $a_0$ and $\vartheta$ and of the 
properties of $V$)
and then for $\varepsilon$ small enough, it holds {for $a \leq a_0$}:
\begin{equation*} 
 \frac{1}{C_\vartheta \varepsilon^2} 
 {\mathbb E} ( \tau_{\varepsilon})
\leq
{\mathbb E} \Bigl[ \exp \bigl( \frac{C_\vartheta {\max(1,a^2)}}{\varepsilon^2}
\bigl[ a^2+ \| X_0 - {\bar{X_0}} \|_2^2 \bigr] \bigr) 
 \Bigr]. 
\end{equation*}
\end{proposition}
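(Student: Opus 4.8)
The plan is to reduce the estimate to a \emph{uniform one‑step escape bound} and then iterate. Write $\mathcal{O}:=U^2(\mathbb{S})\cap B_{L^2}(X_0,a)$. If one can show there is $p_\varepsilon>0$ with $\inf_{X\in\mathcal{O}}\mathbb{P}_X(\tau_\varepsilon\le1)\ge p_\varepsilon$, then, using that weak uniqueness (Theorem~\ref{thm eu w drift}) makes the $X$‑component a Markov family and that $\{\tau_\varepsilon>n\}\subset\{X_n^\varepsilon\in\mathcal{O}\}$, the Markov property at integer times gives $\mathbb{P}_{X_0}(\tau_\varepsilon>n)\le(1-p_\varepsilon)^n$, hence $\mathbb{E}(\tau_\varepsilon)\le1/p_\varepsilon$ (and in particular $\mathbb{P}(\tau_\varepsilon<\infty)=1$, the fact assumed in the proof of Proposition~\ref{eq:lower:bound}). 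Everything then reduces to producing $p_\varepsilon\ge c\exp(-Ca^2/(\varepsilon^2\vartheta_1))$.

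The key structural observation is that the spatial mean $\bar X_t^\varepsilon:=\int_{\mathbb{S}}X_t^\varepsilon(x)\,\d x$ (the zeroth Fourier mode) is \emph{blind to the reflection}. Indeed, the constant functions $\mathbf{1}$ and $-\mathbf{1}$ are each equal to their own symmetric non‑increasing rearrangement and both lie in $H^2_{\rm sym}(\mathbb{S})$, so by item~2 of Definition~\ref{def:existence} the path $t\mapsto\langle\eta_t^\varepsilon,\mathbf{1}\rangle$ is simultaneously non‑decreasing and non‑increasing, hence $\equiv0$; since also $\langle\Delta_x X_t^\varepsilon,\mathbf{1}\rangle=0$, testing \eqref{eq:SGD:small:visco} against $\mathbf{1}$ shows that $(\bar X_t^\varepsilon)_{t\ge0}$ is a genuine one‑dimensional It\^o process $\d\bar X_t^\varepsilon=-\overline{{\mathscr D}V(X_t^\varepsilon)}\,\d t+\varepsilon\sigma_0\,\d\beta_t^0$, with $\sigma_0$ of order $\sqrt{\vartheta_1}$ and drift controlled by $|\overline{{\mathscr D}V(X_t^\varepsilon)}|\le\|{\mathscr D}V(X_t^\varepsilon)\|_2\le L\,a$ whenever $X_t^\varepsilon\in\mathcal{O}$ (Lipschitz continuity of ${\mathscr D}V$ on $\mathcal{O}$ together with ${\mathscr D}V(X_0)=0$). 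Since $\|X_t^\varepsilon-X_0\|_2\ge|\bar X_t^\varepsilon-\bar{X_0}|$, it suffices to force this one‑dimensional variable to move a distance $a$ away from $\bar{X_0}$.

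I would do this by a Girsanov tilt acting on $\beta^0$ only. On a probability space carrying the solution started from $X\in\mathcal{O}$, set $\d\mathbb{Q}/\d\mathbb{P}:=\cE_1(H\beta^0)$ with $H$ a deterministic constant of modulus $\asymp a/(\varepsilon\sqrt{\vartheta_1})$ and sign pushing $\bar X_t^\varepsilon$ toward the nearer endpoint $\bar{X_0}\pm a$ (a measurable choice in $X$); as $H$ is constant the exponential is a true martingale (Novikov trivial), under $\mathbb{Q}$ the pair $(X^\varepsilon,\eta^\varepsilon)$ solves the drifted rearranged SHE with the extra constant drift $\varepsilon\sigma_0H\mathbf{1}$, and on $\{t<\tau_\varepsilon\}$ the zeroth mode has net drift bounded below by a positive multiple of $a$; a crude bound on $\sup_{[0,1]}|\beta^0|$ then yields $\mathbb{Q}_X(\tau_\varepsilon\le1)\ge\tfrac12$ for $\varepsilon$ small (in terms of $\vartheta_1,a$). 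Transferring back by Cauchy--Schwarz, $\mathbb{P}_X(\tau_\varepsilon\le1)\ge\mathbb{Q}_X(\tau_\varepsilon\le1)^2/\mathbb{E}_{\mathbb{P}}[(\d\mathbb{Q}/\d\mathbb{P})^2]$, and $\mathbb{E}_{\mathbb{P}}[\cE_1(H\beta^0)^2]=\exp(H^2)$, so $p_\varepsilon\ge\tfrac14\exp(-Ca^2/(\varepsilon^2\vartheta_1))$ with $C$ depending only on the Lipschitz constant of ${\mathscr D}V$ on $\mathcal{O}$. The geometric bound then gives $\mathbb{E}(\tau_\varepsilon)\le4\exp(Ca^2/(\varepsilon^2\vartheta_1))$; since $\max(1,a^2)\ge1$ and $a^2+\|X_0-\bar{X_0}\|_2^2\ge a^2$, this exponent is at most $C_\vartheta\max(1,a^2)(a^2+\|X_0-\bar{X_0}\|_2^2)/\varepsilon^2$ for $C_\vartheta$ large in terms of $\vartheta$ and the data of $V$, and absorbing the constant $4$ (and the harmless factor $C_\vartheta\varepsilon^2$) for $\varepsilon$ small yields the claim. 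The exponents $\theta,\chi,\varpi$ intervene only to guarantee that the same colouring $(\vartheta_1,\vartheta_2,\eta)=(\vartheta,\vartheta^\theta,\vartheta^{-\varpi})$ is admissible in Proposition~\ref{eq:lower:bound} as well (which forces $\vartheta_1$ and $\eta\vartheta_2$ to be small relative to $\kappa a^2$), so one simply fixes any valid triple, e.g.\ $\theta=3$, $\varpi=1$, $\chi=2$.

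The main obstacle — and the source of the coarser‑than‑usual exponent — is the reflection. A direct It\^o expansion of $(\|X_t^\varepsilon-X_0\|_2^2)$ does not close for the \emph{upper} bound, because the reflection contribution $\langle X_t^\varepsilon-X_0,\d\eta_t^\varepsilon\rangle$ is non‑positive (a Skorokhod‑type inequality, $X_0\in U^2(\mathbb{S})$ being a fixed point of the convex cone) and thus opposes escape; routing the escape through the reflection‑free zeroth mode circumvents this, at the price that on $\mathcal{O}$ the nonzero modes are only known to sit in a ball of squared radius $\lesssim a^2+\|X_0-\bar{X_0}\|_2^2$, whence that factor. Some care is also needed around the interplay between the merely locally Lipschitz drift and $\tau_\varepsilon$: the bound $|\overline{{\mathscr D}V(X_t^\varepsilon)}|\le La$ is invoked only on $\{t<\tau_\varepsilon\}$, and ``the exit happens before time $1$'' is obtained by contradiction on $\{\tau_\varepsilon>1\}$, an event on which the whole trajectory remains in $\mathcal{O}$.
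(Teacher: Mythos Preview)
Your approach is correct and differs substantially from the paper's. The paper constructs two exponential functionals, $\mathcal{D}_t^\varepsilon=\exp\bigl(\sigma(\|X_t^\varepsilon\|_2^2-\|X_0\|_2^2)/\varepsilon^2\bigr)$ and $\mathcal{A}_t^\varepsilon=\exp\bigl(-\varrho\|AX_t^\varepsilon\|_2^2/\varepsilon^2\bigr)$ with $A=[I-e^{\gamma\varepsilon^{2\beta}\Delta}]^{1/2}$, and expands their product by It\^o's formula. The first functional is chosen so that the reflection disappears (since $\langle X_t^\varepsilon,\d\eta_t^\varepsilon\rangle$ vanishes in the limit, by item~4 of Definition~\ref{def:existence}), while the second controls the high Fourier modes needed to compensate the wrong-signed Laplacian contribution $-2\varepsilon^{2\alpha}\|\nabla_x X_t^\varepsilon\|_2^2$. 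This leads to an intricate balance of parameters ($\sigma,\varrho,\gamma,\eta,\delta$ in addition to $\theta,\chi,\varpi$) and a long list of compatibility conditions. Your observation that the zeroth mode $\bar X_t^\varepsilon$ is blind to \emph{both} the reflection and the Laplacian bypasses all of this: escape is routed through a one-dimensional It\^o process, and a single Girsanov tilt on $\beta^0$ combined with Markov iteration finishes the argument. The verification that $\langle\eta_t^\varepsilon,\mathbf{1}\rangle\equiv0$ (because both $\mathbf{1}$ and $-\mathbf{1}$ are symmetric non-increasing) is the key structural input, and it is correct.

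Your intrinsic bound $\mathbb{E}(\tau_\varepsilon)\le 4\exp\bigl(Ca^2/(\varepsilon^2\vartheta_1)\bigr)$ is in fact \emph{sharper} than the stated proposition: it carries neither the factor $\max(1,a^2)$ nor the additive $\|X_0-\bar{X_0}\|_2^2$, and thus matches the standard finite-dimensional metastability rate that the paper only recovers in the special case $X_0$ constant (Remark~\ref{rem:4.5}). One small caveat in your final packaging step: absorbing the prefactor $4$ into $C_\vartheta\varepsilon^2\exp(\ldots)$ requires the surplus in the exponent, of order $(C_\vartheta-C/\vartheta)a^2/\varepsilon^2$, to dominate $\log(1/\varepsilon^2)$, which implicitly uses $a$ bounded away from zero. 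That lower bound is present in Theorem~\ref{thm:meta} (via $a\ge 1/a_0$) though not explicit in the proposition's hypotheses, so your deduction is valid in the only regime where the result is applied. The paper's heavier route stays closer to the full $L^2$ geometry and may transfer more readily to settings without a privileged reflection-free direction (e.g.\ higher-dimensional analogues mentioned in the prospects), which is presumably why it was chosen.
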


\begin{proof}
The proof of the upper bound is much more difficult. There are two reasons for this: 
\begin{enumerate}[(i)]
\item
 The first one is that 
we cannot use the same functional $({\mathcal E}_t^{\varepsilon})_{t \geq 0}$ as in the proof 
of the lower bound. This is due to the fact that, in the expansion 
\eqref{eq:expansion:etvarepsilon:0}, the contribution 
$\langle X_t^{\varepsilon} - X_0, \d \eta_t^{\varepsilon} \rangle$ of the reflected process has 
a negative sign, which is the right sign to obtain a lower bound. However, this becomes the wrong sign 
to obtain an upper bound. This forces us to use another functional below, but at the price of 
weakening the resulting upper bound. 
\item The second issue is that the contribution of 
the Laplacian (in the dynamics \eqref{eq:SGD:small:visco})
also has the wrong sign in the derivation of the upper bound. This forces us to  get relevant bounds for 
 $\| \nabla X_t^{\varepsilon} \|_2^2$, but this is not an easy task. 
\end{enumerate} 

Another observation, is that we can assume without any loss of generality 
that 
${\bar X_0}=0$. This just amounts to 
replace $(X_t^{\varepsilon})$ by $(X_t^{\varepsilon} 
- {\bar X_0})_{t \geq 0}$
and thus to 
use the shifted potential 
$u \mapsto V({\bar X_0} + u)$. 
\vskip 4pt
 
\textit{First Step.} 
In place of $({\mathcal E}_t^{\varepsilon})_{t \geq 0}$, as used in the analysis of the lower bound, we now use 
\begin{equation*} 
{\mathcal D}_t^{\varepsilon} := 
\exp \Bigl( \frac{\sigma}{\varepsilon^2} \bigl[ \| X_t^{\varepsilon} \|_2^2 - \|X_0 \|_2^2 \bigr] \Bigr). 
\end{equation*} 
The very good point is that the expansion for the argument in the exponential is straightforward
(compare for instance with 
\eqref{eq:expansion:etvarepsilon:0})
\begin{equation*} 
\begin{split}
\d_t \bigl[ \| X_t^{\varepsilon} \|_2^2 - \|X_0 \|^2_2 \bigr] 
&{\geq} - 2 \bigl\langle {\mathscr D} V\bigl(X_t^{\varepsilon}\bigr), X_t^{\varepsilon}  \bigr\rangle_2 \d t
- 
2 \varepsilon^{2 \alpha} \bigl\| 
\nabla_x X_t^{\varepsilon}  \bigr\|_2^2 \d t
\\
&\hspace{-15pt} 
+ \varepsilon^2 \Bigl( \vartheta_1 \varepsilon^{-\beta} + 
\vartheta_2
\lfloor (\psi-1) \varepsilon^{-\beta} \rfloor 
+
\sum_{k >  {\psi} \varepsilon^{-\beta}} \frac1{k^{2\lambda}}
\Bigr) \d t
+ 2 \varepsilon \bigl\langle X_t^{\varepsilon} , \d W_t^\varepsilon \bigr\rangle_2,
\end{split}
\end{equation*} 
And then (similar to 
\eqref{eq:expansion:etvarepsilon}), {by using the local 
Lipschitzianity of {${\mathscr D} V$}, we get for $t < \tau_\varepsilon$,}
\begin{equation}
\label{eq:expansion:dtvarepsilon} 
\begin{split} 
\d_t {\mathcal D}_t^{\varepsilon} &\geq  
- C  \frac{\sigma}{\varepsilon^2} 
{\mathcal D}_t^{\varepsilon}
\| X_t^{\varepsilon} \|_2 
\| X_t^{\varepsilon} - X_0 \|_2 
\d t
- 
2 
\frac{\sigma}{\varepsilon^{2-2 \alpha}} 
{\mathcal D}_t^{\varepsilon}
  \bigl\| 
\nabla_x X_t^{\varepsilon}  \bigr\|_2^2 \d t
\\
&\hspace{+15pt} 
+ \sigma {\mathcal D}_t^{\varepsilon} \Bigl( \vartheta_1 \varepsilon^{-\beta} + 
\vartheta_2
\lfloor (\psi-1) \varepsilon^{-\beta} \rfloor 
+
\sum_{k >  {\psi} \varepsilon^{-\beta}} \frac1{k^{2\lambda}}
\Bigr) \d t + 2 \frac{\sigma}{\varepsilon} {\mathcal D}_t^{\varepsilon} \bigl\langle X_t^{\varepsilon} , \d W_t^\varepsilon \bigr\rangle_2
\\
&\hspace{15pt} + \frac{2\sigma^2}{\varepsilon^2} 
{\mathcal D}_t^{\varepsilon} 
\Bigl( \vartheta_1 \sum_{k \leq    \varepsilon^{-\beta}}
\bigl\vert \widehat{X_t^{\varepsilon}}^k
 \bigr\vert^2 
+ 
 { \vartheta_2 
\sum_{ \varepsilon^{-\beta} < k \leq \psi \varepsilon^{-\beta}}
\bigl\vert \widehat{X_t^{\varepsilon}}^k
 \bigr\vert^2 } 
  +
 \sum_{k > {\psi} \varepsilon^{-\beta}}
\frac1{k^{2 \lambda}}
\bigl\vert \widehat{X_t^{\varepsilon}}^k  \bigr\vert^2 
\Bigr) \d t,
\end{split} 
\end{equation} 
{for a constant $C$ depending on the local Lipschitz property of ${\mathscr D} V$.}
\vskip 4pt 

\textit{Second Step.} In order to circumvent the second of the two difficulties reported in the presentation of the proof, we introduce an auxiliary functional that measures the size of the large Fourier modes of $(X_t^\varepsilon)_{t \geq 0}$. For $\gamma \in (0,1)$, 
we thus let 
$A$ be the symmetric operator
\begin{equation*} 
A := \Bigl[ I - \exp \bigl(   \tfrac{1}{4\pi^2} \gamma  \varepsilon^{2 \beta}  \Delta_x \bigr) \Bigr]^{1/2},
\end{equation*} 
and we expand 
$(\| A X_t^{\varepsilon} \|_2^2)_{t \geq 0}$ by a convenient version of It\^o's formula. However, this requires a special attention to the reflection term. 
In order to make the proof rigorous, the first step is to replace $A$ by 
\begin{equation*} 
A_{\delta} : =  \Bigl[ \exp \bigl( {\tfrac1{4 \pi^2}} \delta \Delta_x \bigr) - \exp \bigl( {\tfrac1{4 \pi^2}}  ( \gamma  \varepsilon^{2 \beta} +\delta) \Delta_x \bigr) \Bigr]^{1/2},
\end{equation*} 
{for a small parameter $\delta >0$}. 
Following the proof of \cite[(4.30)--(4.31)]{delarueHammersley2022rshe}, we get, with probability 1, for all $t \geq 0$, 
\begin{equation} 
\label{eq:Ito:A}
\begin{split}
\d_t  \bigl\| A_{\delta} X_t^{\varepsilon} \bigr\|_2^2 
&= - 2 \varepsilon^{2 \alpha} \bigl\langle A_\delta \nabla_x X_t^{\varepsilon}, 
A_\delta \nabla_x X_t^{\varepsilon} \bigr\rangle_2 \d t 
+ 2 \varepsilon \bigl\langle A_\delta^2 X_t^{\varepsilon}, \d W_t^{\varepsilon} \bigr\rangle_2 
\\
&\hspace{15pt} - 2 \bigl\langle A_\delta^2 X_t^{\varepsilon}, {\mathscr D} V(X_t^{\varepsilon}) \bigr\rangle_2 \d t
+ \varepsilon^2 \d  \bigl\llangle A_\delta W_t^{\varepsilon} \bigr\rrangle_t
 + 2 \bigl\langle A_\delta^2 X_t^{\varepsilon}, \d \eta_t^{\varepsilon} \bigr\rangle_2, 
 \\
 &=:  \d A_{\delta,t}^1  +
  \d A_{\delta,t}^2 + 
   \d A_{\delta,t}^3 +
    \d A_{\delta,t}^4 + 
     \d A_{\delta,t}^5,      
\end{split}
\end{equation} 
where, by definition {(recalling \eqref{eq:notations:visco:2})},  
$$
\varepsilon^2 \frac{\d A_{\delta,t}^4 }{\d t} = 
\varepsilon^2 \frac{\d}{\d t} \bigl\llangle A_\delta   \d W_t^{\varepsilon} \bigr\rrangle_t =
\varepsilon^2 \sum_{k \geq 0} \bigl( \lambda_k^{\varepsilon} \bigr)^2 \exp\bigl[ - \delta k^2\bigr] \bigl( 1 - \exp\bigl[ -  \gamma \varepsilon^{2 \beta} k^2\bigr] \bigr).$$
We now make several observations. 
The first one is that 
\begin{equation*}
\begin{split}
\d A_{\delta,t}^5 = {2} \bigl\langle A_\delta^2 X_t^{\varepsilon}, \d \eta_t^{\varepsilon} \bigr\rangle_2 
&={2}
\Bigl\langle \exp\bigl(  {\tfrac1{4 \pi^2}} 
\delta \Delta_x \bigr) X_t^{\varepsilon}, \d \eta_t^{\varepsilon} \Bigr\rangle_2 
-
{2}\Bigl\langle \exp\bigl( {\tfrac1{4 \pi^2}}   (\delta+  \gamma \varepsilon^{2 \beta}  ) \Delta_x\bigr) X_t^{\varepsilon}, \d \eta_t^{\varepsilon} \Bigr\rangle_2 
\\
&\leq 
{2} \Bigl\langle \exp\bigl(  {\tfrac1{4 \pi^2}}  \delta \Delta_x\bigr) X_t^{\varepsilon}, \d \eta_t^{\varepsilon} \Bigr\rangle_2,
\end{split}
\end{equation*}
with the latter following from  
\cite[Lemma 2.9]{delarueHammersley2022rshe}.
As $\delta$ tends to $0$, the right-hand side converges to $0$ in the following 
sense:
\begin{equation*} 
\sup_{0 \leq t \leq T} \biggl\vert \int_0^t \Bigl\langle \exp\bigl( {\tfrac1{4 \pi^2}} \delta \Delta_x\bigr) X_s^{\varepsilon}, \d \eta_s^{\varepsilon} \Bigr\rangle_2
\biggr\vert \longrightarrow 0
\end{equation*} 
in probability as $\delta$ tends to $0$, see 
\cite[Proposition 4.11]{delarueHammersley2022rshe}. 

The second observation is that
\begin{equation*}
\begin{split}
 {\frac{1}{2\varepsilon^{2\alpha}}}\frac{\d A_{\delta,t}^1}{\d t}&=
-   \bigl\langle A_\delta \nabla_x X_t^{\varepsilon}, A_\delta \nabla_x X_t^{\varepsilon} 
\bigr\rangle
= - \sum_{k \geq 0} \exp \bigl( - \delta k^2 \bigr) \bigl( 1 - e^{ - \gamma \varepsilon^{2 \beta} k^2}  \bigr) 
\bigl\vert
\widehat{ \nabla_x X_t^{\varepsilon}}^k
\bigr\vert^2
\\
& \leq  - 
\sum_{\varepsilon^{-\beta} < k  \leq  \psi  \varepsilon^{-\beta} } \exp \bigl( - \delta k^2 \bigr)
\bigl( 1 - e^{ - \gamma \varepsilon^{2 \beta} k^2}  \bigr)
 \bigl\vert
\widehat{ \nabla_x X_t^{\varepsilon}}^k
\bigr\vert^2
\\
&\hspace{15pt} -
\bigl( 1 - e^{ - \gamma \psi^2}\bigr) \sum_{k  >  \psi  \varepsilon^{-\beta} } \exp \bigl( - \delta k^2 \bigr) \bigl\vert
\widehat{ \nabla_x X_t^{\varepsilon}}^k
\bigr\vert^2.
\end{split}
\end{equation*} 
The third observation is that 
\begin{equation*}
\begin{split}
\frac{\d  A_{\delta,t}^4}{\d t} &= 
\varepsilon^2 \frac{\d}{\d t} \bigl\llangle A_\delta^2   \d W_t^{\varepsilon} \bigr\rrangle_t 
\\
&\leq
\varepsilon^2 \sum_{k \geq 0} \bigl( \lambda_k^{\varepsilon} \bigr)^2 \bigl( 1 - e^{ - \gamma \varepsilon^{2 \beta} k^2}  \bigr) 
\\
&\leq \vartheta_1 \varepsilon^2 \sum_{k \leq   \varepsilon^{-\beta}} 
\bigl( 1 - e^{ - \gamma \varepsilon^{2 \beta} k^2}  \bigr) 
+
 {\vartheta_2  
\varepsilon^2 \sum_{  \varepsilon^{- \beta} < k \leq \psi \varepsilon^{- \beta} }}  
\bigl( 1 - e^{ - \gamma \varepsilon^{2 \beta} k^2}  \bigr) 
+ \varepsilon^2 
\sum_{k >  {\psi} \varepsilon^{-\beta}} \frac{1}{k^{2 \lambda}}
\\
&\leq  \gamma  {( \vartheta_1 + \vartheta_2 {\psi^3})} \varepsilon^{2-\beta}  
+ {c} \varepsilon^{2+\beta(2 \lambda-1)},
\end{split}
\end{equation*} 
{for a constant $c$ depending on $\lambda$.}
 
Using the fact that 
${\mathscr D} V(X_0)= 0$, the fourth observation is that 
\begin{equation*} 
\begin{split} 
&\frac{\d  A_{\delta,t}^3 }{\d t}
\leq 2  \bigl\| A_\delta^2 X_t^\varepsilon \bigr\|_2 \, \|{\mathscr D}  V(X_t^{\varepsilon}) - {\mathscr D}  V(X_0) \|_2 
\\
&{\leq C}  \bigl\| A_\delta^2 X_t^\varepsilon \bigr\|_2 \, \|  X_t^{\varepsilon} - X_0 \|_2
\\
&{\leq C}  \|  X_t^{\varepsilon} - X_0 \|_2
\biggl[   \gamma
 \sum_{k \leq \varepsilon^{-\beta}} 
 \bigl\vert \widehat{X_t^{\varepsilon}}^k 
 \bigr\vert^2 + 
 \sum_{ \varepsilon^{-\beta} < k \leq \psi \varepsilon^{-\beta}}
 \bigl( 1 - e^{ - \gamma \varepsilon^{2 \beta} k^2}  \bigr) 
 \bigl\vert 
  \widehat{X_t^{\varepsilon}}^k 
 \bigr\vert^2
 + \sum_{k > \psi \varepsilon^{-\beta}} 
\bigl\vert 
\widehat{X_t^{\varepsilon}}^k \bigr\vert^2
\biggr]^{1/2}. 
\end{split}
\end{equation*} 
Letting $\delta$ tend to $0$ in 
\eqref{eq:Ito:A}, we finally have
(for $\varepsilon$ small enough with respect to $\gamma$, 
{$\lambda$}, 
$\vartheta_1$ and $\vartheta_2$, {and for a constant $C$ only depending on the local properties of $V$})
\begin{equation*} 
\begin{split}
&\d_t \bigl[ \| A X_t^{\varepsilon} \|_2^2 \bigr] + 
 {2} \varepsilon^{2 \alpha}
  \Bigl[
  \sum_{\varepsilon^{-\beta} < k \leq \psi \varepsilon^{-\beta}} \bigl( 1 - e^{ - \gamma \varepsilon^{2 \beta} k^2}  \bigr)  \bigl\vert
\widehat{ \nabla_x X_t^{\varepsilon}}^k \bigr\vert^2
+
 \bigl( 1 - e^{ - \gamma \psi^2} \bigr) \sum_{k > \psi \varepsilon^{-\beta}}  \bigl\vert
\widehat{ \nabla_x X_t^{\varepsilon}}^k \bigr\vert^2 \Bigr] \d t 
\\
& \leq  C \Bigl[ \gamma  {(\vartheta_1 + \vartheta_2 {\psi^3})}\varepsilon^{2-  \beta} 
+
{c \varepsilon^{2+\beta(2 \lambda-1)}}
\Bigr]
\d t 
\\
&\hspace{5pt} + C 
 \| X_t^\varepsilon - X_0 \|_2
 \biggl[ \gamma  
 \sum_{k \leq \varepsilon^{-\beta}} 
 \bigl\vert \widehat{X_t^{\varepsilon}}^k 
 \bigr\vert^2 + 
 \sum_{ \varepsilon^{-\beta} < k \leq \psi \varepsilon^{-\beta}}
\bigl( 1 - e^{ - \gamma \varepsilon^{2 \beta} k^2}  \bigr) 
 \bigl\vert 
  \widehat{X_t^{\varepsilon}}^k 
 \bigr\vert^2
 + \sum_{k > \psi \varepsilon^{-\beta}} 
\bigl\vert 
\widehat{X_t^{\varepsilon}}^k \bigr\vert^2
\biggr]^{1/2}
\d t 
\\
&\hspace{5pt} + 
2 \varepsilon \bigl\langle A^2 X_t^{\varepsilon}, \d W_t^{\varepsilon}\bigr\rangle_2.
\end{split}
\end{equation*}
We now let
\begin{equation*}
\begin{split}
{\mathcal A}_t^{\varepsilon} := 
\exp \Bigl(- \varrho \frac{\| A X_t^{\varepsilon} \|_2^2}{\varepsilon^2} 
\Bigr),
\end{split}
\end{equation*} 
and we compute
$\d_t {\mathcal A}_t^{\varepsilon}$.  
We get 
\begin{equation*}
\begin{split}
&\d_t {\mathcal A}_t^{\varepsilon}
 {=}  - \frac{\varrho}{\varepsilon^2} 
{\mathcal A}_t^{\varepsilon} \d_t \bigl[ \| A X_t^{\varepsilon} \|_2^2 \bigr]
+ 2 \frac{\varrho^2}{\varepsilon^2} 
{\mathcal A}_t^{\varepsilon} \bigl\llangle \bigl\langle A^2 X_t^{\varepsilon}, \d W_t^{\varepsilon} \bigr\rangle_2 \bigr\rrangle,
\end{split}
\end{equation*} 
where 
\begin{equation*} 
\begin{split}
\bigl\llangle \bigl\langle A^2 X_t^{\varepsilon}, \d W_t^{\varepsilon} \bigr\rangle_2 \bigr\rrangle
&= \sum_{k \geq 0} \bigl( \lambda_k^{\varepsilon} \bigr)^2 \bigl( 1 - \exp\bigl[ -  \gamma \varepsilon^{2 \beta} k^2\bigr] \bigr)^2
\bigl\lvert \widehat{X_t^{\varepsilon}}^k 
\bigr\rvert^2.
\end{split} 
\end{equation*}
And then, 
\begin{align}
&\d_t {\mathcal A}_t^{\varepsilon}
  \geq 
  -  C \frac{\varrho \gamma ( \vartheta_1 + \vartheta_2 
  {\psi^3})
  +
  {c  \varepsilon^{2 \lambda \beta}}
  }{ \varepsilon^{ \beta}} {\mathcal A}_t^{\varepsilon} \d t 
\nonumber
\\
&\hspace{5pt} +
 2 \frac{\varrho^2}{\varepsilon^2} 
{\mathcal A}_t^{\varepsilon}
\biggl[
 \vartheta_2
\sum_{\varepsilon^{-\beta} < k \leq \psi \varepsilon^{-\beta}}   \bigl( 1 -  e^{ -  \gamma \varepsilon^{2 \beta} k^2} \bigr)^2
\bigl\lvert \widehat{X_t^{\varepsilon}}^k 
\bigr\rvert^2 
+
\sum_{k> \psi \varepsilon^{-\beta}} \frac{1}{k^{2 \lambda}}  \bigl( 1 -  e^{ -  \gamma \varepsilon^{2 \beta} k^2} \bigr)^2
\bigl\lvert \widehat{X_t^{\varepsilon}}^k 
\bigr\rvert^2
\biggr]
\d t \nonumber
\\
&\hspace{5pt} 
+
 {2}
 \frac{\varrho }{   \varepsilon^{2-2 \alpha}}  {\mathcal A}_t^{\varepsilon}  
 \biggl[ 
 \sum_{\varepsilon^{-\beta} < k \leq  \psi \varepsilon^{-\beta}}
 \bigl( 1 -  e^{ -  \gamma \varepsilon^{2 \beta} k^2} \bigr)  \bigl\vert
\widehat{ \nabla_x X_t^{\varepsilon}}^k \bigr\vert^2 
+ 
  ( 1 - e^{ - \gamma \psi^2} )
 \sum_{k > \psi \varepsilon^{-\beta}}  \bigl\vert
\widehat{ \nabla_x X_t^{\varepsilon}}^k \bigr\vert^2 
\biggr] \d t 
\label{eq:expansion:atvarepsilon}
\\
&\hspace{5pt}-
C \frac{\varrho}{\varepsilon^2} {\mathcal A}_t^{\varepsilon} 
 \| X_t^\varepsilon - X_0 \|_2
  \biggl[ \gamma
 \sum_{k \leq \varepsilon^{-\beta}} 
 \bigl\vert \widehat{X_t^{\varepsilon}}^k 
 \bigr\vert^2 + 
 \sum_{ \varepsilon^{-\beta} < k \leq \psi \varepsilon^{-\beta}}
\bigl( 1 - e^{ -  \gamma \varepsilon^{2 \beta} k^2} \bigr)
 \bigl\vert 
  \widehat{X_t^{\varepsilon}}^k 
 \bigr\vert^2
 + \sum_{k > \psi \varepsilon^{-\beta}} 
\bigl\vert 
\widehat{X_t^{\varepsilon}}^k \bigr\vert^2
\biggr]^{1/2}
\hspace{-3pt} 
\d t
\nonumber
\\
&\hspace{5pt} -
2 \frac{\varrho}{\varepsilon} {\mathcal A}_t^{\varepsilon} \bigl\langle A^2 X_t^{\varepsilon}, \d W_t^{\varepsilon}\bigr\rangle_2.
\nonumber
\end{align} 

\textit{Third Step.} 
By combining 
\eqref{eq:expansion:dtvarepsilon}
and
\eqref{eq:expansion:atvarepsilon}, we now
 compute $\d_t ( {\mathcal A}_t^{\varepsilon} {\mathcal D}_t^{\varepsilon})$. Obviously, 
\begin{equation*}
\begin{split}
\d_t \bigl( {\mathcal A}_t^{\varepsilon} {\mathcal D}_t^{\varepsilon} \bigr)
&=  {\mathcal A}_t^{\varepsilon} \d_t  {\mathcal D}_t^{\varepsilon} 
+ {\mathcal D}_t^{\varepsilon}  \d_t  {\mathcal A}_t^{\varepsilon}  
 + \d_t \bigl\llangle {\mathcal A}_t^{\varepsilon}, {\mathcal D}_t^{\varepsilon} \bigr\rrangle_t.
\end{split}
\end{equation*}
We first compute
\begin{equation}
\label{eq:expansion:d(at,dt)varepsilon}
\begin{split}
\frac{\d 
\bigl\llangle {\mathcal A}_t^{\varepsilon}, {\mathcal D}_t^{\varepsilon} \bigr\rrangle_t}{\d t}
 &= - 4 \frac{\sigma \varrho}{\varepsilon^2} {\mathcal A}_t^{\varepsilon} 
  {\mathcal D}_t^{\varepsilon}
 \sum_{k \geq 0}\bigl( \lambda_k^{\varepsilon} \bigr)^2 
 \widehat{A^2 X_t^{\varepsilon}}^k \widehat{X_t^{\varepsilon}}^k \d t
 \\
 &= - 4 \frac{\sigma \varrho}{\varepsilon^2} {\mathcal A}_t^{\varepsilon} 
  {\mathcal D}_t^{\varepsilon}
 \sum_{k \geq 0}\bigl( \lambda_k^{\varepsilon} \bigr)^2 
 \bigl( 1 - \exp(-\gamma \varepsilon^{2 \beta} k^2) \bigr) \bigl\vert \widehat{X_t^{\varepsilon}}^k
   \bigr\vert^2 \d t
 \\
 &\geq - 4   \frac{\sigma \varrho}{\varepsilon^2} {\mathcal A}_t^{\varepsilon} 
  {\mathcal D}_t^{\varepsilon}
  \biggl[
  \gamma
  \vartheta_1  
  \sum_{k \leq \varepsilon^{-\beta}}
  \bigl\vert  \widehat{X_t^{\varepsilon}}^k
 \bigr\vert^2 
 + 
\vartheta_2 
  \sum_{ \varepsilon^{-\beta} < k \leq \psi \varepsilon^{-\beta} }
  \bigl( 1 - e^{ -  \gamma \varepsilon^{2 \beta} k^2} \bigr)
  \bigl\vert  \widehat{X_t^{\varepsilon}}^k
 \bigr\vert^2 
 \\
&\hspace{15pt} +
  \sum_{k >  \psi \varepsilon^{-\beta} }
  \frac{1 - e^{ -  \gamma \varepsilon^{2 \beta} k^2}}{k^{2\lambda}}  \bigl\vert  \widehat{X_t^{\varepsilon}}^k
 \bigr\vert^2 
  \biggr].
 \end{split} 
\end{equation}
And then, by  
\eqref{eq:expansion:dtvarepsilon}, 
\eqref{eq:expansion:atvarepsilon}
and
\eqref{eq:expansion:d(at,dt)varepsilon}, 
\begin{equation*}
\begin{split}
&\d_t \bigl( {\mathcal A}_t^{\varepsilon} {\mathcal D}_t^{\varepsilon}
\bigr) 
\\
&\hspace{5pt}+
C \frac{\varrho \gamma ( \vartheta_1 + \vartheta_2 {\psi^3})
+
{c  \varepsilon^{2 \lambda \beta}}
}{ \varepsilon^{ \beta}} {\mathcal A}_t^{\varepsilon}  {\mathcal D}_t^{\varepsilon} \d t 
\\
&\hspace{5pt}
+C
\frac{\varrho}{\varepsilon^2} {\mathcal A}_t^{\varepsilon}  {\mathcal D}_t^{\varepsilon}
 \| X_t^\varepsilon - X_0 \|_2
 \biggl[ \gamma  
 \sum_{k \leq \varepsilon^{-\beta}} 
 \bigl\vert \widehat{X_t^{\varepsilon}}^k 
 \bigr\vert^2 + 
 \hspace{-5pt} 
 \sum_{ \varepsilon^{-\beta} < k \leq \psi \varepsilon^{-\beta}}
 \bigl( 1 - e^{ -  \gamma \varepsilon^{2 \beta} k^2} \bigr)
   \bigl\vert 
  \widehat{X_t^{\varepsilon}}^k 
 \bigr\vert^2
 + \sum_{k > \psi \varepsilon^{-\beta}} 
\bigl\vert 
\widehat{X_t^{\varepsilon}}^k \bigr\vert^2
\biggr]^{1/2}
\hspace{-3pt} 
\d t
\\
&\hspace{5pt} + 
4   \frac{\sigma \varrho}{\varepsilon^2} {\mathcal A}_t^{\varepsilon} 
  {\mathcal D}_t^{\varepsilon}
  \biggl[
  \gamma \vartheta_1
  \sum_{k \leq \varepsilon^{-\beta}}
  \bigl\vert  \widehat{X_t^{\varepsilon}}^k
 \bigr\vert^2 
 + 
\vartheta_2
  \sum_{ \varepsilon^{-\beta} < k \leq \psi \varepsilon^{-\beta} }
 \bigl( 1 - e^{  - \gamma \varepsilon^{2\beta} k^2} \bigr)
  \bigl\vert  \widehat{X_t^{\varepsilon}}^k
 \bigr\vert^2 
 \\
&\hspace{75pt} +
  \sum_{k >  \psi \varepsilon^{-\beta} }
  \frac{1 - e^{ -  \gamma \varepsilon^{2 \beta} k^2}}{k^{2\lambda}}  \bigl\vert  \widehat{X_t^{\varepsilon}}^k
 \bigr\vert^2 
  \biggr] \d t
  \\
&\hspace{5pt} + C \frac{\sigma}{\varepsilon^2} 
 {\mathcal A}_t^{\varepsilon}  {\mathcal D}_t^{\varepsilon} 
\| X_t^{\varepsilon} - X_0 \|_2 
\| X_t^{\varepsilon} \|_2 
 \d t
 +
\frac{2\sigma}{\varepsilon^{2-2 \alpha}}
 {\mathcal A}_t^{\varepsilon}  {\mathcal D}_t^{\varepsilon}   \| \nabla_x X_t^{\varepsilon}  \|_2^2 \d t 
\\ 
&\geq  
{\sigma} 
 {\mathcal A}_t^{\varepsilon}
 {\mathcal D}_t^{\varepsilon} \biggl[ \vartheta_1 \varepsilon^{-\beta} + 
\vartheta_2
\lfloor (\psi-1) \varepsilon^{-\beta} \rfloor 
+
\sum_{k >  {\psi} \varepsilon^{-\beta}} \frac1{k^{2\lambda}}
\biggr] \d t
\\
&\hspace{5pt} 
+ \frac{2\sigma^2}{\varepsilon^2} 
 {{\mathcal A}_t^{\varepsilon}}
{\mathcal D}_t^{\varepsilon} 
\biggl[ \vartheta_1 \sum_{k \leq    \varepsilon^{-\beta}}
\bigl\vert \widehat{X_t^{\varepsilon}}^k
 \bigr\vert^2 
+ 
 { \vartheta_2 
\sum_{ \varepsilon^{-\beta} < k \leq \psi \varepsilon^{-\beta}}
\bigl\vert \widehat{X_t^{\varepsilon}}^k
 \bigr\vert^2 }
  +
 \sum_{k > {\psi} \varepsilon^{-\beta}}
\frac1{k^{2 \lambda}}
\bigl\vert \widehat{X_t^{\varepsilon}}^k  \bigr\vert^2 
\biggr] \d t  
\\
&\hspace{5pt} 
+
 2 \frac{\varrho^2}{\varepsilon^2} 
{\mathcal A}_t^{\varepsilon}
{\mathcal D}_t^{\varepsilon}
\biggl[
 \vartheta_2
\sum_{\varepsilon^{-\beta} < k \leq \psi \varepsilon^{-\beta}}   \bigl( 1 -  e^{ -  \gamma \varepsilon^{2 \beta} k^2} \bigr)^2
\bigl\lvert \widehat{X_t^{\varepsilon}}^k 
\bigr\rvert^2 
+
\sum_{k> \psi \varepsilon^{-\beta}} \frac{1}{k^{2 \lambda}}  \bigl( 1 -  e^{ -  \gamma \varepsilon^{2 \beta} k^2} \bigr)^2
\bigl\lvert \widehat{X_t^{\varepsilon}}^k 
\bigr\rvert^2 
\biggr]
\d t 
\\
&\hspace{5pt} 
+
 {2}
 \frac{\varrho }{   \varepsilon^{2-2 \alpha}}  {\mathcal A}_t^{\varepsilon}  {\mathcal D}_t^{\varepsilon} 
 \biggl[ 
 \sum_{\varepsilon^{-\beta} < k \leq  \psi \varepsilon^{-\beta}}
 \bigl( 1 -  e^{ -  \gamma \varepsilon^{2 \beta} k^2} \bigr)  \bigl\vert
\widehat{ \nabla_x X_t^{\varepsilon}}^k \bigr\vert^2 
+ 
  ( 1 - e^{ - \gamma \psi^2} )
 \sum_{k > \psi \varepsilon^{-\beta}}  \bigl\vert
\widehat{ \nabla_x X_t^{\varepsilon}}^k \bigr\vert^2 
\biggr] \d t
+ \d m_t,
\end{split} 
\end{equation*}
where $(m_t)_{t \geq 0}$ is a generic martingale term. 
Then, we observe that, on the fifth line,  
\begin{equation*}
\begin{split}
\bigl\| \nabla_x X_t^{\varepsilon}  \bigr\|_2^2 &=
\sum_{k \leq  \psi \varepsilon^{- \beta}} 
\bigl\vert
\widehat{ \nabla_x X_t^{\varepsilon}}^k \bigr\vert^2
+
\sum_{k > \psi \varepsilon^{- \beta}} 
\bigl\vert
\widehat{ \nabla_x X_t^{\varepsilon}}^k \bigr\vert^2
\leq 
\psi^{2} \varepsilon^{-2 \beta} \sum_{k \leq \psi \varepsilon^{-\beta}}
\bigl\vert \widehat{X_t^{\varepsilon}}^k \bigr\vert^2
+
\sum_{k > \psi \varepsilon^{- \beta}} 
\bigl\vert
\widehat{ \nabla_x X_t^{\varepsilon}}^k \bigr\vert^2
. 
\end{split}
\end{equation*} 
We get, {by considering {$t < \tau_\varepsilon$}},
\begin{align}
&\d_t \bigl( {\mathcal A}_t^{\varepsilon} {\mathcal D}_t^{\varepsilon}
\bigr) \nonumber
\\
&\hspace{5pt}+
C \frac{\varrho \gamma ( \vartheta_1 + \vartheta_2 {\psi^3})
+
{c  \varepsilon^{2 \lambda \beta}}
}{ \varepsilon^{ \beta}} {\mathcal A}_t^{\varepsilon}  {\mathcal D}_t^{\varepsilon} \d t 
\label{eq:ad:1}
\\
&\hspace{5pt}
+C
\frac{\varrho a}{\varepsilon^2} {\mathcal A}_t^{\varepsilon}  {\mathcal D}_t^{\varepsilon}
 \biggl[ \gamma  
 \sum_{k \leq \varepsilon^{-\beta}} 
 \bigl\vert \widehat{X_t^{\varepsilon}}^k 
 \bigr\vert^2 + 
 \sum_{ \varepsilon^{-\beta} < k \leq \psi \varepsilon^{-\beta}}
 \bigl( 1 - e^{ -  \gamma \varepsilon^{2 \beta} k^2} \bigr)
 \bigl\vert 
  \widehat{X_t^{\varepsilon}}^k 
 \bigr\vert^2
 + \sum_{k > \psi \varepsilon^{-\beta}} 
\bigl\vert 
\widehat{X_t^{\varepsilon}}^k \bigr\vert^2
\biggr]^{1/2}
\d t
\label{eq:ad:2}
\\
&\hspace{5pt}   \label{eq:ad:3} + 
4   \frac{\sigma \varrho}{\varepsilon^2} {\mathcal A}_t^{\varepsilon} 
  {\mathcal D}_t^{\varepsilon}
  \biggl[
  \gamma 
   \vartheta_1
  \sum_{k \leq \varepsilon^{-\beta}}
  \bigl\vert  \widehat{X_t^{\varepsilon}}^k
 \bigr\vert^2 
 + 
\vartheta_2
  \sum_{ \varepsilon^{-\beta} < k \leq \psi \varepsilon^{-\beta} }
 \bigl( 1 - e^{-\gamma \varepsilon^{2 \beta} k^2} \bigr) 
  \bigl\vert  \widehat{X_t^{\varepsilon}}^k
 \bigr\vert^2
 \\ 
 &\hspace{75pt} 
 +
  \sum_{k >  \psi \varepsilon^{-\beta} }
  \frac{  1 - e^{-\gamma \varepsilon^{2 \beta} k^2}  }{k^{2\lambda}}  \bigl\vert  \widehat{X_t^{\varepsilon}}^k
 \bigr\vert^2 
  \biggr] \d t \nonumber
  \\
  &\hspace{5pt} + C \frac{ \sigma a}{\varepsilon^2} {\mathcal A}_t^{\varepsilon} 
{\mathcal D}_t^{\varepsilon} 
   \biggl[  \vartheta_1
 \sum_{k \leq \varepsilon^{-\beta}} 
 \bigl\vert \widehat{X_t^{\varepsilon}}^k 
 \bigr\vert^2 + 
\vartheta_2
 \sum_{ \varepsilon^{-\beta} < k \leq \psi \varepsilon^{-\beta}}
 \bigl\vert 
  \widehat{X_t^{\varepsilon}}^k 
 \bigr\vert^2
 + \sum_{k > \psi \varepsilon^{-\beta}} 
\bigl\vert 
\widehat{X_t^{\varepsilon}}^k \bigr\vert^2
\biggr]^{1/2}
\label{eq:ad:4}
 \\
&\hspace{5pt} +
\frac{2\sigma}{\varepsilon^{2-2 \alpha}}
 {\mathcal A}_t^{\varepsilon}  {\mathcal D}_t^{\varepsilon}   
 \biggl[
 \psi^{2} \varepsilon^{-2 \beta} \sum_{k \leq \psi \varepsilon^{-\beta}}
\bigl\vert \widehat{X_t^{\varepsilon}}^k \bigr\vert^2
+
 \sum_{k >\psi \varepsilon^{- \beta}} 
\bigl\vert
\widehat{ \nabla_x X_t^{\varepsilon}}^k \bigr\vert^2
\biggr]
 \d t
 \label{eq:ad:5} 
\\
&\geq 
\frac{\sigma \vartheta_1}{\varepsilon^\beta} {\mathcal A}_t^{\varepsilon} {\mathcal D}_t^{\varepsilon}     \d t
\label{eq:ad:6}
\\
&\hspace{5pt} 
+
\frac{2\sigma^2}{\varepsilon^2} 
{\mathcal A}_t^{\varepsilon} 
{\mathcal D}_t^{\varepsilon} 
\biggl[ \vartheta_1 \sum_{k \leq   \varepsilon^{-\beta}}
\bigl\vert \widehat{X_t^{\varepsilon}}^k \bigr\vert^2 
+ 
 { \vartheta_2
\sum_{\varepsilon^{-\beta} < k \leq \psi \varepsilon^{-\beta}}
\bigl\vert \widehat{X_t^{\varepsilon}}^k \bigr\vert^2 }
 +
  \sum_{k > {\psi} \varepsilon^{-\beta}}
\frac1{k^{2 \lambda}}
\bigl\vert \widehat{X_t^{\varepsilon}}^k \bigr\vert^2 
\biggr] \d t 
\label{eq:ad:7}
\\
&\hspace{5pt} 
+
\frac{2\varrho^2}{\varepsilon^2} 
{\mathcal A}_t^{\varepsilon} 
{\mathcal D}_t^{\varepsilon} 
\biggl[ 
\vartheta_2
\sum_{\varepsilon^{-\beta} \leq k \leq \psi \varepsilon^{-\beta}}
 \bigl( 1 - e^{-\gamma \varepsilon^{2 \beta} k^2} \bigr)^2  \bigl\vert \widehat{X_t^{\varepsilon}}^k \bigr\vert^2 
 +
\sum_{k > \psi \varepsilon^{-\beta}}
\frac{ \bigl( 1 - e^{-\gamma \varepsilon^{2 \beta} k^2} \bigr)^2}{k^{2 \lambda}}  \bigl\vert \widehat{X_t^{\varepsilon}}^k \bigr\vert^2  
 \biggr]  \d t 
\label{eq:ad:9}
\\
&\hspace{5pt} 
+
 \frac{2\varrho }{   \varepsilon^{2-2 \alpha}}  {\mathcal A}_t^{\varepsilon}  {\mathcal D}_t^{\varepsilon} 
 \biggl[ 
 \sum_{\varepsilon^{-\beta} < k \leq  \psi \varepsilon^{-\beta}}
 \bigl( 1 -  e^{ -  \gamma \varepsilon^{2 \beta} k^2} \bigr)  \bigl\vert
\widehat{ \nabla_x X_t^{\varepsilon}}^k \bigr\vert^2 
+ 
  ( 1 - e^{ - \gamma \psi^2} )
 \sum_{k > \psi \varepsilon^{-\beta}}  \bigl\vert
\widehat{ \nabla_x X_t^{\varepsilon}}^k \bigr\vert^2 
\biggr] \d t
\label{eq:ad:8}
\\
&\hspace{5pt} + \d m_t.
\nonumber
\end{align}

\textit{Fourth Step.}
We treat all the terms above, one by one, choosing $\alpha=\beta=2$. Each term on the left-hand side (except the first one) must be less than 
(up to a multiplicative constant)
the sum of some 
of the terms on the right-hand side. 

We first compare \eqref{eq:ad:1} and \eqref{eq:ad:6}. 
This prompts us to require (modifying the value of the constant
$C$ if necessary) 
\begin{equation}
\label{eq:condition:1}
\sigma \vartheta_1 \geq  C \Bigl[ {\varrho} \gamma \bigl( \vartheta_1+ \vartheta_2  { \psi^3}\bigr)
+
{c  \varepsilon^{2 \lambda \beta}} \Bigr].
\end{equation} 

We now handle \eqref{eq:ad:2}. The idea is to compare it to the three terms \eqref{eq:ad:6}, \eqref{eq:ad:7} and 
\eqref{eq:ad:8}. In order to do so, 
we make use of Young's inequality, writing 
\begin{equation*} 
\begin{split} 
&C \varrho a 
 \biggl[ \gamma  
 \sum_{k \leq \varepsilon^{-\beta}} 
 \bigl\vert \widehat{X_t^{\varepsilon}}^k 
 \bigr\vert^2 + 
 \sum_{ \varepsilon^{-\beta} < k \leq \psi \varepsilon^{-\beta}}
  \bigl( 1 -  e^{ -  \gamma \varepsilon^{2 \beta} k^2} \bigr) 
 \bigl\vert 
  \widehat{X_t^{\varepsilon}}^k 
 \bigr\vert^2
 + \sum_{k > \psi \varepsilon^{-\beta}} 
\bigl\vert 
\widehat{X_t^{\varepsilon}}^k \bigr\vert^2
\biggr]^{1/2}
\\
&\hspace{5pt} \leq
\sigma \vartheta_1
+ 
 \frac{C^2 \varrho^2 a^2}{ {4}\sigma \vartheta_1}  
\biggl[ \gamma  
 \sum_{k \leq \varepsilon^{-\beta}} 
 \bigl\vert \widehat{X_t^{\varepsilon}}^k 
 \bigr\vert^2 + 
 \sum_{ \varepsilon^{-\beta} < k \leq \psi \varepsilon^{-\beta}}
  \bigl( 1 -  e^{ -  \gamma \varepsilon^{2 \beta} k^2} \bigr) 
 \bigl\vert 
  \widehat{X_t^{\varepsilon}}^k 
 \bigr\vert^2
 + \sum_{k > \psi \varepsilon^{-\beta}} 
\bigl\vert 
\widehat{X_t^{\varepsilon}}^k \bigr\vert^2
\biggr]
\\
&\hspace{5pt}  \leq 
\sigma \vartheta_1
+ 
 \frac{C^2 \varrho^2 a^2}{ {4}\sigma \vartheta_1}  
\biggl[ \gamma  
 \sum_{k \leq \varepsilon^{-\beta}} 
 \bigl\vert \widehat{X_t^{\varepsilon}}^k 
 \bigr\vert^2 + 
 \sum_{ \varepsilon^{-\beta} < k \leq \psi \varepsilon^{-\beta}}
 \bigl( 1 -  e^{ -  \gamma \varepsilon^{2 \beta} k^2} \bigr) 
  \bigl\vert 
  \widehat{X_t^{\varepsilon}}^k 
 \bigr\vert^2
 + \frac{\varepsilon^{2 \beta}}{\psi^2} \sum_{k > \psi \varepsilon^{-\beta}} 
k^2 \bigl\vert 
\widehat{X_t^{\varepsilon}}^k \bigr\vert^2
\biggr]. 
\end{split} 
\end{equation*} 
Assuming that there exists $\delta >0$ such that $\gamma^{-\delta} 
\leq \psi$, we notice that 
\begin{equation*}
\begin{split} 
&\sum_{ \varepsilon^{-\beta} < k \leq \psi \varepsilon^{-\beta} }
 \bigl( 1 - e^{-\gamma \varepsilon^{2 \beta} k^2} \bigr) 
  \bigl\vert  \widehat{X_t^{\varepsilon}}^k
 \bigr\vert^2
 \\
 &{=} 
  \sum_{ \varepsilon^{-\beta} < k \leq  \gamma^{-\delta} \varepsilon^{-\beta}   }
 \bigl( 1 - e^{-\gamma \varepsilon^{2 \beta} k^2} \bigr) 
  \bigl\vert  \widehat{X_t^{\varepsilon}}^k
 \bigr\vert^2 +
  \sum_{    \gamma^{-\delta} \varepsilon^{-\beta} <
  k \leq \psi \varepsilon^{-\beta}  }
 \bigl( 1 - e^{-\gamma \varepsilon^{2 \beta} k^2} \bigr) 
  \bigl\vert  \widehat{X_t^{\varepsilon}}^k
 \bigr\vert^2
 \\
 &\leq \gamma^{1-2 \delta} 
  \sum_{ \varepsilon^{-\beta} < k \leq  \gamma^{-\delta}  \varepsilon^{-\beta}   }
  \bigl\vert  \widehat{X_t^{\varepsilon}}^k
 \bigr\vert^2
 + 
 \varepsilon^{2 \beta} 
\gamma^{2 \delta} 
 \sum_{ \gamma^{-\delta} \varepsilon^{-\beta} < k\leq 
 \psi \varepsilon^{-\beta} }
 k^2 
  \bigl( 1 - e^{-\gamma \varepsilon^{2 \beta} k^2} \bigr) 
   \bigl\vert  \widehat{X_t^{\varepsilon}}^k
 \bigr\vert^2.
\end{split}
\end{equation*} 
And then, 
\begin{equation} 
\label{eq:new:bound:epsi:k:epsi}
\begin{split} 
&C \varrho a 
 \biggl[ \gamma  
 \sum_{k \leq \varepsilon^{-\beta}} 
 \bigl\vert \widehat{X_t^{\varepsilon}}^k 
 \bigr\vert^2 + 
 \sum_{ \varepsilon^{-\beta} < k \leq \psi \varepsilon^{-\beta}}
  \bigl( 1 -  e^{ -  \gamma \varepsilon^{2 \beta} k^2} \bigr) 
 \bigl\vert 
  \widehat{X_t^{\varepsilon}}^k 
 \bigr\vert^2
 + \sum_{k > \psi \varepsilon^{-\beta}} 
\bigl\vert 
\widehat{X_t^{\varepsilon}}^k \bigr\vert^2
\biggr]^{1/2}
\\
&\hspace{0pt}  \leq 
\sigma \vartheta_1
+ 
 \frac{C^2 \varrho^2 a^2}{ {4}\sigma \vartheta_1}  
\biggl[ \gamma  
 \sum_{k \leq \varepsilon^{-\beta}} 
 \bigl\vert \widehat{X_t^{\varepsilon}}^k 
 \bigr\vert^2 +
 \gamma^{1-2 \delta} 
  \sum_{ \varepsilon^{-\beta} < k \leq  \gamma^{-\delta}  \varepsilon^{-\beta}   }
  \bigl\vert  \widehat{X_t^{\varepsilon}}^k
 \bigr\vert^2
 \\
&\hspace{15pt} + \varepsilon^{2 \beta}
\gamma^{2 \delta} 
 \sum_{ \gamma^{-\delta} \varepsilon^{-\beta} < k\leq 
 \psi \varepsilon^{-\beta} }
 k^2 
  \bigl( 1 - e^{-\gamma \varepsilon^{2 \beta} k^2} \bigr) 
   \bigl\vert  \widehat{X_t^{\varepsilon}}^k
 \bigr\vert^2
 + \frac{\varepsilon^{2 \beta}}{\psi^2} \sum_{k > \psi \varepsilon^{-\beta}} 
k^2 \bigl\vert 
\widehat{X_t^{\varepsilon}}^k \bigr\vert^2
\biggr]. 
\end{split}
\end{equation}
Up to the leading term 
${\varepsilon^{-2}}{\mathcal A}_t^{\varepsilon} {\mathcal D}_t^{\varepsilon}$, 
the first term {in the right hand side} is equal to 
\eqref{eq:ad:6} (changing the value of the constant 
$C$ in front of the left-hand side, we may just compare to a fraction of 
\eqref{eq:ad:6}). The first and second terms inside the bracket are compared with \eqref{eq:ad:7}, but for some reasons that are clarified next, 
the index $k$ in the second term is just summed between $\varepsilon^{-\beta}$
and $\gamma^{-\delta} \varepsilon^{-\beta}$. 
In particular, we do not proceed in the same way for the third term: we could do so, but then we would get stuck for 
\eqref{eq:ad:3}. 
This prompts us to 
require 
(up to a new value of $C$) 
\begin{equation}
\label{eq:ad:treat:2}  
C \gamma \varrho^2 a^2 \leq \vartheta_1^2 \sigma^3,
\quad 
C \gamma^{1-2 \delta} \varrho^2 a^2 \leq  \vartheta_1 \vartheta_2 \sigma^3.
\end{equation}
{Notice that there should be a factor $8$ appearing in each of the two right-hand sides above, but one can easily get rid of them by increasing the value of the constant $C$ in the corresponding left-hand side. We do this repeatedly in the computations below.} The third term  {inside the 
right-hand bracket of }\eqref{eq:new:bound:epsi:k:epsi}
is compared to 
the first term in 
\eqref{eq:ad:8}, which gives the condition 
\begin{equation}
\label{eq:condition:3}
C \varrho a^2 \gamma^{2 \delta} 
\leq  \sigma \vartheta_1,
\end{equation}
and the fourth term {inside the right-hand bracket of } 
\eqref{eq:new:bound:epsi:k:epsi}
is compared to the last term in 
\eqref{eq:ad:8}, which 
gives the condition
\begin{equation} 
\label{eq:condition:4}
C \varrho a^2 \leq   \psi^2 \sigma  \vartheta_1 \bigl( 1 - e^{-\gamma \psi^2} \bigr). 
\end{equation}

Next, we turn to 
\eqref{eq:ad:3}.
We compare the first term to the first term in 
\eqref{eq:ad:7}. We hence require
\begin{equation}
\label{eq:condition:5} 
C  \varrho \gamma  \leq \sigma.
\end{equation} 
{As above, there should be a factor 2 in the right-hand side, but one may easily remove it.} 

We then address the second term in \eqref{eq:ad:3}.
Proceeding as in the derivation of 
\eqref{eq:new:bound:epsi:k:epsi}, we write
\begin{align}
&4 \frac{\sigma \varrho}{\varepsilon^2} {\mathcal A}_t^{\varepsilon} 
  {\mathcal D}_t^{\varepsilon}
 \vartheta_2
  \sum_{ \varepsilon^{-\beta} < k \leq \psi \varepsilon^{-\beta} }
 \bigl( 1 - e^{-\gamma \varepsilon^{2 \beta} k^2} \bigr) 
  \bigl\vert  \widehat{X_t^{\varepsilon}}^k
 \bigr\vert^2
 \nonumber
\\
&\leq
 4 \frac{\sigma \varrho}{\varepsilon^2} {\mathcal A}_t^{\varepsilon} 
  {\mathcal D}_t^{\varepsilon}
\vartheta_2 \gamma^{1-2 \delta} 
  \sum_{ \varepsilon^{-\beta} < k \leq \gamma^{-\delta} \varepsilon^{-\beta} }
  \bigl\vert  \widehat{X_t^{\varepsilon}}^k
 \bigr\vert^2
 \label{eq:ad:10}
 \\
&\hspace{15pt} +
  2 \frac{\sigma^2}{\varepsilon^2} {\mathcal A}_t^{\varepsilon} 
  {\mathcal D}_t^{\varepsilon}
 \vartheta_2 
  \sum_{ \gamma^{-\delta} \varepsilon^{-\beta} < k \leq \psi \varepsilon^{-\beta} }
  \bigl\vert  \widehat{X_t^{\varepsilon}}^k
 \bigr\vert^2
  +
2 \frac{\varrho^2}{\varepsilon^2} {\mathcal A}_t^{\varepsilon} 
  {\mathcal D}_t^{\varepsilon}
 \vartheta_2 
  \sum_{ \gamma^{-\delta} \varepsilon^{-\beta} < k \leq \psi \varepsilon^{-\beta} }
\bigl( 1 - e^{-\gamma \varepsilon^{2 \beta} k^2} \bigr)^2 
  \bigl\vert  \widehat{X_t^{\varepsilon}}^k
 \bigr\vert^2.
 \nonumber 
\end{align} 
The first term in the right-hand side is compared with the second term in 
\eqref{eq:ad:7} (but only for indices 
$k$ between $\varepsilon^{-\beta}$ and
$\gamma^{-\delta} \varepsilon^{-\beta}$), which prompts us to require 
\begin{equation}
\label{eq:condition:6}
C  \gamma^{1-2 \delta}  \varrho 
\leq \sigma,
\end{equation}
and {the sums on} the last line (of \eqref{eq:ad:10}) 
coincide with terms indexed by the same indices in 
\eqref{eq:ad:7}
and
\eqref{eq:ad:9} (which is a side property of the bracket structure of all these terms). By the same {application of Young's inequality}, we handle the last term in 
\eqref{eq:ad:3}.

 As far as \eqref{eq:ad:4} is concerned, we follow the argument used to handle \eqref{eq:ad:2}. 
 Similarly to 
\eqref{eq:new:bound:epsi:k:epsi}, one obtains 
 \begin{equation} 
\label{eq:new:bound:epsi:k:epsi:2}
\begin{split} 
&C \sigma a 
 \biggl[ \vartheta_1 
 \sum_{k \leq \varepsilon^{-\beta}} 
 \bigl\vert \widehat{X_t^{\varepsilon}}^k 
 \bigr\vert^2 + 
 \vartheta_2
 \sum_{ \varepsilon^{-\beta} < k \leq \psi \varepsilon^{-\beta}}
 \bigl\vert 
  \widehat{X_t^{\varepsilon}}^k 
 \bigr\vert^2
 + \sum_{k > \psi \varepsilon^{-\beta}} 
\bigl\vert 
\widehat{X_t^{\varepsilon}}^k \bigr\vert^2
\biggr]^{1/2}
\\
&\hspace{0pt}  \leq 
\sigma \vartheta_1
+ 
 \frac{C^2 \sigma a^2}{{4}\vartheta_1}  
\biggl[ \vartheta_1  
 \sum_{k \leq \varepsilon^{-\beta}} 
 \bigl\vert \widehat{X_t^{\varepsilon}}^k 
 \bigr\vert^2 +
 \vartheta_2
  \sum_{ \varepsilon^{-\beta} < k \leq  \gamma^{-\delta}  \varepsilon^{-\beta}   }
  \bigl\vert  \widehat{X_t^{\varepsilon}}^k
 \bigr\vert^2
 \\
&\hspace{15pt} + 
{\varepsilon^{2 \beta}}
\vartheta_2 \gamma^{2 \delta} 
 \sum_{ \gamma^{-\delta} \varepsilon^{-\beta} < k\leq 
 \psi \varepsilon^{-\beta} }
 k^2 
   \bigl\vert  \widehat{X_t^{\varepsilon}}^k
 \bigr\vert^2
 + \frac{\varepsilon^{2 \beta}}{\psi^2} \sum_{k > \psi \varepsilon^{-\beta}} 
k^2 \bigl\vert 
\widehat{X_t^{\varepsilon}}^k \bigr\vert^2
\biggr]. 
\end{split}
\end{equation}
This gives 
 \begin{equation} 
\label{eq:condition:7}
\begin{split}
& C   a^2   \leq \sigma \vartheta_1  \quad \textrm{\rm (twice)},
 \\
&C \sigma a^2  \vartheta_2 \gamma^{2 \delta} \leq 
 \varrho
 \vartheta_1 \bigl( 1 - e^{-\gamma^{1-2 \delta}} \bigr), 
 \quad  
 C \sigma a^2 \leq \varrho \vartheta_1 \psi^2  \bigl( 1 - e^{-\gamma \psi^2} \bigr). 
 \end{split}
 \end{equation} 
 
 Lastly, in \eqref{eq:ad:5}, we compare
the first term 
with the corresponding one 
in \eqref{eq:ad:7}. This gives 
 \begin{equation}
\label{eq:condition:8} 
C   \psi^2 \leq \sigma \min(\vartheta_1,\vartheta_2). 
 \end{equation} 
And the second term in 
\eqref{eq:ad:5}
is compared to \eqref{eq:ad:8}. We derive the following condition:
 \begin{equation}
\label{eq:condition:9} 
 C \sigma \leq \varrho \bigl( 1 - e^{-\gamma \psi^2} \bigr). 
 \end{equation} 

\textit{Fifth Step.}
In order to find suitable values of the parameters $\sigma,\varrho,\vartheta_1,\vartheta_2,\gamma$ and $\psi$, 
we assume $\sigma >1$ and we choose 
\begin{equation*} 
\varrho = \rho \sigma, \quad \vartheta_1 = \vartheta, \quad \vartheta_2 = \vartheta^{\theta}, \quad \gamma = \vartheta^{\chi}, 
\quad \psi = \vartheta^{-\varpi},
\end{equation*} 
with $\rho,\vartheta,\theta,\chi,\varpi>0$
and with $\vartheta$ being small. The list of conditions 
\eqref{eq:condition:1} {(wherein we can easily drop the term depending on $\varepsilon$)},
\eqref{eq:ad:treat:2}, 
\eqref{eq:condition:3}, 
\eqref{eq:condition:4},
\eqref{eq:condition:5},
\eqref{eq:condition:6},
\eqref{eq:condition:7},
\eqref{eq:condition:8}
and
\eqref{eq:condition:9} identified in the previous step rewrite:
\begin{equation}
\label{eq:condition:all} 
\begin{split} 
&\vartheta \geq  C \rho  \vartheta^{\chi} \bigl( \vartheta + \vartheta^{\theta- {3} \varpi} \bigr) 
\\
&C \rho^2 a^2 \vartheta^\chi  \leq   \vartheta^2 \sigma,
\quad 
C \rho^2 a^2 \leq   \vartheta^{1+\theta-\chi(1-2 \delta)} \sigma, 
\\
&C \rho a^2 \vartheta^{2 \chi \delta-1} \leq  {1},  
\\
&C \rho a^2 \leq    \vartheta^{1-2 \varpi} { \bigl( 1 - \exp(- \vartheta^{\chi-2\varpi})\bigr),}
\\
&\hspace{50pt} {\textrm{\rm which holds if} \ 
 C \rho a^2 \leq \tfrac12 \vartheta^{1-2 \varpi}, \quad 
 2 \varpi > \chi, 
 \quad \vartheta \ll 1}, 
\\
&C  \rho  \vartheta^{\chi} \leq 1,
\\
&C \rho  \vartheta^{\chi(1-2 \delta)} 
\leq 1,
\\
&C a^2 \leq \sigma \vartheta, 
\\
&C  a^2 \vartheta^{\theta+2\delta \chi} 
\leq {\rho \vartheta \bigl( 
1 
 - \exp(- \vartheta^{\chi(1-2 \delta)})\bigr),}
 \\
&\hspace{50pt} 
{\textrm{\rm which holds if} \ 
C  a^2 \vartheta^{\theta+2\delta \chi} 
\leq 
\tfrac12  \rho \vartheta^{1+ \chi (1- 2 \delta)}, \quad \delta < 1/2, \quad \vartheta \ll 1,}  
\\
&C   a^2 \leq \rho \vartheta^{1-2 \varpi} { \bigl( 1 - \exp(- \vartheta^{\chi-2\varpi})\bigr),}
\\
&\hspace{50pt} {\textrm{\rm which holds if} \ 
 C a^2 \leq \tfrac12 \rho  \vartheta^{1-2 \varpi}, \quad 
 2 \varpi > \chi, 
 \quad \vartheta \ll 1},
\\
&C \vartheta^{2 \varpi} \leq \sigma \min( \vartheta, \vartheta^\theta),
\\
&C  \leq \rho { \bigl( 1 - \exp(- \vartheta^{\chi-2\varpi})\bigr),}
\quad  {\textrm{\rm which holds if} \ 
 C  \leq \tfrac12 \rho, \quad 
 2 \varpi > \chi, 
 \quad \vartheta \ll 1}.
\end{split} 
\end{equation}  
We then choose $\rho={2C}$ as required in the last line. Next, under the conditions $\theta > {3} \varpi$, 
$\chi \delta >1/2$, $2 \varpi >\chi>1$, $\delta <1/2$, $\theta + 4 \delta \chi \geq 1 + \chi$ (which are not empty: fix $\delta$ first, $\chi$ second and then 
choose $\varpi$ and finish with $\theta$), we get that all the conditions without $\sigma$ nor $a$ inside are automatically satisfied for $\vartheta$ small enough. Moreover, there exists an exponent $\psi >0$ such that all the equations featuring $a$ but not $\sigma$
are satisfied under the condition $C a \leq \vartheta^{-\psi}$, {which is indeed true if 
 $\vartheta \leq [C a_0]^{-1/\psi}$
 (recall that $a_0$ is given and $a \leq a_0$).
 Finally, once
 $\vartheta$
has been chosen, all the conditions on $\sigma$ are satisfied if $\sigma \geq C_\vartheta \max(1,a^2)$.} 

Back to \eqref{eq:ad:1}--\eqref{eq:ad:8}, we
integrate from $0$ to $T \wedge \tau_{\varepsilon}$,
{for a fixed $T>0$}, 
 where 
we recall that 
$\tau_{\varepsilon} = \inf \{ t \geq 0 : \| X_t^{\varepsilon}  - X_0^\varepsilon \|_2 \geq a \}$.
Our analysis  says that, 
among all the terms in
\eqref{eq:ad:1}--\eqref{eq:ad:8}, 
we can just retain 
{a fraction of}
\eqref{eq:ad:6}
({plus a martingale term})
as a lower bound for 
${\mathcal A}_{T \wedge \tau_{\varepsilon}}^{\varepsilon} {\mathcal D}_{T 
\wedge \tau_{\varepsilon}}^{\varepsilon}$. Replacing 
$\varrho$ in the definition of ${\mathcal A}_{T \wedge \tau_{\varepsilon}}$ by its values and upper bounding 
$\| A X_{T \wedge \tau_{\varepsilon}}^{\varepsilon} \|_2$ by
$\| A (X_{T \wedge \tau_{\varepsilon}}^{\varepsilon} -X_0^{\varepsilon}) \|_2
+
\| A X_0 \|_2 \leq 
a + \| X_0 \|_2$, we obtain 
\begin{equation*} 
{\mathbb E} \Bigl[ \exp \bigl( \frac{C_\vartheta {\max(1, a^2)}}{\varepsilon^2}
\bigl[ a^2+ \| X_0 \|_2^2 \bigr] \bigr) 
 \Bigr] \geq
  \frac{1}{
   {C_\vartheta}
  \varepsilon^2} 
 {\mathbb E}( \tau_{\varepsilon}),
\end{equation*}
{for a possibly new value of $C_\vartheta$}. 
This completes the proof. 
\end{proof} 

\subsection{Completion of the Proof of
Theorem 
\ref{thm:meta}}

The proof is 
easily completed by 
combining 
the two Propositions 
\ref{eq:lower:bound} 
and
\ref{eq:upper:bound}. 
There is however one subtlety:
one must check that 
the right-hand side in 
\eqref{eq:condition:lower:bound} 
can be chosen 
as small as needed (and in particular smaller than the left-hand side). This is indeed compatible 
with the choice of the parameters 
in Proposition
\ref{eq:upper:bound}:
the two parameters $\vartheta_1$ and 
$\vartheta_2$ are small 
and the product 
$\psi \vartheta_2$
is also small because 
of the constraint 
$\theta > {3} \varpi$. As for the left-hand side in 
\eqref{eq:condition:lower:bound} , we can take $\sigma$
as small as needed (the choice of 
$\sigma$ in the statement of 
Proposition
\ref{eq:lower:bound}
has nothing to do with the choice of $\sigma$ 
in the proof of the upper bound). 
{Lastly, it must be observed that one can replace 
the exponent $\max(1,a^2)$
 appearing in
 the statement of Proposition 
\ref{eq:upper:bound}
by $a^2$, as done in 
\eqref{eq:stoppingtime:bounds:CV}, just by using the fact that 
$a \geq 1/a_0$.}

\begin{remark}
\label{rem:4.5}
When $X_0$ is  {constant}, say $X_0=0$, the proof of the upper bound is easier. In 
\eqref{eq:ad:2}
and
\eqref{eq:ad:4}, there is no need to 
upper bound 
$\| X_t^{\varepsilon} - X_0\|_2
= \| X_t^{\varepsilon} \|_2$ 
by the distance $a$ and, then, 
one just has to 
upper bound 
\begin{equation*}  
\varrho 
\| X_t^{\varepsilon} \|_2 
 \biggl[ \gamma  
 \sum_{k \leq \varepsilon^{-\beta}} 
 \bigl\vert \widehat{X_t^{\varepsilon}}^k 
 \bigr\vert^2 + 
 \sum_{ \varepsilon^{-\beta} < k \leq \psi \varepsilon^{-\beta}}
  \bigl( 1 -  e^{ -  \gamma \varepsilon^{2 \beta} k^2} \bigr) 
 \bigl\vert 
  \widehat{X_t^{\varepsilon}}^k 
 \bigr\vert^2
 + \sum_{k > \psi \varepsilon^{-\beta}} 
\bigl\vert 
\widehat{X_t^{\varepsilon}}^k \bigr\vert^2
\biggr]^{1/2}
\end{equation*} 
by 
\begin{equation*} 
\begin{split}
&\frac{\varrho { \vartheta_1} }{\zeta^2} 
\biggl[ 
 \sum_{k \leq \varepsilon^{-\beta}} 
 \bigl\vert \widehat{X_t^{\varepsilon}}^k 
 \bigr\vert^2 + 
 \sum_{ \varepsilon^{-\beta} < k \leq \psi \varepsilon^{-\beta}}
  \bigl\vert 
  \widehat{X_t^{\varepsilon}}^k 
 \bigr\vert^2
 + \frac{\varepsilon^{2 \beta}}{\psi^2} \sum_{k > \psi \varepsilon^{-\beta}} 
k^2 \bigl\vert 
\widehat{X_t^{\varepsilon}}^k \bigr\vert^2
\biggr]
\\
&\hspace{15pt} + \frac{C^2 \varrho \zeta^2}{{\vartheta_1}}  
\biggl[ \gamma  
 \sum_{k \leq \varepsilon^{-\beta}} 
 \bigl\vert \widehat{X_t^{\varepsilon}}^k 
 \bigr\vert^2 + 
 \sum_{ \varepsilon^{-\beta} < k \leq \psi \varepsilon^{-\beta}}
 \bigl( 1 -  e^{ -  \gamma \varepsilon^{2 \beta} k^2} \bigr) 
  \bigl\vert 
  \widehat{X_t^{\varepsilon}}^k 
 \bigr\vert^2
 + \frac{\varepsilon^{2 \beta}}{\psi^2} \sum_{k > \psi \varepsilon^{-\beta}} 
k^2 \bigl\vert 
\widehat{X_t^{\varepsilon}}^k \bigr\vert^2
\biggr],
\end{split}
\end{equation*} 
and similarly for
\eqref{eq:ad:4}. Very briefly, the second term right above can be treated by substituting 
$\zeta$ for $a$ in the fifth step (see in particular 
\eqref{eq:condition:all}), 
which is always possible by choosing 
$\zeta = \vartheta^{-\epsilon}$ for
$\epsilon >0$ as small as needed
(this is very easy to check as $a$ should be thought 
as $O(\vartheta^0)$: we are just increasing the exponent 
by $\epsilon$, which is always possible because all the inequalities 
involving the various exponents in 
\eqref{eq:condition:all} are strict, thus leaving for some room). 

As for the first term, one may follow
\eqref{eq:condition:7}. This puts the additional constraints: 
\begin{equation*} 
\begin{split}
&\frac{C \rho}{\zeta^2} \leq {\sigma},
\quad 
\frac{C \rho}{\zeta^2} {\vartheta_1} \leq {\sigma}   \vartheta_2,
\\
&\frac{C}{\zeta^2}  
\gamma^{2 \delta}
{ \vartheta_1}
 \leq 
 \bigl( 1 - e^{-\gamma^{1-2 \delta}} \bigr), 
 \quad  
  \frac{C {  \vartheta_1}}{\zeta^2}     \leq  \psi^2  \bigl( 1 - e^{-\gamma \psi^2} \bigr). 
 \end{split}
\end{equation*} 
The first line does not raise any difficulties since $\sigma$ 
can be chosen as large as possible (this is exactly the principle
of the {argument} in the fifth step of the proof of Proposition \ref{eq:upper:bound}). 
As for the second line, we replace $\gamma$ by $\vartheta^{\chi}$. One obtains 
the condition {$1+4 \chi \delta > \chi + 2 \epsilon$}, which is 
{satisfied under the condition $\delta > 1/4$. This is compatible with our conditions since $ \delta <1/2$ (as before, we have enough `room' to do this).} 
As for the last one, 
one obtains the condition $-2\varpi < 1  + 2\epsilon$, which is also compatible with our conditions. 

In the end, there is no longer any restriction on the choice of $a$ in the proof of the upper bound and $\sigma$ can be chosen independently of $a$. Moreover, since we no longer compare 
\eqref{eq:ad:2}
or \eqref{eq:ad:4}
with 
\eqref{eq:ad:6}, there is no longer need to choose $\beta=2$. In turn, we can choose $\alpha=\beta<2$ and there is no longer any restriction on the choice 
of $a$ in the proof of the lower bound. Hence, we recover the standard inequality 
\begin{equation*} 
\exp \bigl( \frac{a^2}{C \varepsilon^2} 
\bigr) \leq {\mathbb E} (\tau_\varepsilon )
\leq \exp \bigl( \frac{C a^2}{\varepsilon^2} 
\bigr). 
\end{equation*}

\end{remark}


\appendix

\printbibliography 

@article{benachourRoynetteTalayVallois1998,
title = {Nonlinear self-stabilizing processes – I Existence, invariant probability, propagation of chaos},
journal = {Stochastic Processes and their Applications},
volume = {75},
number = {2},
pages = {173-201},
year = {1998},
issn = {0304-4149},
doi = {https://doi.org/10.1016/S0304-4149(98)00018-0},
url = {https://www.sciencedirect.com/science/article/pii/S0304414998000180},
author = {S. Benachour and B. Roynette and D. Talay and P. Vallois},
abstract = {Taking an odd, non-decreasing function β, we consider the (nonlinear) stochastic differential equation (Etilde)Xt=X0+Bt−12∫0tβ∗u(s,Xs)ds,t⩾0,P(Xt∈dx)=u(t,dx),t>0,and we prove the existence and uniqueness of solution of Eq. (E~), where β∗u(s,x)=∫Rβ(x−y)u(s,dy) and (Bt;t⩾0) is a one-dimensional Brownian motion, B0=0. We show that Eq. (E~)admits a stationary probability measure and investigate the link between Eq. (E~)and the associated system of particles.}
}

@book {Tsybakov,
    AUTHOR = {Tsybakov, Alexandre B.},
     TITLE = {Introduction to nonparametric estimation},
    SERIES = {Springer Series in Statistics},
      NOTE = {Revised and extended from the 2004 French original,
              Translated by Vladimir Zaiats},
 PUBLISHER = {Springer, New York},
      YEAR = {2009},
     PAGES = {xii+214},
      ISBN = {978-0-387-79051-0},
   MRCLASS = {62-01 (62G05 62G07 62G08 62G20)},
  MRNUMBER = {2724359},
       DOI = {10.1007/b13794},
       URL = {https://doi.org/10.1007/b13794},
}

@Article{butkovsky2014ergPropNLMCandMKV,
  author   = {Butkovsky, O. A.},
  title    = {On Ergodic Properties of Nonlinear Markov Chains and Stochastic McKean--Vlasov Equations},
  doi      = {10.1137/S0040585X97986825},
  eprint   = {https://doi.org/10.1137/S0040585X97986825},
  number   = {4},
  pages    = {661-674},
  url      = {https://doi.org/10.1137/S0040585X97986825},
  volume   = {58},
  abstract = {We study ergodic properties of nonlinear Markov chains and stochastic McKean--Vlasov equations. For nonlinear Markov chains we obtain sufficient conditions for existence and uniqueness of an invariant measure and uniform ergodicity. We also prove optimality of these conditions. For stochastic McKean--Vlasov equations we establish exponential convergence of their solutions to stationarity in the total variation metric under Veretennikov--Khasminskii-type conditions.},
  journal  = {Theory of Probability \& Its Applications},
  year     = {2014},
}

@article{refId0,
	author = {A. Alfonsi and B. Jourdain},
	title = {Squared quadratic Wasserstein distance: optimal couplings and Lions differentiability*},
	DOI= "10.1051/ps/2020013",
	url= "https://doi.org/10.1051/ps/2020013",
	journal = {ESAIM: PS},
	year = 2020,
	volume = 24,
	pages = "703-717",
}

@article{maillet2023note,
      title={A note on the Long-Time behaviour of Stochastic McKean-Vlasov Equations with common noise}, 
      author={Raphael Maillet},
      year={2023},
      eprint={2306.16130},
      journal={arXiv},
      primaryClass={math.AP}
}

@book {MR1121940,
    AUTHOR = {Karatzas, Ioannis and Shreve, Steven E.},
     TITLE = {Brownian motion and stochastic calculus},
    SERIES = {Graduate Texts in Mathematics},
    VOLUME = {113},
   EDITION = {Second},
 PUBLISHER = {Springer-Verlag, New York},
      YEAR = {1991},
     PAGES = {xxiv+470},
      ISBN = {0-387-97655-8},
   MRCLASS = {60J65 (35K99 35R60 60G44 60H10 60J60)},
  MRNUMBER = {1121940},
       DOI = {10.1007/978-1-4612-0949-2},
       URL = {https://doi.org/10.1007/978-1-4612-0949-2},
}

@book {Ambrosio,
    AUTHOR = {Ambrosio, Luigi and Gigli, Nicola and Savar\'{e}, Giuseppe},
     TITLE = {Gradient flows in metric spaces and in the space of
              probability measures},
    SERIES = {Lectures in Mathematics ETH Z\"{u}rich},
 PUBLISHER = {Birkh\"{a}user Verlag, Basel},
      YEAR = {2005},
     PAGES = {viii+333},
      ISBN = {978-3-7643-2428-5; 3-7643-2428-7},
   MRCLASS = {49-02 (28A33 35K55 35K90 49Q20 60B10)},
  MRNUMBER = {2129498},
}

@article{COGHI20211,
title = {Rough nonlocal diffusions},
journal = {Stochastic Processes and their Applications},
volume = {141},
pages = {1-56},
year = {2021},
issn = {0304-4149},
doi = {https://doi.org/10.1016/j.spa.2021.07.002},
url = {https://www.sciencedirect.com/science/article/pii/S0304414921001083},
author = {Michele Coghi and Torstein Nilssen},
keywords = {Rough paths, Stochastic PDEs, McKean–Vlasov, Non-local equations},
abstract = {We consider a nonlinear Fokker–Planck equation driven by a deterministic rough path which describes the conditional probability of a McKean–Vlasov diffusion with “common” noise. To study the equation we build a self-contained framework of non-linear rough integration theory which we use to study McKean–Vlasov equations perturbed by rough paths. We construct an appropriate notion of solution of the corresponding Fokker–Planck equation and prove well-posedness.}
}

@article {Stannat,
    AUTHOR = {Stannat, Wilhelm},
     TITLE = {Long-time behaviour and regularity properties of transition
              semigroups of {F}leming-{V}iot processes},
   JOURNAL = {Probab. Theory Related Fields},
  FJOURNAL = {Probability Theory and Related Fields},
    VOLUME = {122},
      YEAR = {2002},
    NUMBER = {3},
     PAGES = {431--469},
      ISSN = {0178-8051},
   MRCLASS = {60J35 (47D07 60G57 60K35 92D10)},
  MRNUMBER = {1892853},
MRREVIEWER = {Kyle Siegrist},
       DOI = {10.1007/s004400100166},
       URL = {https://doi.org/10.1007/s004400100166},
}

@book {CarmonaDelarueI,
    AUTHOR = {Carmona, Ren\'{e} and Delarue, Fran\c{c}ois},
     TITLE = {Probabilistic theory of mean field games with applications.
              {I}},
    SERIES = {Probability Theory and Stochastic Modelling},
    VOLUME = {83},
      NOTE = {Mean field FBSDEs, control, and games},
 PUBLISHER = {Springer, Cham},
      YEAR = {2018},
     PAGES = {xxv+713},
      ISBN = {978-3-319-56437-1; 978-3-319-58920-6},
   MRCLASS = {60-02 (35R60 49N70 49N90 60H15 60H30 91A15 93E20)},
  MRNUMBER = {3752669},
MRREVIEWER = {Vassili N. Kolokol\cprime tsov},
}

@book {CarmonaDelarueII,
    AUTHOR = {Carmona, Ren\'{e} and Delarue, Fran\c{c}ois},
     TITLE = {Probabilistic theory of mean field games with applications.
              {II}},
    SERIES = {Probability Theory and Stochastic Modelling},
    VOLUME = {84},
      NOTE = {Mean field games with common noise and master equations},
 PUBLISHER = {Springer, Cham},
      YEAR = {2018},
     PAGES = {xxiv+697},
      ISBN = {978-3-319-56435-7; 978-3-319-56436-4},
   MRCLASS = {60-02 (35R60 49L20 60G55 60H10 60H30 91A13 91A15)},
  MRNUMBER = {3753660},
MRREVIEWER = {Vassili N. Kolokol\cprime tsov},
}

@book {CDLL,
    AUTHOR = {Cardaliaguet, Pierre and Delarue, Fran\c{c}ois and Lasry,
              Jean-Michel and Lions, Pierre-Louis},
     TITLE = {The master equation and the convergence problem in mean field
              games},
    SERIES = {Annals of Mathematics Studies},
    VOLUME = {201},
 PUBLISHER = {Princeton University Press, Princeton, NJ},
      YEAR = {2019},
     PAGES = {x+212},
      ISBN = {978-0-691-19071-6; 978-0-691-19070-9},
   MRCLASS = {49-02 (49N80 60H30 60K35 91A07 91A15 91A16)},
  MRNUMBER = {3967062},
MRREVIEWER = {Ren\'{e} Carmona},
       DOI = {10.2307/j.ctvckq7qf},
       URL = {https://doi.org/10.2307/j.ctvckq7qf},
}

@book {FreidlinWentzell,
    AUTHOR = {Freidlin, M. I. and Wentzell, A. D.},
     TITLE = {Random perturbations of dynamical systems},
    SERIES = {Grundlehren der mathematischen Wissenschaften [Fundamental
              Principles of Mathematical Sciences]},
    VOLUME = {260},
      NOTE = {Translated from the Russian by Joseph Sz\"{u}cs},
 PUBLISHER = {Springer-Verlag, New York},
      YEAR = {1984},
     PAGES = {viii+326},
      ISBN = {0-387-90858-7},
   MRCLASS = {60H10 (60J60 70L05)},
  MRNUMBER = {722136},
       DOI = {10.1007/978-1-4684-0176-9},
       URL = {https://doi.org/10.1007/978-1-4684-0176-9},
}

@article {Brenier1,
    AUTHOR = {Brenier, Yann},
     TITLE = {{$L^2$} formulation of multidimensional scalar conservation
              laws},
   JOURNAL = {Arch. Ration. Mech. Anal.},
  FJOURNAL = {Archive for Rational Mechanics and Analysis},
    VOLUME = {193},
      YEAR = {2009},
    NUMBER = {1},
     PAGES = {1--19},
      ISSN = {0003-9527},
   MRCLASS = {35L65 (70H33)},
  MRNUMBER = {2506069},
MRREVIEWER = {Marta Lewicka},
       DOI = {10.1007/s00205-009-0214-0},
       URL = {https://doi.org/10.1007/s00205-009-0214-0},
}

@article {NualartPardoux,
    AUTHOR = {Nualart, D. and Pardoux, \'{E}.},
     TITLE = {White noise driven quasilinear {SPDE}s with reflection},
   JOURNAL = {Probab. Theory Related Fields},
  FJOURNAL = {Probability Theory and Related Fields},
    VOLUME = {93},
      YEAR = {1992},
    NUMBER = {1},
     PAGES = {77--89},
      ISSN = {0178-8051},
   MRCLASS = {60H15 (35R60 49J40)},
  MRNUMBER = {1172940},
MRREVIEWER = {Ralf Manthey},
       DOI = {10.1007/BF01195389},
       URL = {https://doi.org/10.1007/BF01195389},
}

@article {mccann,
    AUTHOR = {Carrillo, J.A. and McCann, R.J. and Villani, C.},
     TITLE = {Kinetic equilibration rates for granular media and related
              equations: entropy dissipation and mass transportation
              estimates},
   JOURNAL = {Rev. Mat. Iberoamericana},
  FJOURNAL = {Revista Matem\'{a}tica Iberoamericana},
    VOLUME = {19},
      YEAR = {2003},
    NUMBER = {3},
     PAGES = {971--1018},
      ISSN = {0213-2230},
   MRCLASS = {35K55 (35B40 35K65 76T25)},
  MRNUMBER = {2053570},
MRREVIEWER = {Thomas P. Witelski},
       DOI = {10.4171/RMI/376},
       URL = {https://doi.org/10.4171/RMI/376},
}

@Article{carmona2015mfgcn,
  Title                    = {Mean field games with common noise},
  Author                   = {Carmona, René and Delarue, François and Lacker, Daniel},
  Journal                  = {The Annals of Probability},
  Year                     = {2016},

  Doi                      = {10.1214/15-AOP1060}
}

@Article{dawsonVaillancourt1995,    
AUTHOR = {Dawson, Donald and Vaillancourt, Jean},
     TITLE = {Stochastic {M}c{K}ean-{V}lasov equations},
   JOURNAL = {NoDEA Nonlinear Differential Equations Appl.},
  FJOURNAL = {NoDEA. Nonlinear Differential Equations and Applications},
    VOLUME = {2},
      YEAR = {1995},
    NUMBER = {2},
     PAGES = {199--229},
      ISSN = {1021-9722},
   MRCLASS = {60H99},
  MRNUMBER = {1328577},
MRREVIEWER = {Marc Brunaud},
       DOI = {10.1007/BF01295311},
       URL = {https://doi.org/10.1007/BF01295311},
}

@article{ANGELI2023127301,
title = {Well-posedness and stationary solutions of McKean-Vlasov (S)PDEs},
journal = {Journal of Mathematical Analysis and Applications},
volume = {526},
number = {2},
pages = {127301},
year = {2023},
issn = {0022-247X},
doi = {https://doi.org/10.1016/j.jmaa.2023.127301},
url = {https://www.sciencedirect.com/science/article/pii/S0022247X23003049},
author = {L. Angeli and J. Barré and M. Kolodziejczyk and M. Ottobre},
keywords = {McKean Vlasov PDE, Stochastic McKean Vlasov equation, Stochastic partial differential equations, Ergodic theory for SPDEs, Stationary solutions of PDEs},
abstract = {This paper is composed of two parts. In the first part we consider McKean-Vlasov Partial Differential Equations (PDEs), obtained as thermodynamic limits of interacting particle systems (i.e. in the limit N→∞, where N is the number of particles). It is well-known that, even when the particle system has a unique invariant measure (stationary solution), the limiting PDE very often displays a phase transition: for certain choices of (coefficients and) parameter values, the PDE has a unique stationary solution, but as the value of the parameter varies multiple stationary states appear. In the first part of this paper, we add to this stream of literature and consider a specific instance of a McKean-Vlasov type equation, namely the Kuramoto model on the torus perturbed by a symmetric double-well potential, and show that this PDE undergoes the type of phase transition just described, as the diffusion coefficient is varied. In the second part of the paper, we consider a rather general class of McKean-Vlasov PDEs on the torus (which includes both the original Kuramoto model and the Kuramoto model in double well potential of part one) perturbed by (strong enough) infinite-dimensional additive noise. To the best of our knowledge, the resulting Stochastic PDE, which we refer to as the Stochastic McKean-Vlasov equation, has not been studied before, so we first study its well-posedness. We then show that the addition of noise to the PDE has the effect of restoring uniqueness of the stationary state in the sense that, irrespective of the choice of coefficients and parameter values in the McKean-Vlasov PDE, the Stochastic McKean-Vlasov PDE always admits at most one invariant measure.}
}

@Article{hammersleySiskaSzpruch2021lyapunov,
  author    = {William R. P. Hammersley and David Šiška and Łukasz Szpruch},
  title     = {{McKean–Vlasov SDEs under measure dependent Lyapunov conditions}},
  journal   = {Annales de l'Institut Henri Poincaré, Probabilités et Statistiques},
  year      = {2021},
  volume    = {57},
  number    = {2},
  pages     = {1032 -- 1057},
  doi       = {10.1214/20-AIHP1106},
  url       = {https://doi.org/10.1214/20-AIHP1106},
  keywords  = {McKean–Vlasov equations, mean-field equations, Wasserstein calculus},
  publisher = {Institut Henri Poincaré},
}

@Article{kolokoltsovTroeva2019mfgcn,
  author    = {Vassili N. Kolokoltsov and Marianna Troeva},
  title     = {On mean field games with common noise and {M}c{K}ean-{V}lasov {SPDE}s},
  doi       = {10.1080/07362994.2019.1592690},
  eprint    = {https://doi.org/10.1080/07362994.2019.1592690},
  number    = {4},
  pages     = {522-549},
  url       = {https://doi.org/10.1080/07362994.2019.1592690},
  volume    = {37},
  journal   = {Stochastic Analysis and Applications},
  owner     = {s1547418},
  publisher = {Taylor \& Francis},
  timestamp = {2019.07.18},
  year      = {2019},
}

@Article{ledgerSojmark2021,
  author  = {S. J. Ledger and Andreas {S{\o}jmark}},
  title   = {At the {M}ercy of the {C}ommon {N}oise: {B}low-ups in a {C}onditional {M}c{K}ean--{V}lasov {P}roblem},
  pages   = {1-39},
  volume  = {26},
  journal = {Electronic Journal of Probability},
  year    = {2021},
}

@article{LionsVideo,
	author = {Lions, Pierre-Louis},
	date-added = {2021-06-14 18:28:13 +0200},
	date-modified = {2021-06-14 18:32:34 +0200},
	title = {Cours du Coll{\`e}ge de France},
	journal = {https://www.college-de-france.fr/site/pierre-louis-lions/},
	year = {2006},
	Bdsk-Url-1 = {https://www.college-de-france.fr/site/pierre-louis-lions/_course.htm}
}

@article {DelloSchiavo0,
    AUTHOR = {Dello Schiavo, Lorenzo},
     TITLE = {The {D}irichlet-{F}erguson diffusion on the space of
              probability measures over a closed {R}iemannian manifold},
   JOURNAL = {Ann. Probab.},
  FJOURNAL = {The Annals of Probability},
    VOLUME = {50},
      YEAR = {2022},
    NUMBER = {2},
     PAGES = {591--648},
      ISSN = {0091-1798},
   MRCLASS = {60J60 (47D07 60G57 60J46)},
  MRNUMBER = {4399159},
       DOI = {10.1214/21-aop1541},
       URL = {https://doi.org/10.1214/21-aop1541},
}

@article {KonarovskyivRenesse,
    AUTHOR = {Konarovskyi, Vitalii and von Renesse, Max-K.},
     TITLE = {Modified massive {A}rratia flow and {W}asserstein diffusion},
   JOURNAL = {Comm. Pure Appl. Math.},
  FJOURNAL = {Communications on Pure and Applied Mathematics},
    VOLUME = {72},
      YEAR = {2019},
    NUMBER = {4},
     PAGES = {764--800},
      ISSN = {0010-3640},
   MRCLASS = {60K35 (58J65 60G57 60J60)},
  MRNUMBER = {3914882},
       DOI = {10.1002/cpa.21758},
       URL = {https://doi.org/10.1002/cpa.21758},
}

@article {vRenesseSturm,
    AUTHOR = {von Renesse, Max-K. and Sturm, Karl-Theodor},
     TITLE = {Entropic measure and {W}asserstein diffusion},
   JOURNAL = {Ann. Probab.},
  FJOURNAL = {The Annals of Probability},
    VOLUME = {37},
      YEAR = {2009},
    NUMBER = {3},
     PAGES = {1114--1191},
      ISSN = {0091-1798},
   MRCLASS = {60G57 (35R60 47D07 58J65 60J60)},
  MRNUMBER = {2537551},
MRREVIEWER = {Ingemar Kaj},
       DOI = {10.1214/08-AOP430},
       URL = {https://doi.org/10.1214/08-AOP430},
}

@article{duongTugaut2016stationaryMKVexistCharPhaseTrans,
title = {Stationary solutions of the Vlasov–Fokker–Planck equation: Existence, characterization and phase-transition},
journal = {Applied Mathematics Letters},
volume = {52},
pages = {38-45},
year = {2016},
issn = {0893-9659},
doi = {https://doi.org/10.1016/j.aml.2015.08.003},
url = {https://www.sciencedirect.com/science/article/pii/S0893965915002414},
author = {M.H. Duong and J. Tugaut},
keywords = {Invariant measure, Vlasov–Fokker–Planck equation, McKean–Vlasov equation, Stochastic processes},
abstract = {In this paper, we study the set of stationary solutions of the Vlasov–Fokker–Planck (VFP) equation. This equation describes the time evolution of the probability distribution of a particle moving under the influence of a double-well potential, an interaction potential, a friction force and a stochastic force. We prove, under suitable assumptions, that the VFP equation does not have a unique stationary solution and that there exists a phase transition. Our study relies on the recent results by Tugaut and coauthors regarding the McKean–Vlasov equation.}
}

@article{dawson1983CriticalDA,
  title={Critical dynamics and fluctuations for a mean-field model of cooperative behavior},
  author={Donald A. Dawson},
  journal={Journal of Statistical Physics},
  year={1983},
  volume={31},
  pages={29-85},
  url={https://api.semanticscholar.org/CorpusID:120936190}
}

@article{guillinMonmarche2020UniformLongTimeANDpoCforMKVkineticNon-convex,
  title={Uniform Long-Time and Propagation of Chaos Estimates for Mean Field Kinetic Particles in Non-convex Landscapes},
  author={Arnaud Guillin and Pierre Monmarch'e},
  journal={Journal of Statistical Physics},
  year={2020},
  volume={185},
  url={https://api.semanticscholar.org/CorpusID:211678298}
}

@article{doringStannat2009logSobolevForWassDiff,
author = {Döring, Maik and Stannat, Wilhelm},
year = {2009},
month = {09},
pages = {189-209},
title = {The logarithmic Sobolev inequality for the Wasserstein diffusion},
volume = {145},
journal = {Probability Theory and Related Fields},
doi = {10.1007/s00440-008-0166-6}
}

@article{stannat2000onValidityLogSobSymFlemViot,
author = {Wilhelm Stannat},
title = {{On the validity of the log-Sobolev inequality for symmetric Fleming-Viot operators}},
volume = {28},
journal = {The Annals of Probability},
number = {2},
publisher = {Institute of Mathematical Statistics},
pages = {667 -- 684},
keywords = {Fleming –Viot operators, hypercontractivity, logarithmic, Sobolev inequalities},
year = {2000},
doi = {10.1214/aop/1019160256},
URL = {https://doi.org/10.1214/aop/1019160256}
}

@article {JKO,
    AUTHOR = {Jordan, Richard and Kinderlehrer, David and Otto, Felix},
     TITLE = {The variational formulation of the {F}okker-{P}lanck equation},
   JOURNAL = {SIAM J. Math. Anal.},
  FJOURNAL = {SIAM Journal on Mathematical Analysis},
    VOLUME = {29},
      YEAR = {1998},
    NUMBER = {1},
     PAGES = {1--17},
      ISSN = {0036-1410},
   MRCLASS = {35Q99 (35A15 49J99 60J60 82C31)},
  MRNUMBER = {1617171},
MRREVIEWER = {Thierry Goudon},
       DOI = {10.1137/S0036141096303359},
       URL = {https://doi.org/10.1137/S0036141096303359},
}

@article {Otto1,
    AUTHOR = {Otto, Felix},
     TITLE = {Evolution of microstructure in unstable porous media flow: a
              relaxational approach},
   JOURNAL = {Comm. Pure Appl. Math.},
  FJOURNAL = {Communications on Pure and Applied Mathematics},
    VOLUME = {52},
      YEAR = {1999},
    NUMBER = {7},
     PAGES = {873--915},
      ISSN = {0010-3640},
   MRCLASS = {76S05 (35Q35 49Q20 76T99)},
  MRNUMBER = {1682800},
MRREVIEWER = {Michel Chipot},
       DOI = {10.1002/(SICI)1097-0312(199907)52:7<873::AID-CPA5>3.3.CO;2-K},
       URL =
              {https://doi.org/10.1002/(SICI)1097-0312(199907)52:7<873::AID-CPA5>3.3.CO;2-K},
}

@article {Otto2,
    AUTHOR = {Otto, Felix},
     TITLE = {The geometry of dissipative evolution equations: the porous
              medium equation},
   JOURNAL = {Comm. Partial Differential Equations},
  FJOURNAL = {Communications in Partial Differential Equations},
    VOLUME = {26},
      YEAR = {2001},
    NUMBER = {1-2},
     PAGES = {101--174},
      ISSN = {0360-5302},
   MRCLASS = {35K57 (35K65 76S05)},
  MRNUMBER = {1842429},
MRREVIEWER = {Antonio Fasano},
       DOI = {10.1081/PDE-100002243},
       URL = {https://doi.org/10.1081/PDE-100002243},
}

@Book{sznitman1991topics,
  Title                    = {Topics in propagation of chaos},
  Author                   = {Sznitman, Alain-Sol},
  Publisher                = {Springer},
  Year                     = {1991},

  Booktitle                = {Ecole d'Et{\'e} de Probabilit{\'e}s de Saint-Flour XIX---1989},
  Date-added               = {2014-11-30 14:09:47 +0000},
  Date-modified            = {2014-11-30 14:09:47 +0000},
  Pages                    = {165--251}
}

@Article{hammersleySiskaSzpruch2021cnMKVwk,
  author    = {William R. P. Hammersley and David Šiška and Łukasz Szpruch},
  title     = {{Weak existence and uniqueness for McKean–Vlasov SDEs with common noise}},
  journal   = {The Annals of Probability},
  year      = {2021},
  volume    = {49},
  number    = {2},
  pages     = {527 -- 555},
  doi       = {10.1214/20-AOP1454},
  url       = {https://doi.org/10.1214/20-AOP1454},
  keywords  = {Girsanov transformations, mean-field equations, Stochastic McKean–Vlasov equations},
  publisher = {Institute of Mathematical Statistics},
}

@Article{huRenSiskaSzpruch2021gradFlowMfLangevinNN,
  author    = {Kaitong Hu and Zhenjie Ren and David Šiška and Łukasz Szpruch},
  title     = {{Mean-field Langevin dynamics and energy landscape of neural networks}},
  journal   = {Annales de l'Institut Henri Poincaré, Probabilités et Statistiques},
  year      = {2021},
  volume    = {57},
  number    = {4},
  pages     = {2043 -- 2065},
  doi       = {10.1214/20-AIHP1140},
  url       = {https://doi.org/10.1214/20-AIHP1140},
  keywords  = {Gradient flow, Mean-field Langevin dynamics, neural networks},
  publisher = {Institut Henri Poincaré},
}

@Article{meiMontanariNguyen2018MFNN,
  author    = {Mei, Song and Montanari, Andrea and Nguyen, Phan-Minh},
  title     = {A mean field view of the landscape of two-layer neural networks},
  doi       = {10.1073/pnas.1806579115},
  eprint    = {https://www.pnas.org/content/115/33/E7665.full.pdf},
  issn      = {0027-8424},
  number    = {33},
  pages     = {E7665--E7671},
  url       = {https://www.pnas.org/content/115/33/E7665},
  volume    = {115},
  abstract  = {Multilayer neural networks have proven extremely successful in a variety of tasks, from image classification to robotics. However, the reasons for this practical success and its precise domain of applicability are unknown. Learning a neural network from data requires solving a complex optimization problem with millions of variables. This is done by stochastic gradient descent (SGD) algorithms. We study the case of two-layer networks and derive a compact description of the SGD dynamics in terms of a limiting partial differential equation. Among other consequences, this shows that SGD dynamics does not become more complex when the network size increases.Multilayer neural networks are among the most powerful models in machine learning, yet the fundamental reasons for this success defy mathematical understanding. Learning a neural network requires optimizing a nonconvex high-dimensional objective (risk function), a problem that is usually attacked using stochastic gradient descent (SGD). Does SGD converge to a global optimum of the risk or only to a local optimum? In the former case, does this happen because local minima are absent or because SGD somehow avoids them? In the latter, why do local minima reached by SGD have good generalization properties? In this paper, we consider a simple case, namely two-layer neural networks, and prove that{\textemdash}in a suitable scaling limit{\textemdash}SGD dynamics is captured by a certain nonlinear partial differential equation (PDE) that we call distributional dynamics (DD). We then consider several specific examples and show how DD can be used to prove convergence of SGD to networks with nearly ideal generalization error. This description allows for {\textquotedblleft}averaging out{\textquotedblright} some of the complexities of the landscape of neural networks and can be used to prove a general convergence result for noisy SGD.},
  journal   = {Proceedings of the National Academy of Sciences},
  publisher = {National Academy of Sciences},
  year      = {2018},
}

@Article{bogachevRocknerShaposhnikovStationary2019,
  author    = {Vladimir I. Bogachev and Michael Röckner and Stanislav V. Shaposhnikov},
  title     = {Convergence in variation of solutions of nonlinear Fokker–Planck–Kolmogorov equations to stationary measures},
  doi       = {https://doi.org/10.1016/j.jfa.2019.03.014},
  issn      = {0022-1236},
  number    = {12},
  pages     = {3681 - 3713},
  url       = {http://www.sciencedirect.com/science/article/pii/S0022123619301041},
  volume    = {276},
  abstract  = {We study convergence in variation of probability solutions of nonlinear Fokker–Planck–Kolmogorov equations to stationary solutions. We obtain sufficient conditions for the exponential convergence of solutions to the stationary solution in case of coefficients that can have an arbitrary growth at infinity and depend on the solutions through convolutions with unbounded discontinuous kernels. In addition, we study a more difficult case where the nonlinear equation has several stationary solutions and convergence to a stationary solution depends on initial data. Finally, we obtain sufficient conditions for solvability of nonlinear Fokker–Planck–Kolmogorov equations.},
  journal   = {Journal of Functional Analysis},
  keywords  = {Nonlinear Fokker–Planck–Kolmogorov equation, Stationary measure, Exponential convergence},
  owner     = {s1547418},
  timestamp = {2019.07.09},
  year      = {2019},
}

@Article{vaillancourt1988,
  author    = {Jean Vaillancourt},
  title     = {On the existence of random mckean–vlasov limits for triangular arrays of exchangeable diffusions},
  number    = {4},
  pages     = {431-446},
  volume    = {6},
  journal   = {Stochastic Analysis and Applications},
  publisher = {Taylor & Francis},
  year      = {1988},
}

@Article{lions2007-8lectures,
  author       = {Pierre-Louis Lions},
  date         = {2007},
  journaltitle = {Lectures at the Coll\`ege de France.},
  title        = {Théorie des jeux \`a champs moyen et applications.},
  url          = {https://www.college-de-france.fr/site/pierre-louis-lions/_course.htm},
}

@Book{daPratoZabczyk2014stochEqnsInfDim,
  author     = {Da Prato, Giuseppe and Zabczyk, Jerzy},
  title      = {Stochastic Equations in Infinite Dimensions},
  doi        = {10.1017/CBO9781107295513},
  edition    = {2},
  publisher  = {Cambridge University Press},
  series     = {Encyclopedia of Mathematics and its Applications},
  collection = {Encyclopedia of Mathematics and its Applications},
  place      = {Cambridge},
  year       = {2014},
}

@Book{baernstein2019symmetrizationInAnalysis,
  author     = {Baernstein II, A.},
  date       = {2019},
  title      = {Symmetrization in Analysis},
  doi        = {10.1017/9781139020244},
  publisher  = {Cambridge University Press},
  series     = {New Mathematical Monographs},
  collection = {New Mathematical Monographs},
  place      = {Cambridge},
  year       = {2019},
}

@Article{zambotti2001intByParts,
  author  = {Zambotti, L.},
  title   = {A Reflected Stochastic Heat Equation as Symmetric Dynamics with Respect to the 3-d Bessel Bridge},
  pages   = {195-209},
  volume  = {180},
  journal = {Journal of Functional Analysis},
  year    = {2001},
}

@Article{delarueHammersley2022rshe,
  author       = {François Delarue and William R. P. Hammersley},
  year         = {2022, v2: 2024},
  journaltitle = {arXiv:2210.01239 (v2)},
  title        = {Rearranged Stochastic Heat Equation},
}

@Article{coghiDeuschelFrizMaurelli2020pathwiseMKVwithAddNoise,
  author       = {Michele Coghi and Jean-Dominique Deuschel and Peter K. Friz and Mario Maurelli},
  date         = {2020},
  journaltitle = {The Annals of Applied Probability},
  title        = {{Pathwise McKean–Vlasov theory with additive noise}},
  doi          = {10.1214/20-AAP1560},
  number       = {5},
  pages        = {2355 -- 2392},
  url          = {https://doi.org/10.1214/20-AAP1560},
  volume       = {30},
  keywords     = {Additive Noise, central limit theorem, jump-processes, large deviations, McKean–Vlasov, mean-field},
  publisher    = {Institute of Mathematical Statistics},
}

@Article{claisseConfortiRenWang2023mfOptRegByFisherInfo,
  author  = {Claisse, Julien and Conforti, Giovanni and Ren, Zhenjie and Wang, Songbo},
  title   = {Mean Field Optimization Problem Regularized by Fisher Information},
  journal = {arXiv:2302.05938},
  year    = {2023},
}

@InProceedings{nitandaWu2022convAnalysisMFLD,
  author    = {Nitanda, Atsushi and Wu, Denny and Suzuki, Taiji},
  title     = {Convex Analysis of the Mean Field Langevin Dynamics},
  booktitle = {Proceedings of The 25th International Conference on Artificial Intelligence and Statistics},
  year      = {2022},
  editor    = {Camps-Valls, Gustau and Ruiz, Francisco J. R. and Valera, Isabel},
  volume    = {151},
  series    = {Proceedings of Machine Learning Research},
  publisher = {PMLR},
  month     = {28--30 Mar},
  pages     = {9741--9757},
  url       = {https://proceedings.mlr.press/v151/nitanda22a.html},
  abstract  = {As an example of the nonlinear Fokker-Planck equation, the mean field Langevin dynamics recently attracts attention due to its connection to (noisy) gradient descent on infinitely wide neural networks in the mean field regime, and hence the convergence property of the dynamics is of great theoretical interest. In this work, we give a concise and self-contained convergence rate analysis of the mean field Langevin dynamics with respect to the (regularized) objective function in both continuous and discrete time settings. The key ingredient of our proof is a proximal Gibbs distribution $p_q$ associated with the dynamics, which, in combination with techniques in Vempala and Wibisono (2019), allows us to develop a simple convergence theory parallel to classical results in convex optimization. Furthermore, we reveal that $p_q$ connects to the duality gap in the empirical risk minimization setting, which enables efficient empirical evaluation of the algorithm convergence.},
  file      = {nitanda22a.pdf:https\://proceedings.mlr.press/v151/nitanda22a/nitanda22a.pdf:PDF},
}

@InProceedings{DBLP:conf/nips/ChizatB18,
  author       = {L{\'{e}}na{\"{\i}}c Chizat and
                  Francis R. Bach},
  editor       = {Samy Bengio and
                  Hanna M. Wallach and
                  Hugo Larochelle and
                  Kristen Grauman and
                  Nicol{\`{o}} Cesa{-}Bianchi and
                  Roman Garnett},
  title        = {On the Global Convergence of Gradient Descent for Over-parameterized
                  Models using Optimal Transport},
  booktitle    = {Advances in Neural Information Processing Systems 31: Annual Conference
                  on Neural Information Processing Systems 2018, NeurIPS 2018, December
                  3-8, 2018, Montr{\'{e}}al, Canada},
  pages        = {3040--3050},
  year         = {2018},
  url          = {https://proceedings.neurips.cc/paper/2018/hash/a1afc58c6ca9540d057299ec3016d726-Abstract.html},
  timestamp    = {Mon, 16 May 2022 15:41:51 +0200},
  biburl       = {https://dblp.org/rec/conf/nips/ChizatB18.bib},
  bibsource    = {dblp computer science bibliography, https://dblp.org}
}

@Article{chizat2022mfLD,
  author  = {L{\'e}na{\"i}c Chizat},
  title   = {Mean-Field Langevin Dynamics : Exponential Convergence and Annealing},
  journal = {Trans. Mach. Learn. Res.},
  year    = {2022},
  volume  = {2022},
  url     = {https://api.semanticscholar.org/CorpusID:246473134},
}

@Article{hairerMattinglyScheutzow2011harris,
  author       = {Hairer, M. and Mattingly, J.C. and Scheutzow, M.},
  date         = {2011},
  journaltitle = {Probability Theory and Related Fields},
  title        = {Asymptotic coupling and a general form of Harris’ theorem with applications to stochastic delay equations},
  number       = {149},
  pages        = {223–259},
}

@InProceedings{veretennikov2006ergMKV,
  author    = {Veretennikov, A. Yu.},
  title     = {On Ergodic Measures for McKean-Vlasov Stochastic Equations},
  booktitle = {Monte Carlo and Quasi-Monte Carlo Methods 2004},
  year      = {2006},
  editor    = {Niederreiter, Harald and Talay, Denis},
  publisher = {Springer Berlin Heidelberg},
  isbn      = {978-3-540-31186-7},
  pages     = {471--486},
  abstract  = {Conditions for existence and uniqueness of invariant measures and weak convergence to these measures for stochastic McKean-Vlasov equations have been established, along with similar approximation results and a new version of existence and uniqueness of strong solutions to these equations.},
  address   = {Berlin, Heidelberg},
}

@Article{bogachevRocknerShaposhnikov2019convStnryNLFPK,
  author   = {Bogachev, V. I. and Röckner, M. and Shaposhnikov, S. V.},
  title    = {On Convergence to Stationary Distributions for Solutions of Nonlinear Fokker–Planck–Kolmogorov Equations.},
  journal  = {Journal of Mathematical Sciences},
  year     = {2019},
  volume   = {242},
  number   = {1},
  pages    = {69 - 84},
  issn     = {10723374},
  url      = {https://search.ebscohost.com/login.aspx?direct=true&amp;db=asx&amp;AN=141476099&amp;site=eds-live},
  abstract = {We obtain conditions under which solutions to a nonlinear Fokker–Planck–Kolmogorov equation with the diffusion matrix depending on the solution converge to the stationary solution. [ABSTRACT FROM AUTHOR]},
  keywords = {NONLINEAR equations, BURGERS' equation},
}

@article{guillinLeBrisMonmarche2022convRateMKVnonCon,
author = {Arnaud Guillin and Pierre Le Bris and Pierre Monmarch{\'e}},
title = {{Convergence rates for the Vlasov-Fokker-Planck equation and uniform in time propagation of chaos in non convex cases}},
volume = {27},
journal = {Electronic Journal of Probability},
number = {none},
publisher = {Institute of Mathematical Statistics and Bernoulli Society},
pages = {1 -- 44},
keywords = {Coupling method, long-time convergence, propagation of chaos, Vlasov-Fokker-Planck equation},
year = {2022},
doi = {10.1214/22-EJP853},
URL = {https://doi.org/10.1214/22-EJP853}
}

@Article{eberleGuillinZimmer2016quantHarris4diffnsAndMKV,
  author  = {Eberle, Andreas and Guillin, Arnaud and Zimmer, Raphael},
  title   = {Quantitative Harris type theorems for diffusions and McKean-Vlasov processes},
  journal = {Transactions of the American Mathematical Society},
  year    = {2016},
  volume  = {371},
  month   = {06},
  doi     = {10.1090/tran/7576},
}

@Article{baoScheutzowYuan2022existInvMeasFctnlMKV,
  author    = {Jianhai Bao and Michael Scheutzow and Chenggui Yuan},
  title     = {{Existence of invariant probability measures for functional McKean-Vlasov SDEs}},
  journal   = {Electronic Journal of Probability},
  year      = {2022},
  volume    = {27},
  number    = {none},
  pages     = {1 -- 14},
  doi       = {10.1214/22-EJP773},
  url       = {https://doi.org/10.1214/22-EJP773},
  keywords  = {functional McKean-Vlasov SDE, invariant probability measure, Kakutani’s fixed point theorem},
  publisher = {Institute of Mathematical Statistics and Bernoulli Society},
}

@article{ahmedDing1993invMeasNLMP,
author = {Ahmed, N. and Xinhong, Ding},
year = {1993},
month = {01},
pages = {},
title = {On Invariant Measures of Nonlinear Markov Processes},
volume = {6},
journal = {Journal of Applied Mathematics and Stochastic Analysis},
doi = {10.1155/S1048953393000310}
}
\end{document}